\newcommand{\E}{\mathbb{E}}
\newcommand{\Var}{\operatorname{Var}}
\renewcommand{\P}{\mathbb{P}}
\DeclareMathOperator{\supp}{Supp}
\journalname{Statistics and Computing}
\begin{document}

\title{Conditional quantile sequential estimation for stochastic codes}
%\subtitle{Conditional quantile and stochastic codes}

\titlerunning{Conditional quantile and stochastic codes}       
 % if too long for running head

\author{T.  Labopin-Richard   \and
        F. Gamboa \and A. Garivier \and
        J. Stenger}

%\authorrunning{Short form of author list} % if too long for running head

\institute{T.  Labopin-Richard, F. Gamboa and J. Stenger \at
              Institut de math\'ematique de Toulouse, Universit\'e Paul Sabatier \\
118 Route de Narbonne\\
31062 Toulouse, France\\
\email{fabrice.gamboa@math.univ-toulouse.fr}           %  \\
%             \emph{Present address:} of F. Author  %  if needed
           \and
A. Garivier \at
Unit\'e de Math\'ematiques Pures et Appliqu\'ees, Laboratoire de l'Informatique du Parall\'elisme\\
\'Ecole Normale Sup\'erieure de Lyon, Universit\'e de Lyon\\
46, allée d'Italie, Lyon, France
\email{aurelien.garivier@ens-lyon.fr}  \\
\and 
J. Stenger \at
EDF R$\&$D\\
6 Quai Watier\\
78400 Chatou, France
\email{jerome.stenger@edf.fr}}

\date{Received: date / Accepted: date}
% The correct dates will be entered by the editor

\maketitle

\begin{abstract}
We propose and analyze an algorithm for the sequential estimation of a conditional quantile in the context of real stochastic codes with vector-valued inputs. Our algorithm is based on $k$-nearest neighbors smoothing within a Robbins-Monro estimator. We discuss the convergence of the algorithm under some conditions on the stochastic code. We provide non-asymptotic rates of convergence of the mean squared error and we discuss the tuning of the algorithm's parameters.
\keywords{Stochastic code \and Conditional quantile \and Robbins-Monro stochastic algorithm \and $k$-nearest neighbors method}
% \PACS{PACS code1 \and PACS code2 \and more}
 \subclass{62L12 \and 62L20 \and 62G32}
\end{abstract}

\section{Introduction}

Computer code experiments have encountered, in the last decades, a growing interest among statisticians in several fields (see \cite{code2}, \cite{code1,code3,oakley,jala2,bect} and references therein). In the absence of noise, a numerical black box $g:\mathbb{R}^d\to\mathbb{R}$ maps an \emph{input vector} $X$ to a numerical output $Y=g(X)\in\mathbb{R}$.
When the black box does include some randomness, the code is called \emph{stochastic} and the model is as follows: a random vector $\varepsilon\in\mathbb{R}^m$, called \emph{random seed}, models the stochasticity of the function, while $X$ is a random vector. The random seed and the input are assumed to be stochastically independent. The map $g$ (which satisfies some regularity assumption specified below)  is defined on $\mathbb{R}^d \times \mathbb{R}^m$ and outputs 
\begin{equation}
\label{box}
Y=g(X, \varepsilon)\;,
\end{equation}
hence yielding possibly different values for the same input $X$.
One observes a sample of pairs $(X, Y)$, without having access to the details of $g$. In the context of computer experiments, those observations are often expensive (for example when $g$ has a high computational complexity) and one aims at learning rapidly some properties of interest on $g$.

We focus in this work on the estimation of the conditional quantile of the output $Y$ given the input $X$. For a given level $\alpha \in [1/2, 1)$ and for every possible input $x\in\mathbb{R}^d$, the target is
\[\theta^*(x):=q_{\alpha} \big(  g(x, \varepsilon) \big) \,,\quad x \in \mathbb{R}^d \;,\]
where $q_{\alpha}(Z) :=  F_Z^{-1}(\alpha)$ is the quantile of level $\alpha$ of the random variable $Z$ and $F_Z^{-1}(u):= \inf \{x : F_Z(x) \geq u \}$ is the generalized inverse of the cumulative distribution function of $Z$. Notice that we restrict $\alpha \in [1/2, 1)$ as the case $\alpha \in (0, 1/2]$ can be tackled in the same way considering $-Z$. Our goal is to estimate the conditional quantile for different values of $x$ at the same time. 

\subsection*{The algorithm}

For a fixed value of $x$, there are several well-known procedures to estimate the quantile $\theta^*(x)$.
Given a sample $(Y_i^x)_{i=1 \dots n}$ of $Y^x:=g(x, \varepsilon)$, the empirical quantile is a solution. For a sequential estimation, one may use a Robbins Monro~\cite{rob} estimator. This method permits to iteratively approximate the zero of a function $h: \mathbb{R} \to \mathbb{R}$ by a sequence of estimators defined by induction: $\theta_0\in\mathbb{R}^d$ and for all $n\geq 0$,
\[\theta_{n+1} =  \theta_n - \gamma_{n+1} H(\theta_n, Z_{n+1})\; .\]
Here, $(\gamma_n)$ is the learning rate (a deterministic step-size sequence), $(Z_n)$ is an i.i.d sample of observations, and $H$ is a noisy version of $h$. Denoting $\mathcal{F}_n:= \sigma (Z_1, \dots Z_n)$ the sigma-field induced by the observations, $H$ is such that \[\E\big(H(\theta_n, Z_{n+1})| \mathcal{F}_n\big) = h(\theta_n) \;.\]
Classical conditions for the the choice of the step sizes $(\gamma_n)$ are
$$\displaystyle\sum_{n} \gamma_n^2 < \infty, \text{ and } \displaystyle\sum_{n} \gamma_n= \infty\;.$$ 
These conditions ensure the convergence of the estimates under weak assumptions. For example, convergence in mean squared is studied in~\cite{rob}, almost sure consistency is considered in~\cite{blum,schrek}, asymptotic rate of convergence are given in~\cite{fab,rup,sac}, while large deviations principles are investigated in~\cite{woodroofe}.
There has been a recent interest on non-asymptotic results. Risk bounds under Gaussian concentration assumption (see~\cite{meno}) and finite time bounds on the mean squared error under strong convexity assumptions (see~\cite{moul,schrek} and references therein), have been given.
Quantile estimation corresponds to the choice $h:t \mapsto F(t)-\alpha$, where $F$ is the cumulative distribution function of the target distribution. One can show that the estimator 
\begin{equation}
\label{algoquantile}
\left \lbrace
\begin{aligned}
&\theta_0   \in \mathbb{R} \\
&\theta_{n+1} =  \theta_n - \gamma_{n+1} \left( \mathbb{1}_{Z_{n+1} \leq \theta_{n}} - \alpha \right) \\
\end{aligned}
\right. 
\end{equation}
is consistent and asymptotically Gaussian (see~\cite{Duflo} chapters 1 and 2 for proofs and details). It is important to remind, however, that the lack of strong convexity prevents most non-asymptotic results to be applied directly, except when the density is lower-bounded. We nevertheless mention that Godichon et al. prove in \cite{cenac,godichon} such non-asymptotic results for the adaptation of algorithm (\ref{algoquantile}) to the case where $Z$ is a random variable on an Hilbert space of dimension higher than 2.

Of course, unless $x$ can take a small number of different values, it is not possible to use this algorithm with a sample of $Y^x$ for each possible input value $x$. 
Even more, when the code has a high computational complexity, the overall number of observations (all values of $x$ included) must remain small, and we need an algorithm using only one limited sample $(X_i, Y_i)_{i=1 \dots n}$ of $(X, Y)$. 
Then, the problem is more difficult. For each value of~$x$, we need to estimate quantile of the conditional distribution given $x$ using a \emph{biased} sample. To address this issue, we propose to embed Algorithm~\eqref{algoquantile} into a non-parametric estimation procedure. For a fixed input $x$, the new algorithm only takes into account the pairs $(X_i,Y_i)$ for which the input $X_i$ is close to $x$, and thus (presumably) the law of $Y_i$ close to that of $Y^x$. To set up this idea, we use the $k$-nearest neighbors method, introducing the sequential estimator:

\begin{equation}
\label{algovoisin}
\left \lbrace
\begin{aligned}
%\label{algogo}
&\theta_0(x)   \in \mathbb{R} \\
&\theta_{n+1}(x) =  \theta_n(x) - \gamma_{n+1}   \left(\mathbb{1}_{Y_{n+1} \leq \theta_n(x)} - \alpha\right)\mathbb{1}_{X_{n+1} \in kNN_{n+1}(x)} \;,
\end{aligned}
\right.
\end{equation}
where

\begin{itemize}

\item[$\bullet$] $kNN_n(x)$ is the subset of $\{X_1,\dots,X_n\}$ made of the $k_n$ nearest neighbors of $x$ for the euclidean norm on $\mathbb{R}^d$. Denoting by $||X-x||_{(i,n)}$ the $i$-th statistic order of a sample $\big(||X_i-x||\big)_{i=1 \dots n}$ of size $n$, we have

$$\big\{X_{n+1} \in kNN_{n+1}(x) \big\}= \big\{ ||X_{n+1}-x|| \leq ||X-x||_{(k_{n+1}, n)}\big\}\; .$$

In this work, we discuss choices of the form $k_n= \lfloor n^{\beta} \rfloor$ for $0<\beta<1$, $n \in \mathbb{N}^*$.

\item[$\bullet$] $(\gamma_n)$ is the deterministic steps sequence. We focus here on the choice $\gamma_n= n^{-\gamma}$ with $0<\gamma \leq 1$. 
\end{itemize}

The $k$-nearest neighbors method of localization first appears in \cite{sto1,sto2} for the estimation of conditional expectations. In \cite{bata2}, Bhattacharya et al. apply it to the (non-recursive) estimation of the conditional quantile function for real-valued inputs.
Regarding the computational cost of the algorithm (\ref{algovoisin}), %first observe that for $d>1$ the computation of $kNN_{n}(\cdot)$ is easier if a grid of $G>0$  points on $\mathbb{R}^d$ is used. Then, the main complexity cost comes from the update of the function $kNN_{n}(\cdot)$ on this $G$ grid points. For each of these points, we have at hand the $k_{n+1}$ sorted nearest neighbors and their distances to the given grid point.  Roughly
naive implementations of the search for nearest neighbors require $O(n)$ operations at round $n$, which means that the overall complexity is quadratic. However, the smart use of quad-trees (a hierarchical partition of space) permits to reduce the cost of an iteration to $O(\log(n))$, and in practice the algorithm has almost a linear complexity.

Remark that if the number of neighbors $k_n$ is small, then few observations are used and the estimation is highly noisy; on the other hand, if $k_n$ is large, then values of $Y_i$ may be used that have a distribution significantly different from the target. The challenge is thus to tune $k_n$ so as to reach an optimal balance between bias and variance.

In this work, this tuning is combined with the choice of the learning rate. The main objective of this work is to optimize the choice of the two parameters $\beta$ and $\gamma$ of Algorithm~\eqref{algovoisin} that monitor the learning rate $\gamma_n$ and the number of neighbors $k_n$.  The paper is organized as follows: Section~\ref{sec:main} deals with the stability, and with the almost sure convergence of the algorithm. Furthermore, it contains the main result of our paper: a non-asymptotic inequality on the mean squared error from which  an optimal choice of parameters is derived. In Section~\ref{sec:simu}, we present some numerical simulations to illustrate our results. The technical points of the proofs are deferred to Appendix~\ref{sec:proofs}, while Appendix~\ref{Apx2} summarizes the notation and constants used in this paper. 

\section{Main results}\label{sec:main}
After giving some notation and technical assumptions, we explain in this section how to tune the parameters of the algorithm. We also provide conditions allowing theoretical guarantees of convergence.

\subsection{Notation}
%In this paper, there are lots of constants. For sake of simplicity, we will split them into three types.
The constants appearing in the sequel are of three different types:
\begin{itemize}
\item[1)] $(L, U)$ denote lower- and upper bounds for the support of random variables. They are indexed by the names of those variables;
\item[2)] $(N_i)_{i \in \mathbb{N}^*}$ are integers denoting the first ranks after which some properties hold;
\item[3)] $(C_i)_{i \in \mathbb{N}^*}$ are positive real numbers used for other purposes.
\end{itemize}
Without further precision, constants of type 2) and 3) only depend on the model, that is, on $g$ and on the distribution of $(\varepsilon, X)$. Further, we denote by $C_i(u)$ or $N_i(u)$, $u \in \mathcal{P}(\{\alpha, x, d\})$ (the power set of a $\{\alpha, x, d\}$), constants depending on the model, on the probability level $\alpha$, on the point $x$ and on the dimension $d$. The values of all the constants are summarized in Appendix~\ref{Apx2}.

\vspace{0.3cm}

%In the sequel, we denote by $Y^x$ a random variable with distribution $g(x, \varepsilon)$.
For any random variable $Z$, we denote by $F_Z$ its cumulative distribution function. 
We denote by $\mathcal{B}_x$ the set of the balls of $\mathbb{R}^d$ centred at $x$. For $B \in \mathcal{B}_x$, we denote by $r_B$ its radius and for $r_B>0$, we call $Y^B$ a random variable with distribution $\mathcal{L}(Y | X \in B)$.

\begin{remark} If the pair $(X, Y)$ has a density $f_{(X, Y)}$ with respect to Lebesgue measure and if the marginal density $f_X(x)$ is positive, then the density of $\mathcal{L}(Y | X=x)$ is
\[f_{Y |X=x}= \frac{f_{(X, Y)}(x, .)}{f_X(x)}\;,\]
and when $B=\{x\}$,  $$Y^B \overset{\mathcal{L}}{=} Y^x =  g(x, \varepsilon) \sim \mathcal{L}(Y| X=x)\;.$$
\end{remark}

\subsection{Almost sure convergence}

In order to prove the convergence of our algorithm, we make two assumptions. The first one, a continuity assumption on the code, can hardly be avoided for our $k$-nearest neighbors to be valid. The second one is convenient for the simplicity of the analysis.
\vspace{0.3cm}

\textbf{Assumption A1} For all $x$ in the support of $X$ (that we will denote $\supp(X)$ in the sequel), there exists a constant $M(x)$ such that the following inequality holds :

\[\forall B \in \mathcal{B}_x, \, \, \forall t \in \mathbb{R}, \, \, |F_{Y^B}(t) - F_{Y^x}(t)| \leq M(x) \,r_B\;.\]
In words, we assume that the stochastic code is sufficiently smooth. The law of two responses corresponding to two different but close inputs are not completely different. The assumption is clearly required, since we want to approximate the law $\mathcal{L}\left(Y|X=x\right)$ by the law $\mathcal{L}(Y| X \in kNN_n(x))$.

\begin{remark}
If we consider random vector supported by $\mathbb{R}^d \times \mathbb{R}$, we can show that Assumption \textbf{A1} holds, for example, as soon as $(X, Y)$ has a regular density with respect to Lebesgue measure. In all cases, it is easier to prove this assumption when the couple $(X, Y)$ has a density: see Subsection 3.1 for an example. 
\end{remark}
\vspace{0.3cm}

\textbf{Assumption A2} The law of $X$ has a density with respect to Lebesgue measure, and this density is lower-bounded by a constant $C_{input}>0$ on $\supp(X)$.

\vspace{0.3cm}

\noindent This hypothesis implies in particular that the law of $X$ has a compact support of volume at most $\frac{1}{C_{input}}$. This kind of assumptions is usual in $k$-nearest neighbors context (see for example \cite{gadat}). 
The following theorem studies the almost sure convergence of our algorithm.

\vspace{0.3cm}

\begin{theorem}
\label{cvas}
Let $x$ and $\alpha$ be fixed. Under Assumptions $\textbf{A1}$ and $\textbf{A2}$, Algorithm (\ref{algovoisin}) is almost surely convergent whenever $\frac{1}{2}< \gamma \leq \beta < 1$.
\end{theorem}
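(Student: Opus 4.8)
The plan is to read \eqref{algovoisin} as a perturbed Robbins--Monro recursion and to run a Lyapunov/Robbins--Siegmund argument on $V_n:=(\theta_n(x)-\theta^*(x))^2$. Set $\mathcal{F}_n:=\sigma((X_i,Y_i),\,i\le n)$. The only genuine novelty compared with the one-dimensional quantile algorithm \eqref{algoquantile} is the factor $\mathbf{1}_{X_{n+1}\in kNN_{n+1}(x)}$, which a priori depends on the whole sample $X_1,\dots,X_{n+1}$ and so breaks the martingale structure. The first step is therefore to condition on $\mathcal{F}_n$ and to observe that, the pair $(X_{n+1},Y_{n+1})$ being independent of $\mathcal{F}_n$, the event $\{X_{n+1}\in kNN_{n+1}(x)\}$ reduces to $\{X_{n+1}\in B_n\}$, where $B_n\in\mathcal{B}_x$ is the $\mathcal{F}_n$-measurable ball of radius $R_n:=\|X-x\|_{(k_{n+1},n)}$. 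Writing $p_n:=\P(X_{n+1}\in B_n\mid\mathcal{F}_n)$, the conditional mean of the increment is $p_n\big(F_{Y^{B_n}}(\theta_n(x))-\alpha\big)$ and the remainder is a bona fide $\mathcal{F}_{n+1}$-martingale difference with conditional second moment $O(p_n)$.

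Next I would isolate the ideal drift. By \textbf{A1}, $|F_{Y^{B_n}}(t)-F_{Y^x}(t)|\le M(x)R_n$ uniformly in $t$, so the conditional drift splits as $p_n(F_{Y^x}(\theta_n(x))-\alpha)+p_n e_n$ with $|e_n|\le M(x)R_n$. The first part is the gradient of the right Lyapunov function: by monotonicity of $F_{Y^x}$ and the definition of the quantile, $(\theta_n(x)-\theta^*(x))(F_{Y^x}(\theta_n(x))-\alpha)\ge 0$, vanishing only at $\theta^*(x)$. Expanding $V_{n+1}$ and taking conditional expectation yields, schematically, $\E[V_{n+1}\mid\mathcal{F}_n]\le V_n-2\gamma_{n+1}p_n(\theta_n(x)-\theta^*(x))(F_{Y^x}(\theta_n(x))-\alpha)+2\gamma_{n+1}p_n M(x)R_n|\theta_n(x)-\theta^*(x)|+C\gamma_{n+1}^2 p_n$, the three correction terms being respectively the descent term, the localization bias and the martingale-noise term.

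The three hypotheses then play complementary roles and I would make them explicit: $\gamma>1/2$ gives $\sum_n\gamma_n^2<\infty$, which (since $p_n\le1$) makes the noise term summable; $\gamma\le\beta$ is exactly the condition $\sum_n\gamma_n p_n=\infty$ (as $p_n\asymp k_{n+1}/n\asymp n^{\beta-1}$), i.e.\ the persistent-excitation requirement ensuring the descent term cannot be summable unless $\theta_n(x)\to\theta^*(x)$; and $\beta<1$ forces $k_n/n\to0$, hence $R_n\to0$ and a vanishing bias. Feeding these into a Robbins--Siegmund lemma gives that $V_n$ converges almost surely and that $\sum_n\gamma_n p_n(\theta_n(x)-\theta^*(x))(F_{Y^x}(\theta_n(x))-\alpha)<\infty$; combined with $\sum_n\gamma_n p_n=\infty$, this forces the limit of $V_n$ to be $0$, that is $\theta_n(x)\to\theta^*(x)$ almost surely.

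I expect two points to require the real work. The first is the control of the $k$-nearest-neighbour radius: under \textbf{A2} the lower bound $C_{input}$ on the density of $X$ gives $\P(\|X-x\|\le r)\gtrsim r^d$ for small $r$, from which one must deduce, via the order statistics of the distances $\|X_i-x\|$, that $R_n\to0$ almost surely at rate $(k_n/n)^{1/d}$ and that $p_n$ concentrates around $k_{n+1}/n$. The second, and more delicate, point is that this bias is vanishing but not necessarily summable: $\sum_n\gamma_n p_n R_n$ can diverge when $\gamma$ is close to $1/2$ and $\beta$ close to $1$. A naive Robbins--Siegmund bound is therefore insufficient, and one must instead absorb the bias into the nonnegative descent term for large $n$ (where $M(x)R_n$ is small and $\theta_n(x)$ is, by stability, bounded), or equivalently invoke a perturbed stochastic-approximation theorem in which only $R_n\to0$, and not summability of the bias, is needed. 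Reconciling the localization bias with the stochastic-approximation dynamics is the crux of the argument.
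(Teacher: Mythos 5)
Your reduction of \eqref{algovoisin} to drift plus martingale increment, and your reading of the three conditions ($\gamma>1/2$ for summable noise, $\gamma\le\beta$ for $\sum_n\gamma_np_n=\infty$, $\beta<1$ for $R_n\to0$) agree with the paper's. But the core of your argument --- extracting almost sure convergence from a Robbins--Siegmund inequality on $V_n=(\theta_n(x)-\theta^*(x))^2$ --- is left unresolved at exactly the point you yourself flag as the crux, and the first fix you propose does not work. Absorption of the bias into the descent term succeeds only on the event $\{|F_{Y^x}(\theta_n(x))-\alpha|\ge 2M(x)R_n\}$ (there the same-sign property of $\theta_n(x)-\theta^*(x)$ and $F_{Y^x}(\theta_n(x))-\alpha$ gives bias $\le\tfrac12\,$descent); on the complementary event the bias term $2\gamma_{n+1}p_nM(x)R_n|\theta_n(x)-\theta^*(x)|$ is neither dominated by the (possibly vanishing) descent term nor summable, since, as you note, $\sum_n\gamma_np_nR_n$ can diverge when $\gamma$ is near $1/2$ and $\beta$ near $1$. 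Theorem \ref{cvas} assumes only \textbf{A1} and \textbf{A2}: there is no density lower bound \textbf{A4} available to convert smallness of $|F_{Y^x}(\theta_n(x))-\alpha|$ into smallness of $|\theta_n(x)-\theta^*(x)|$, so this leftover term is genuinely uncontrolled. A Young-type splitting $2M(x)R_n|\theta_n(x)-\theta^*(x)|\le M(x)^2R_n^2/\delta+\delta V_n$ does not help either: it places a factor $(1+\delta\gamma_{n+1}p_n)$ in front of $V_n$ with $\sum_n\gamma_{n+1}p_n=\infty$, again outside the scope of Robbins--Siegmund. In the same vein, your parenthetical ``$\theta_n(x)$ is, by stability, bounded'' assumes what must be proved: \textbf{A3} is not assumed here, $\sum_n\gamma_n=\infty$, so a priori the iterates may drift; the paper devotes a full step of its proof to excluding divergence to $\pm\infty$.

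Your second fix --- invoking a perturbed stochastic-approximation theorem needing only $R_n\to0$ rather than summability of the bias --- is the right intuition, but it is not an off-the-shelf citation here: the effective step is the random, merely $\mathcal{F}_n$-measurable quantity $\gamma_{n+1}P_n$ (your $p_n$, Beta-distributed by Lemma \ref{Pn}), and both $\sum_n\gamma_{n+1}P_n=\infty$ a.s.\ and $R_n\to 0$ a.s.\ require dedicated proofs (the paper's Lemma \ref{lemmeborne}, via a block partition and Chebyshev, and Lemma \ref{ps}, via binomial deviations and Borel--Cantelli). What the paper actually does is carry out this Kushner--Clark/Blum-type argument by hand, in a form that never needs a summable bias: it shows that $T_n=\theta_n(x)+\sum_{j\le n}\gamma_jh_{j-1}(\theta_{j-1}(x))$ is an $L^2$-bounded martingale, hence a.s.\ convergent, and then runs three pathwise contradiction arguments (no divergence to $\pm\infty$; no oscillation, via a crossing argument on a segment $[c,d]$; identification of the limit) that use only the sign of the drift $h_n(\theta_n)$ for large $n$ --- positivity being supplied by \textbf{A1}, \textbf{A2} and $R_n\to0$, and, in the identification step, by the eventual lower bound $P_n>(n+1)^{-\epsilon}$ from Lemma \ref{dev} with Borel--Cantelli, together with $\gamma+\epsilon<1$. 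Until you either supply this pathwise argument or a correctly adapted perturbed-SA theorem with random steps, your proposal has a genuine gap precisely where the difficulty of the theorem lies.
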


\vspace{0.3cm}

%\textbf{Sketch of proof :} In the sequel, we still denote $ \mathcal{F}_n:= \sigma \left( X_1, \dots, X_n, Y_1, \dots, Y_n \right)$ and $\mathbb{E}_n$ and $\mathbb{P}_n$ the conditional expectation and probability given $\mathcal{F}_n$. For sake of simplicity, we denote 
%
%\[H(\theta_n(x), X_{n+1},Y_{n+1}):= \left( \mathbb{1}_{Y_{n+1} \leq \theta_n(x)}- \alpha \right) \mathbb{1}_{X_{n+1} \in kNN_{n+1}(x) }\;.\]
%The proof is organized in three steps. 
%\begin{itemize}
%\item[1)] We decompose $H(\theta_n(x),X_{n+1}, Y_{n+1})$ as a sum of a drift and a martingale increment :
%\[h(\theta_n):=\E(H(\theta_n, X_{n+1}, Y_{n+1}) | \mathcal{F}_n) \, \, \text{and} \, \, H(\theta_n, X_{n+1}, Y_{n+1}):=h(\theta_n)+\xi_{n+1}\; .\]
%Then, \[T_n:= \theta_n(x)+ \displaystyle\sum_{j=1}^n \gamma_jh(\theta_{j-1}(x))\; ,\] is a martingale which is bounded in $L^2$. Thus, it converges almost surely.
%
%\item[2)] We show the almost sure convergence of $(\theta_n(x))_n$ :
%
%\begin{enumerate}
%
%\item[a)] First, we check that $(\theta_n(x))$ does not diverge to $+ \infty$ or $- \infty$.
%\item[b)] Then, we prove that $(\theta_n(x))$ converges almost surely to a finite limit.
%
%\end{enumerate}
%
%\item[3)] We conclude by identifying the limit : $\theta^*(x)$, the conditional quantile.
%
%
%\end{itemize}
%
%Steps 2a), 2b) and 3) are shown by contradiction. The key point is that almost surely, after a certain rank, $h_n(\theta_n(x)) >0$. This property is ensured by Assumptions \textbf{A1} and \textbf{A2}. The entire proof is available in Section 5.
%
%\vspace{0.3cm}

\noindent\textbf{Comments on parameters.} In the Theorem \ref{cvas}, we assume that $0< \beta< 1$. This means that the number of neighbors goes to $+ \infty$ and  $||X-x||_{(k_n, n)} \rightarrow 0$, as $P(X\in [x-\xi,x+\xi])>0,\;\forall \xi\geq 0.$ Obviously, the "localization" condition $k_n/n\to 0$ requires $\beta<1$: it is quantitatively exploited in~Lemma \ref{dev}. The condition $\beta \geq \gamma$ can be informally understood in this way. When considering Algorithm (\ref{algoquantile}), we deal with the \textit{global learning rate} $\gamma_n=n^{-\gamma}$. 
In Algorithm (\ref{algovoisin}), since for a fixed input $x$, there is not an update at each step $n$, one may define an \textit{effective learning rate} $\gamma_{k_n}$ as follows. At step $k$, %as $\mathbb{1}_{X_{n+1} \in kNN_{n+1}(x) }$ is Bernoulli distributed, 
$\theta_k(x)$ has a probability of $\P\big(X_{n+1} \in kNN_{n+1}(x)\big)\approx k^{\beta}/k$ to be updated (see Lemma \ref{Pn}). Up to step $n$, the estimator is thus updated a number of times approximately equal to $$N=\displaystyle\sum_{k \leq n} k^{\beta-1} = O\big( n^{\beta} \big)\;.$$ Thus, one has to wait on average up to step $O\big(n^{\frac{1}{\beta}})$ in order to reach  $n$ updates. Hence, on average, the estimator of the quantile at $x$ evolves with Robbins-Monro iterations roughly equivalent to
$$\theta_{k_n}(x)=\theta_{k_n-1}(x) + \gamma_{k_n}\left( \mathbb{1}_{Y_{k_n}\leq \theta_{k_n}(x)} - \alpha \right)\;,$$
with the learning rate 
$$\gamma_{k_n}=\frac{1}{\left(n^{\frac{1}{\beta}}\right)^{\gamma}}= \frac{1}{n^{\frac{\gamma}{\beta}}}\;.$$
This is a well-known fact that this algorithm has a \textit{good} behaviour if, and only if, the sum
$$\displaystyle\sum_{n} \gamma_{k_n}= \displaystyle\sum_{n} \frac{1}{n^{\frac{\gamma}{\beta}}}\; ,$$
is divergent. That is if, and only if $\beta \geq \gamma$. At last, the condition $\frac{1}{2}<\gamma\leq 1$ is a classical assumption on the Robbins Monro algorithm to be consistent (see for example in \cite{rob}). Here, we restrict the condition to $\gamma<1$ because we need $1>\beta \geq \gamma$. The proof of Theorem~\ref{cvas}, in Appendix~\ref{sec:proofs}, gives rigorous foundations to this heuristic discussion.

\subsection{Rate of convergence of the mean squared error}

We now study the rate of convergence of the mean squared error $a_n(x):= \E \left( \left( \theta_n(x)- \theta^*(x) \right)^2 \right)$. Two rather technical assumptions are required. 
\vspace{0.3cm}

\textbf{Assumption A3} The code function $g$ takes its values in a compact interval $[L_Y, U_Y]$. 

\vspace{0.3cm}

\noindent Under Assumption \textbf{A3}, Lemma \ref{borne} (see Appendix~\ref{sec:proofs}) explains why if $\beta \geq \gamma$, then $\theta_n(x)$ is almost-surely bounded in an fixed interval $[L_{\theta_n}, U_{\theta_n}]$,  and that $\left|\theta_n(x)- \theta^*(x)\right|$  is upper-bounded by \[\sqrt{C_1}:= \max \left(U_Y-L_Y+(1 - \alpha),\; U_Y + \alpha -L_Y \right)=U_Y-L_Y+\alpha\;.\]

\vspace{0.3cm}

\textbf{Assumption A4} For all $x$, the law of $g(x, \varepsilon)$ has a density with respect to Lebesgue measure which is lower-bounded by a constant $C_g(x) >0$ on its support.

\begin{lemma}
\label{A4}
Denoting $C_2(x, \alpha):= \min \left(C_g(x), \frac{1-\alpha}{U_Y+\alpha-L_Y}, \right)$, it holds under Assumption $\textbf{A3}$ and $\textbf{A4}$ that for all $n$ in $\mathbb{N}^*$
\begin{equation}
\begin{aligned}
\label{galere}
\big[ F_{Y^x}(\theta_n(x)) - F_{Y^x}(\theta^*(x)) \big] \big[\theta_n(x) - \theta^*(x) \big] \geq C_2(x, \alpha) \big[\theta_n(x) - \theta^*(x) \big]^2.
\end{aligned}
\end{equation}
\end{lemma}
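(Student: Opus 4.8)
The plan is to fix $n$, abbreviate $\theta := \theta_n(x)$, $\theta^* := \theta^*(x)$ and $F := F_{Y^x}$, and to establish the pointwise inequality $[F(\theta) - F(\theta^*)](\theta - \theta^*) \ge C_2(x,\alpha)(\theta - \theta^*)^2$ by a case analysis on the location of $\theta$ relative to the support of $Y^x$. By Assumption \textbf{A4} the variable $Y^x$ has a density, so $F$ is continuous and $F(\theta^*) = \alpha$; since $F$ is non-decreasing the left-hand side is automatically non-negative, and the only content of the lemma is the quantitative lower bound. Write $[L_{Y^x}, U_{Y^x}]$ for the support of $Y^x$, on which the density is at least $C_g(x)$; because $\alpha < 1$ and the density is positive there, $\theta^* = F^{-1}(\alpha)$ belongs to this interval. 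Without loss of generality I would treat $\theta \ge \theta^*$, the reverse inequality being symmetric and $\theta = \theta^*$ trivial.

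First I would handle the \emph{inside} case $\theta \le U_{Y^x}$. Then the whole segment $[\theta^*, \theta]$ lies in the support, so integrating the density lower bound yields $F(\theta) - F(\theta^*) = \int_{\theta^*}^{\theta} f \ge C_g(x)(\theta - \theta^*)$, and the left-hand side is at least $C_g(x)(\theta - \theta^*)^2 \ge C_2(x,\alpha)(\theta - \theta^*)^2$ by definition of $C_2$ as a minimum.

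Next I would treat the \emph{outside} case $\theta > U_{Y^x}$, where $F(\theta) = 1$, so $F(\theta) - F(\theta^*) = 1 - \alpha$ and the left-hand side equals $(1-\alpha)(\theta - \theta^*)$; it then suffices to show $1 - \alpha \ge C_2(x,\alpha)(\theta - \theta^*)$. This is where the a priori bound of Lemma \ref{borne} enters: under \textbf{A3} one has $\theta - \theta^* \le |\theta_n(x) - \theta^*(x)| \le U_Y + \alpha - L_Y$, so $C_2(x,\alpha)(\theta - \theta^*) \le \frac{1-\alpha}{U_Y + \alpha - L_Y}(U_Y + \alpha - L_Y) = 1 - \alpha$. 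For the symmetric outside case $\theta < L_{Y^x}$ one gets $F(\theta) = 0$, hence $F(\theta^*) - F(\theta) = \alpha$ and the left-hand side equals $\alpha(\theta^* - \theta)$; the same bound gives $C_2(x,\alpha)(\theta^* - \theta) \le 1 - \alpha \le \alpha$, where the final step crucially uses $\alpha \ge 1/2$.

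The integration step inside the support is routine; the delicate regime is the outside one, where $F$ is flat and the local density bound is useless. The entire purpose of the second term $\frac{1-\alpha}{U_Y + \alpha - L_Y}$ in the definition of $C_2$ is to cover this case, trading the fixed jump of $F$ (namely $1-\alpha$ on the right and $\alpha$ on the left) against the bounded excursion $|\theta_n(x) - \theta^*(x)|$ furnished by Lemma \ref{borne}; the assumption $\alpha \ge 1/2$ is precisely what lets the lower jump $\alpha$ dominate $1-\alpha$. I would also make sure the support is genuinely an interval so that the integration is legitimate; otherwise one restricts the integral to the part of $[\theta^*,\theta]$ meeting the support, and the same conclusion follows since $F$ only increases there.
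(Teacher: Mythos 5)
Your proof is correct and follows essentially the same route as the paper's: a three-case split (inside the support of $Y^x$, to its right, to its left), with the density lower bound $C_g(x)$ handling the inside case and the jump of $F$ (of size $1-\alpha$ on the right, $\alpha$ on the left) traded against the a priori excursion bound $U_Y+\alpha-L_Y$ furnished by Lemma \ref{borne}, the inequality $\alpha\geq 1/2$ being needed on the left exactly as in the paper. The one flaw is your closing fallback for non-interval supports: restricting the integral to the part of $[\theta^*,\theta]$ meeting the support does \emph{not} rescue the inequality (with a gapped support, $F$ can be flat on a nondegenerate interval $[\theta^*,\theta]$, so the left-hand side vanishes while the right-hand side is positive), but this degenerate case is implicitly excluded by the paper as well, whose own proof even assumes the density bound on all of $[L_Y,U_Y]$.
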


\begin{proof}
When $\theta_n(x) \in [L_Y, U_Y]$, it is obvious that Inequality (\ref{galere}) holds for $C_2:=C_g(x)$. When $\theta_n(x) \in [L_{\theta_n}, L_Y]$, we have 
\[L_{\theta_n} \leq \theta_n(x) \leq L_Y \leq \theta^*(x)\;,\]
and then $F_{Y^x}(\theta_n(x))=0$. Thus,
\begin{equation*}
\begin{aligned}
(\theta_n(x) - \theta^*(x))(F_{Y^x}(\theta_n(x)) - F_{Y^x}(\theta^*(x))) &= (\theta_n(x) - \theta^* (x))^2 \frac{(0- \alpha)}{\theta_n(x)-\theta^*(x)} \\
&= (\theta_n(x)- \theta^*(x))^2 \frac{\alpha}{\theta^*(x)-\theta_n(x)}\\
&\geq (\theta_n(x)- \theta^*(x))^2 \frac{\alpha}{U_Y+ \alpha - L_Y}\\
&\geq (\theta_n(x)- \theta^*(x))^2 \frac{1-\alpha}{U_Y+ \alpha - L_Y}\\
&\geq C_2(x, \alpha)(\theta_n(x)- \theta^*(x))^2\;.\\
\end{aligned}
\end{equation*}
\vspace{0.3cm}
The last case $\theta_n(x)\in[U_Y, U_{\theta_n}]$ can be treated similarly, using that $C_2(x, \alpha) \leq \frac{1-\alpha}{U_Y+\alpha -L_Y}$.
\end{proof}

\vspace{0.3cm}

This lemma is useful to deal with non-asymptotic inequality for the mean squared error. It is the substitute of the strong convexity assumption on the function to minimize, which is often made in the analysis of Robins-Monro stochastic approximation (see for example in\cite{moul}) but which does not hold for quantile estimation.

\begin{theorem}
\label{ineg}
Under hypothesis $\textbf{A1}$, $\textbf{A2}$, $\textbf{A3}$ and $\textbf{A4}$, the mean squared error $a_n(x)$ of the algorithm (\ref{algovoisin}) satisfies the following inequality : $\forall (\gamma, \beta, \zeta)$ such that $0<\gamma\leq \beta <1$ and $1>\zeta >1- \beta$, $\forall n > N_0 := 2^{\frac{1}{\zeta-(1-\beta)}}$, 

\begin{equation*}
\begin{aligned}
a_n(x) & \leq \exp \left( -2 C_2(x, \alpha) (\kappa_n- \kappa_{N_0})\right) C_1 
 +  \displaystyle\sum_{k=N_0+1}^{n} \exp \left( -2 C_2(x, \alpha)\left( \kappa_n - \kappa_k \right) \right) d_k \\
 & + C_1 \exp \left( - \frac{3n^{1- \zeta}}{8} \right)\;,\\
\end{aligned}
\end{equation*}
where for $j \in \mathbb{N}^*$, $\kappa_j= \displaystyle\sum_{i=1}^j i^{-\zeta - \gamma}$ and
\[d_n = C_1 \exp \left( - \frac{3n^{1- \zeta}}{8} \right) +2\sqrt{C_1}M(x) C_3(d) \gamma_{n} \left(\frac{k_n}{n}\right)^{\frac{1}{d}+1} + \gamma_{n}^2\frac{k_n}{n}\;.\]
Here, $C_3(d)>0$ is a constant depending on the dimension $d$ and on the distribution of $X$ (as recalled in Apprendix~\ref{Apx2}).
\end{theorem}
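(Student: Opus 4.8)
The plan is to convert the recursion \eqref{algovoisin} into an affine recursion of the form $a_{n+1}(x)\le \rho_{n+1}\,a_n(x)+d_{n+1}$ and then to unroll it. Writing $\delta_n:=\theta_n(x)-\theta^*(x)$ and $H:=H(\theta_n(x),X_{n+1},Y_{n+1})$, squaring the update gives $\delta_{n+1}^2=\delta_n^2-2\gamma_{n+1}\delta_n H+\gamma_{n+1}^2H^2$. The key structural observation I would record first is that the neighbourhood ball $B_{n+1}:=B\bigl(x,\|X-x\|_{(k_{n+1},n)}\bigr)$, its radius $R_{n+1}$ and its mass $p_{n+1}:=\P_n(X_{n+1}\in B_{n+1})$ are all $\mathcal F_n$-measurable, whereas $(X_{n+1},Y_{n+1})$ is independent of $\mathcal F_n$; by independence $p_{n+1}$ equals the $\mathcal L(X)$-measure of $B_{n+1}$. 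Consequently $h_n(\theta_n)=\E_n(H)=p_{n+1}\bigl(F_{Y^{B_{n+1}}}(\theta_n(x))-\alpha\bigr)$ and, since $(\mathbf 1-\alpha)^2\le 1$, $\E_n(H^2)\le p_{n+1}$, so that
\[
\E_n\bigl(\delta_{n+1}^2\bigr)=\delta_n^2-2\gamma_{n+1}p_{n+1}\,\delta_n\bigl(F_{Y^{B_{n+1}}}(\theta_n(x))-\alpha\bigr)+\gamma_{n+1}^2\E_n(H^2).
\]

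Next I would lower-bound the drift. Since $Y^x$ has a density (Assumption \textbf{A4}), $F_{Y^x}(\theta^*(x))=\alpha$, so I split $F_{Y^{B_{n+1}}}(\theta_n)-\alpha=\bigl(F_{Y^{B_{n+1}}}(\theta_n)-F_{Y^x}(\theta_n)\bigr)+\bigl(F_{Y^x}(\theta_n)-F_{Y^x}(\theta^*)\bigr)$. Lemma \ref{A4} controls the second piece via $\delta_n\bigl(F_{Y^x}(\theta_n)-F_{Y^x}(\theta^*)\bigr)\ge C_2(x,\alpha)\delta_n^2$, while Assumption \textbf{A1} bounds the first by $M(x)R_{n+1}$ in absolute value; combined with the a priori bound $|\delta_n|\le\sqrt{C_1}$ from Lemma \ref{borne}, this produces the pointwise estimate
\[
\E_n\bigl(\delta_{n+1}^2\bigr)\le\bigl(1-2\gamma_{n+1}C_2(x,\alpha)\,p_{n+1}\bigr)\delta_n^2+2\sqrt{C_1}\,M(x)\,\gamma_{n+1}\,p_{n+1}R_{n+1}+\gamma_{n+1}^2p_{n+1}.
\]

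The core of the argument, and the step I expect to be hardest, is the control of the random mass $p_{n+1}$: the contraction factor is $\mathcal F_n$-measurable and hence correlated with $\delta_n^2$, so it cannot be factored out under $\E$. I would resolve this by introducing the good event $G_n:=\{p_{n+1}\ge(n+1)^{-\epsilon}\}$. On $G_n$ the factor is at most $\rho_{n+1}:=1-2C_2(x,\alpha)(n+1)^{-\epsilon-\gamma}$, which is nonnegative for $n\ge N_0$; since $\delta_n^2\ge0$, multiplying the monotone inequality through and taking expectations yields a clean $\rho_{n+1}a_n(x)$ term, whereas on $G_n^c$ I bound $\delta_{n+1}^2\le C_1$ and pay $C_1\P(G_n^c)$. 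To estimate $\P(G_n^c)$ I would use that, under the continuity of $\|X-x\|$ granted by \textbf{A2}, the probability-integral transform makes $p_{n+1}$ equal in law to the $k_{n+1}$-th order statistic of $n$ i.i.d.\ uniforms; hence $\{p_{n+1}<s\}=\{\mathrm{Bin}(n,s)\ge k_{n+1}\}$, and the threshold $s=(n+1)^{-\epsilon}$ lies strictly below the mean $k_{n+1}/(n+1)$ precisely because $\epsilon>1-\beta$. The choice $N_0=2^{1/(\epsilon-(1-\beta))}$ is exactly what pushes $k_{n+1}$ far enough above the binomial mean $n(n+1)^{-\epsilon}$ for a Chernoff upper-tail bound to give $\P(G_n^c)\le\exp\bigl(-3n^{1-\epsilon}/8\bigr)$. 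For the two remaining error terms I would compute moments of the Beta-distributed $p_{n+1}$: directly $\E(p_{n+1})=k_{n+1}/(n+1)$ matches the variance term, while the volume bound $p_{n+1}\ge C_{input}\,\omega_d\,R_{n+1}^d$ from \textbf{A2} (with $\omega_d$ the unit-ball volume) gives $R_{n+1}\le (p_{n+1}/(C_{input}\omega_d))^{1/d}$, whence $\E(p_{n+1}R_{n+1})\le C_3(d)\,(k_{n+1}/(n+1))^{1+1/d}$ and the bias term follows, the dimensional and model constants being absorbed into $C_3(d)$.

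Assembling the split yields the affine recursion $a_{n+1}(x)\le\rho_{n+1}a_n(x)+d_{n+1}$ for $n\ge N_0$, with $d$ exactly the quantity in the statement. Unrolling from $N_0$ to $n$, bounding the telescoping products by $\prod_{j=k+1}^{n}\rho_j\le\exp\bigl(-2C_2(x,\alpha)\sum_{j=k+1}^{n}j^{-\epsilon-\gamma}\bigr)=\exp\bigl(-2C_2(x,\alpha)(\kappa_n-\kappa_k)\bigr)$ through $1-u\le e^{-u}$, and using $a_{N_0}(x)\le C_1$ from Lemma \ref{borne}, produces the initial-condition term and the weighted sum of the $d_k$; the stray $C_1\exp(-3n^{1-\epsilon}/8)$ is the terminal bad-event contribution arising from the index shift between $G_n$ (built on $p_{n+1}$) and $d_{n+1}$. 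Beyond the binomial concentration, the most delicate bookkeeping will be keeping the dependence on the random radius and mass honest throughout, so that the decoupling on $G_n$ and the Beta-moment estimates reproduce exactly the exponent $1+1/d$ and the clean constants appearing in $d_n$.
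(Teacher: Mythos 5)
Your proposal is correct, and its engine is the same as the paper's: the expansion of the squared update, the bias/learning split via \textbf{A1}, \textbf{A3} (Lemma \ref{borne}) and Lemma \ref{A4}, the truncation event $\{P_n>(n+1)^{-\epsilon}\}$ to decouple the $\mathcal{F}_n$-measurable mass from $(\theta_n(x)-\theta^*(x))^2$, Beta/binomial concentration calibrated by $N_0$ (Lemma \ref{dev}), and the unrolling of an affine recursion through $1-u\leq e^{-u}$ (Lemma \ref{Guillaume}). Two genuine differences in execution are worth recording. First, the paper does not run the recursion on $a_n(x)$ itself: it introduces $b_n(x)=\E[(\theta_n(x)-\theta^*(x))^2\mathbf{1}_{A_n}]$, proves $b_{n+1}\leq(1-2C_2(x,\alpha)\gamma_{n+1}\epsilon_n)b_n+e_{n+1}$, unrolls, and only at the end converts back via $a_n(x)\leq b_n(x)+C_1\P(A_n^C)$; that terminal conversion is the true origin of the third term $C_1\exp(-3n^{1-\epsilon}/8)$ in the statement, not an ``index shift'' as you guessed. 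Your per-step split of $a_{n+1}(x)$ over $G_n$ and $G_n^c$ yields a recursion on $a_n(x)$ directly and hence the theorem \emph{without} that third term, i.e.\ a marginally stronger bound, so the discrepancy is immaterial. (Your claim that $\rho_{n+1}\geq0$ for $n\geq N_0$ is legitimate: $C_2(x,\alpha)\leq\frac{1-\alpha}{U_Y+\alpha-L_Y}\leq1$ and $N_0\geq2^{1/(\epsilon+\gamma)}$; the paper needs the same fact, silently, to apply Lemma \ref{Guillaume} with $c_n\in[0,1]$.) Second, for the bias moment $\E[P_n\|X-x\|_{(k_{n+1},n)}]$ the paper's Lemma \ref{unif} goes through the order-statistic density identity and a tail-integral computation, whereas you use the pointwise volume bound $P_n\geq C_{input}C_4(d)R_{n+1}^d$ together with Beta moments (e.g.\ $\E[P_n^{1+1/d}]\leq(\E[P_n^2])^{(1+1/d)/2}$). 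This is more elementary and rests on exactly the same implicit hypothesis the paper already uses in Lemmas \ref{ps} and \ref{unif} (that the full Euclidean ball volume is available inside $\supp(X)$), but note that it produces a different explicit constant than the paper's $C_3(d)$; to get the statement verbatim with the catalogued constant you should either rename your constant or fall back on Lemma \ref{unif} for this single estimate.
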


\vspace{0.3cm}

\textbf{Sketch of proof :} Following~\cite{moul}, the idea of the proof is to establish a recursive inequality on $a_n(x)$, that is for $n \geq N_0$,
\[a_{n+1}(x) \leq a_n(x) (1- c_{n+1}) + d_{n+1}\]
where for all $n \in \mathbb{N}^*$, $0 < c_n<1$ and $d_n >0$. We use the technical Lemma~\ref{Guillaume}. In this purpose we begin by expanding the square

\begin{equation*}
\begin{aligned}
(\theta_{n+1}(x)- \theta^*(x))^2 & = (\theta_n(x)- \theta^*(x))^2\\ & +  \gamma_{n+1}^2 \left[ (1 - 2 \alpha) \mathbb{1}_{Y_{n+1} \leq \theta_n(x)} + \alpha^2 \right] \mathbb{1}_{X_{n+1} \in kNN_{n+1}(x)} \\
& - 2 \gamma_{n+1}(\theta_n(x) - \theta^*(x))\left( \mathbb{1}_{Y_{n+1} \leq \theta_n(x)} - \alpha \right) \mathbb{1}_{X_{n+1} \in kNN_{n+1}(x)}\;. \\
\end{aligned}
\end{equation*}
Taking the expectation conditionally to  $\mathcal{F}_n := \sigma(X_1, \dots, X_n, Y_1, \dots, Y_n)$, using $(1 - 2 \alpha) \mathbb{1}_{Y_{n+1} \leq \theta_n(x)}$ $+ \alpha^2 \leq 1$ and $\alpha = F_{Y^x}(\theta^*(x))$, we obtain thanks to the Bayes formula that
%\begin{small}
\begin{equation}
\label{remplace}
\begin{aligned}
\E_n \left( \left(\theta_{n+1}(x)- \theta^*(x)\right)^2 \right) & \leq \E_n \left( \left(\theta_{n}(x)- \theta^*(x) \right)^2 \right) + \gamma_{n+1}^2 P_{n} \\
& -  2 \gamma_{n+1} \left(\theta_n(x)-\theta^*(x) \right)\\
& \times P_{n} \left[ F_{Y^{B_n^{k_{n+1}}(x)}}(\theta_n(x)) - F_{Y^x}(\theta^*(x))  \right] \;,\\
\end{aligned}
\end{equation}
%\end{small}
where $P_{n}:=\P_n \left(X_{n+1} \in kNN_{n+1}(x) \right)$ 
%as in Lemma \ref{Pn} 
and $B_n^{k_{n+1}}(x)$ is the ball of $\mathbb{R}^d$ centred in $x$ and of radius $||X-x||_{(k_{n+1},n)}$. We rewrite this inequality so as to highlight the presence of two different contributions to the risk: 

\begin{itemize}
\item[1)] First, the quantity $F_{Y^{B_n^{k_{n+1}}(x)}}(\theta_n(x)) - F_{Y^x}(\theta_n(x))$ represents the \textit{bias error} (due to the use of a biased sample of $F_{Y^x}$). Using Assumption~\textbf{A1}, it can be upper-bounded as
\[|F_{Y^{B_n^{k_{n+1}}(x)}}(\theta_n(x)) - F_{Y^x}(\theta_n(x))|  \leq  M(x) ||X-x||_{(k_{n+1}, n)}\;.\]
Moreover, by Assumption~\textbf{A3}, $|\theta_n(x)- \theta^*(x)| \leq \sqrt{C_1}$. Thus,
\begin{multline*}
\left|2 \gamma_{n+1} (\theta_n(x)- \theta^*(x))  P_{n} \left[ F_{Y^{B_n^{k_{n+1}}(x)}}(\theta_n(x)) - F_{Y^x}(\theta_n(x)) \right]\right| \\\leq 2 \gamma_{n+1}\sqrt{C_1}M(x) P_{n} ||X-x||_{(k_{n+1}, n)}\;.
\end{multline*}

\item[2)] The second quantity, $F_{Y^x}(\theta_n(x)) - F_{Y^x}(\theta^*(x))$ represents the \textit{on-line learning error} (due to the use of a stochastic optimization algorithm). Thanks to Assumption \textbf{A4} we obtain
\[\left( \theta_n(x) - \theta^*(x) \right)  \left[F_{Y^x}(\theta_n(x))- F_{Y^x}(\theta^*(x)) \right] 
 \geq C_2(x, \alpha) \left[ \theta_n(x) - \theta^*(x) \right]^2\;.\]
\end{itemize}
Taking the expectation in Inequality~\eqref{remplace} yields
\begin{equation*}
\begin{aligned}
a_{n+1}(x) & \leq a_n(x) - 2 \gamma_{n+1} C_2(x, \alpha) \E \left[ (\theta_n(x) - \theta^*(x))^2 P_{n} \right] + \gamma_{n+1}^2 \E(P_{n}) \\
&+ 2 \gamma_{n+1}M(x) \sqrt{C_1} \E(||X-x||_{(k_{n+1}, n)} P_{n})\;.\\
\end{aligned}
\end{equation*}

This inequality reveals a problem : thanks to Lemmas \ref{Pn} and \ref{unif} (and thus thanks to assumption \textbf{A2}) we can deal with the last two terms, but we are not able to evaluate directly $\E \left[ (\theta_n(x) - \theta^*(x))^2 P_{n} \right]$.
In order to solve this problem, we use a truncation parameter $\zeta_{n}$. Instead of writing a recursive inequality on $a_n(x)$ we write such inequality with the quantity $b_n(x):= \E \left[ \left(\theta_n(x) - \theta^*(x)\right)^2 \mathbb{1}_{P_{n} > \zeta_{n}} \right]$. Choosing $\zeta_n=n^{-\zeta}$, we have to tune another parameter but thanks to $\mathbf{A3}$ and deviation inequalities recalled in Lemma~\ref{dev}, we obtain a recursive inequality on $a_n(x)$ from the one on $b_n(x)$, for $n \geq N_0$. 

\vspace{0.3cm}

\textbf{Comments on the parameters.} We choose $0 < \beta <1$ for the same reasons as in Theorem \ref{cvas}. Regarding $\gamma$, the inequality is true for all $0<\gamma \leq \beta$ (which is unusual, as you can see in \cite{godichon} for example). We will nevertheless see in the sequel that this is not because the inequality is true that $a_n(x)$ converges to 0. We will discuss later \textit{good} choices for $(\gamma, \beta)$. 

\vspace{0.3cm}

\textbf{Compromise between the two errors.} This analysis emphasizes the necessity of a compromise on $\beta$ to deal with the two previous errors. Indeed,

\begin{itemize}
\item[$\bullet$] the \textit{bias error} gives the term \[\exp \left( -2 C_2(x, \alpha)(x)\displaystyle\sum_{k=N_0+1}^{n} \frac{1}{k^{\zeta+\gamma}} \right)\;,\] of the inequality. This term decreases to 0 if and only if $\gamma+ \zeta < 1$ which implies $\beta > \gamma$. It suggests that $\beta$ should not be chosen too small. 

\item[$\bullet$] the \textit{on-line learning error} gives the term $\left( k_n/n \right)^{1/d+1}=n^{-(1-\beta)(1+1/d)}$ in the remainder. For the remainder to decrease to 0 with the faster rate, we then need $\beta$ to be as small as possible compared to 1. It suggests that $\beta$ should not be too large. 
\end{itemize}

\vspace{0.3cm}

The rate of convergence of the mean squared error can be deduced from this theorem. We study the order of the remainder $d_n$ in order to exhibit the dominating terms. It appears that $d_n$ is the sum of three terms. The first one, with a exponential decay, is always neglectible as soon as $n$ is large enough, since $1> \zeta$. The two other are powers of $n$. Comparing their exponent, we can find the dominating term in function of $\gamma$ and $\beta$. Actually, there exists a rank $N_1(x,d)$ and some constants $C_5$ and $C_6(x, d)$ such that, for $n \geq N_0 +1$,

\begin{itemize}
\item[$\bullet$] if $\beta \leq 1-d \gamma$, then $d_n \leq C_5 n^{-2\gamma + \beta - 1}\;$,
\item[$\bullet$] if $\beta > 1-d \gamma$, then $d_n \leq C_6(x,d)n^{-\gamma + (1+\frac{1}{d})(\beta-1)}\;$.
\end{itemize}
Plugging these inequalities into Theorem \ref{ineg} leads to the following result. 
\begin{corollary}
\label{premier}
Under assumptions of Theorem \ref{ineg}, there exist ranks $N_4(x, \alpha, d)$ and constants $C_7(x, \alpha, d)$ and $C_8(x, \alpha)$ such that for all $n \geq N_4(x, \alpha, d)$,

\begin{itemize}
\item[$\bullet$] when $\beta > 1-d \gamma$ and $1- \beta < \zeta < \min \left( 1-\gamma, \left(1+ \frac{1}{d} \right) (1- \beta) \right)$, 
\[a_n(x) \leq \frac{C_7(d, x, \alpha, \zeta, \gamma)}{n^{-\zeta +\left( 1+ \frac{1}{d} \right) (1- \beta)}}\; ;\]
\item[$\bullet$] when $\beta \leq 1-d \gamma$, and 
%$1-\zeta<\min(1-\beta+\gamma, 1-\gamma)$,
$\zeta>\max(\beta-\gamma, \gamma-1)$,
\[a_n(x) \leq \frac{C_{8}(x, \alpha)}{n^{\gamma-\beta +1 - \zeta}}\; .\]
\end{itemize}
\end{corollary}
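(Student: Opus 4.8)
The plan is to post-process the explicit bound of Theorem~\ref{ineg}: insert the two polynomial regimes of the remainder $d_n$ recalled just above, discard the two exponential contributions, and reduce the whole estimate to a single weighted-sum bound. Write the right-hand side of Theorem~\ref{ineg} as the sum of the initial term $\exp(-2C_2(x,\alpha)(\kappa_n-\kappa_{N_0}))C_1$, the convolution sum $\sum_{k=N_0+1}^n \exp(-2C_2(x,\alpha)(\kappa_n-\kappa_k))\,d_k$, and the tail term $C_1\exp(-3n^{1-\epsilon}/8)$. The ranges imposed on $\epsilon$ force $\epsilon+\gamma<1$, so $\kappa_n=\sum_{i=1}^n i^{-\epsilon-\gamma}\sim n^{1-\epsilon-\gamma}/(1-\epsilon-\gamma)\to\infty$ and the initial term decays like $\exp(-c\,n^{1-\epsilon-\gamma})$; together with the tail term (recall $\epsilon<1$), both decay faster than any negative power of $n$. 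Hence, beyond a suitable rank, each is bounded by a constant multiple of the polynomial main term and may be absorbed into the constants $C_7$, $C_8$.

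The heart of the argument is the control of the convolution sum. Since the polynomial bounds on $d_n$ stated above already dominate the exponential piece of $d_n$ for large indices, I would substitute $d_k\le C_5\,k^{-(2\gamma-\beta+1)}$ when $\beta\le 1-d\gamma$, and $d_k\le C_6(x,d)\,k^{-(\gamma+(1+1/d)(1-\beta))}$ when $\beta> 1-d\gamma$. Both cases reduce to estimating $\Sigma_n(p):=\sum_{k=N_0+1}^n \exp(-2C_2(x,\alpha)(\kappa_n-\kappa_k))\,k^{-p}$ for a suitable $p>0$, and the key inequality to establish is $\Sigma_n(p)\le C\,n^{\epsilon+\gamma-p}$ for $n$ large. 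To prove it, factor out $\exp(-2C_2(x,\alpha)\kappa_n)$ and examine $a_k:=\exp(2C_2(x,\alpha)\kappa_k)\,k^{-p}$. Because $\kappa_k-\kappa_{k-1}=k^{-\epsilon-\gamma}$ and $\epsilon+\gamma<1$, one has $a_k/a_{k-1}=\exp(2C_2(x,\alpha)k^{-\epsilon-\gamma})(1-1/k)^{p}>1$ from some rank on, so $(a_k)$ is eventually increasing and the sum is governed by its terminal behaviour; comparing $\sum_k a_k$ with the integral $\int \exp(2C_2(x,\alpha)\kappa(t))t^{-p}\,dt$ and using that the logarithmic growth rate of the weight is $\sim 2C_2(x,\alpha)\,t^{-\epsilon-\gamma}$ yields $\sum_k a_k\le C\,a_n\,n^{\epsilon+\gamma}/(2C_2(x,\alpha))$. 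Multiplying back by $\exp(-2C_2(x,\alpha)\kappa_n)$ gives exactly $\Sigma_n(p)\le C\,n^{\epsilon+\gamma-p}$.

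It remains to read off the exponents. When $\beta\le 1-d\gamma$ we take $p=2\gamma-\beta+1$, so $\epsilon+\gamma-p=\epsilon-\gamma+\beta-1$ and $a_n(x)\le C_8(x,\alpha)\,n^{-(\gamma-\beta+1-\epsilon)}$; when $\beta> 1-d\gamma$ we take $p=\gamma+(1+1/d)(1-\beta)$, so $\epsilon+\gamma-p=\epsilon-(1+1/d)(1-\beta)$ and $a_n(x)\le C_7(d,x,\alpha,\epsilon,\gamma)\,n^{-(-\epsilon+(1+1/d)(1-\beta))}$. The constants $C_7,C_8$ gather $C_1$, $C_5$ or $C_6(x,d)$, and the factor $1/(2C_2(x,\alpha))$ coming from the comparison, while the rank $N_4$ is chosen large enough that the eventual monotonicity of $(a_k)$, the integral comparison, and the negligibility of the two exponential terms all hold at once. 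The constraints on $\epsilon$ play precisely the expected roles: $\epsilon>1-\beta$ makes $N_0=2^{1/(\epsilon-(1-\beta))}$ well defined as in Theorem~\ref{ineg}; $\epsilon+\gamma<1$ (that is $\epsilon<1-\gamma$) drives $\kappa_n\to\infty$ and validates the weighted-sum estimate; and the remaining upper bound on $\epsilon$ keeps the final exponent positive, ensuring $a_n(x)\to0$.

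The step I expect to be the main obstacle is the weighted-sum estimate $\Sigma_n(p)\le C\,n^{\epsilon+\gamma-p}$ with a fully explicit constant valid from an explicit rank, rather than a merely asymptotic equivalent. The delicate points are showing that the exponential weights genuinely concentrate the mass of the sum near $k=n$, which is exactly where $\epsilon+\gamma<1$ enters, controlling the contribution of the lower end of the summation so that it does not spoil the constant, and tracking the multiplicative factor through the integral comparison. Once this estimate is in hand, substituting the two regimes of $d_n$, discarding the exponential terms, and matching exponents is routine bookkeeping.
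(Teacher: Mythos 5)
Your proposal is correct and follows essentially the same route as the paper: the paper likewise discards the two exponentially small terms, inserts the two polynomial regimes of $d_n$, and controls the convolution sum by concentrating it near $k=n$ (splitting at $\lfloor n/2\rfloor$, the lower part being exponentially small thanks to $\epsilon+\gamma<1$) and comparing the upper part with the integral of $\exp\left(2C_2(x,\alpha)\kappa(t)\right)t^{-\epsilon-\gamma}$, whose exact antiderivative yields the $1/\left(2C_2(x,\alpha)\right)$ factor — precisely your estimate $\Sigma_n(p)\le C\,n^{\epsilon+\gamma-p}$. The only difference is presentational: you unify the two parameter regimes through a single weighted-sum lemma, whereas the paper works out the case $\beta>1-d\gamma$ in detail and invokes the same arguments for the other case.
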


\begin{remark}
For other values of $\gamma$ and $\beta$, the derived inequalities do not imply the convergence to 0 of $a_n(x)$.  
\end{remark}
\vspace{0.3cm}

%\textbf{Comparison with others results.} When one study the mean squared error for the classical stochastic algorithm to estimate the quantile, under the assumption of the lower-bounded density, we find $a_n(x)=\mathcal{O}\left( n^{-\gamma} \right)$ for $\frac{1}{2}<\gamma\leq1$. Our study shows a rate of convergence of $\mathcal{O}\left( n^{-\gamma+1+\eta} \right)$ for these $\gamma$. Our rate is lower but it is logical because we have a second level of approximation since we only have at hand a sample of bias laws. Moreover, we are able to give the rate of convergence for $0< \gamma \leq \frac{1}{2}$ also. 
%
%Let us compare our results to classical result on $k$-nearest neighbors. Bhattacharya and al. in \cite{bata2} show that, to estimate conditional quantile with the generalized inverse of empirical cumulative function, the best number of neighbors is for $\beta= \frac{4}{5}$ when inputs are in $\mathbb{R}$. With this parameter, they show the weak convergence of their estimator at speed  $\mathcal{O}\left(n^{\frac{2}{5}}\right)$. Our result gives for optimal $\beta= \frac{1}{2}+ \eta_{\beta}$ in dimension 1, a rate of convergence of the mean squared error in $n^{\frac{1}{2}}$ which is then slower. Nevertheless, our result is non-asymptotic and our algorithm is easier to compute that their estimator which require to calculate a generalized inverse. Moreover, our inequality is true whatever the dimension $d$ of the input space.  

From this corollary, the \textit{optimal} choices for $(\beta, \gamma)$ can be derived, or more precisely parameters for which our upper-bound on the mean squared error decreases with the fastest rate.  

\begin{corollary}
\label{doubleoptim}
Under the same assumptions as in Theorem \ref{ineg}, the optimal choice is $\gamma= \frac{1}{1+d}$ with
%$\beta= \gamma + \eta_{\beta}$ where $\eta_{\beta}>0$ is as small as possible. 
$\zeta>\beta- \frac{1}{1+d}>0$ as small as possible.
With such parameters, there exists a constant $C_{9}(x, \alpha, d)$ such that $\forall n \geq N_4(x, \alpha, d)$,
\[a_n(x) \leq \frac{C_{9}(x, \alpha, d)}{n^{\frac{2}{1+d} + \frac{1-\beta-\zeta}{2}-\beta}}\;.\]
%\[a_n(x) \leq \frac{C_{9}(x, \alpha, d)}{n^{\frac{1}{1+d} - \eta}}\]
%where $\eta=\frac{\eta_{\zeta}}{2}+ \eta_{\beta}$ and $\eta_{\zeta}= 1-\beta - \zeta$. 
\end{corollary}
\vspace{0.3cm}

\textbf{Comments on the constant $\mathbf{C_9(x, \alpha, d)}$.}
Like all the other constants of this paper, we know the explicit expression of $C_9(x, \alpha, d)$. For a numerical example, see Subsection~\ref{subsec:ex1}. 

Notice that the constant $C_9(x, \alpha, d)$ depends on $x$ only through the lower bound $C_g(x)$ and the smoothness parameter $M(x)$. Often, $C_g(x)$ and $M(x)$ do not really depend on $x$ (see for example Subsection 3.1). In these cases (or when we can easily find a bound of $C_g(x)$ and $M(x)$ which do not depend on $x$), our result is uniform in $x$. Then, it is easy to deal with the integrated mean squared error and conclude that

\[\int_X a_n(x) f_X(x) dx \leq \frac{C_{9}(\alpha, d)}{n^{\frac{2}{1+d} + \frac{1-\beta-\zeta}{2}-\beta}}\;.\]
When $\alpha$ increases to 1, we try to estimate an extremal quantile. Then, $C_2(x, \alpha)$ becomes smaller and then $C_9(x, \alpha, d)$ increases: the bound deteriorates. This is because when $\alpha$ is large, the probability to sample on the right side of the quantile is small and the algorithm is less accurate.

Let us now comment on the dependency on the dimension $d$. The constant $C_9(x, d, \alpha)$ decreases when the dimension $d$ increases. Nevertheless, this tendency to decrease is too small to balance the behavior of the rate of convergence which is in $n^{\frac{-2}{1+d}}$, an illustration of the well-known curse of dimensionality.

\vspace{0.3cm}

\textbf{Comment on the rank $\mathbf{N_4(x, \alpha, d)}$.} This rank is the maximum of four ranks. There are two kinds of ranks. The ranks $(N_i)_{i \ne 0}$ depend on constants of the problem but are reasonably small, because the largest of them is the rank after which exponential terms are smaller than power of $n$ terms, or smaller power of $n$ terms are smaller than bigger power of $n$ terms. They often appear to be much smaller than $N_0$, which tends to be the limiting factor relevant for identifying optimal parameters (and at this stage the reasoning is no longer non-asymptotic). 

The rank $N_0$ is completely different. It was introduced in the first theorem because we could not deal with $a_n(x)$ directly. In fact it is the rank after which the deviation inequality, allowing us to use $b_n(x)$, is guaranteed to hold. It depends on the gap between $\zeta$ and $1- \beta$. The optimal $\zeta$ to obtain the rate of convergence of the previous corollary is $\zeta= 1 - \beta + \eta_{\zeta}$ with $\eta_{\zeta}$ as small as possible. The constant $\eta_{\zeta}$ appears on the rank $N_0$ and also on the rate of convergence (under the assumption that $N_4=N_0$ which is the case most of time)

\[\forall n \geq N_0=\exp \left( 2\eta_{\zeta}^{-1}\right), \, \, a_n(x)=\mathcal{O}\left(n^{\frac{-2}{1+d}+ \frac{\eta_{\zeta}}{2}+\beta}\right)\;.\] 
The smaller $\eta_{\zeta}$, the faster the rate of convergence, but also the larger the rank after which the inequalities hold. 

Let us give an example. For a budget of $N=1000$ calls to the code, one may choose $\eta_{\zeta}=0.3$ for the inequality to be theoretically true for $n=N$. Table \ref{tab1} gives the theoretical precision for different values of $d$ and compares it with the ideal case where $\eta_{\zeta}=0$. 

\begin{table}
\centering
\caption{Expected precision for the MSE when $N=1000$}
\label{tab1}
\begin{tabular}{|c|c|c|c|}
\hline 
$d$ & 1 & 2 & 3 \\ 
\hline 
$\eta_{\zeta}$=0.3 & 0.088 & 0.28 & 0.5 \\ 
\hline 
$\eta_{\zeta}$=0 & 0.031 & 0.1 & 0.17 \\ 
\hline 
\end{tabular} 
\end{table}

We can observe that, when $\eta_{\zeta}>0$, the precision increases with the dimension faster than when $\eta_{\zeta}=0$. Moreover, as soon as $\frac{1}{1+d}< \eta_{\zeta}/2$ ($d=6$ for our previous example), the result does not allow to conclude that $a_n$ decreases to 0 with this choice of $\eta_{\zeta}$.

Nonetheless, our simulation study (see next section) seem to indicate that this difficulty could be only an artifact of the proof: the introduction of $\zeta_n$ is required by the difficulty to compute  $\E\big[ (\theta_n(x)- \theta^*)P_n\big]$. In practice, the optimal rate of convergence for optimal parameters is reached early (see Section~3). 

\section{Numerical simulations}\label{sec:simu}

In this part we present some numerical simulations to illustrate our results. The following (simplistic) examples are chosen so as to be able to evaluate clearly the strengths and weaknesses of our algorithm:  the constants can be computed and the results can be interpreted easily. To begin with, we deal with dimension 1. We study two stochastic codes, differing by their smoothness.

\subsection{Dimension 1: square function}\label{subsec:ex1}

The first toy example is the very smooth code
\[g(X, \varepsilon)= X^2 + \varepsilon\]
where $X \sim \mathcal{U}([0, 1])$ and $\varepsilon \sim \mathcal{U}([-0.5, 0.5])$. We try to estimate the quantile of level $\alpha=0.95$ for $x=0.5$ and initialize our algorithm to $\theta_1=0.3$. We first check that our assumptions are fulfilled in this case. The conditional distribution of the output given $X=x$ is $ %\mathcal{L}(g(x, \varepsilon))= 
\mathcal{U}\big([-\frac{1}{2} + x^2 ; \frac{1}{2}+x^2]\big)$, and \[f_{(X, Y)}(u, v)= \mathbb{1}_{[- \frac{1}{2}+u^2, \frac{1}{2} + u^2] }(v)\,\mathbb{1}_{[0,1]}(u)\; .\]
Moreover, the code function $g$ takes its values in the compact set $[L_Y, U_Y]=[-\frac{1}{2}; \frac{3}{2}]$. Let us study assumption $\textbf{A1}$. If $a,b>0$ and if $B=[x-a, x+b]$ is an interval containing $x$, then
\begin{equation*}
\begin{aligned}
\left|F_{Y^B}(t)- F_{Y^x}(t) \right| & \leq  \left|\frac{ \int_{- \infty}^t \int_B f_{(X, Y)}(z, y) dy dz}{\int_B f_X(z) dz} - \int_{-\infty}^t f_{(X, Y)}(x, y) dy \right| \\
& \leq \frac{\int_{- \frac{1}{2}}^t \int_{x-a}^{x+b} \left| \mathbb{1}_{[- \frac{1}{2} + z^2; \frac{1}{2}+ z^2]} - \mathbb{1}_{[- \frac{1}{2} + z^2; \frac{1}{2}+ z^2]} \right|(y) dz dy}{\mu(B)} \;.\\
\end{aligned}
\end{equation*}
Now, we have to distinguish the cases in function of the localization of $t$. There are lots of cases, but computations are nearly the same. That is why we will develop only one case here. When $t \in [-\frac{1}{2}; x^2 - \frac{1}{2}]$, we have 

\begin{equation*}
\begin{aligned}
\left|F_{Y^B}(t)- F_{Y^x}(t) \right|& \leq   \frac{ \int_{x-a}^{x+b}  \int_{- \frac{1}{2}}^t  \left| \mathbb{1}_{[- \frac{1}{2} + z^2; \frac{1}{2}+ z^2]} - \mathbb{1}_{[- \frac{1}{2} + z^2; \frac{1}{2}+ z^2]} \right|(y)}{a+b} \\
& = \frac{ \int_{x-a}^{x+b} \left( \mathbb{1}_{z \geq x} (0) + \mathbb{1}_{z \leq x} ( t-z^2 + \frac{1}{2}) \mathbb{1}_{z \geq \sqrt{t+\frac{1}{2}}} \right) dz}{a+b}\\
& = \frac{\int_{x-a}^{x} (t+\frac{1}{2}-z^2) dz}{b+a}\;.\\
\end{aligned}
\end{equation*}
There are again two different cases. Since $t \in [-\frac{1}{2}; x^2 - \frac{1}{2}]$, we always have $(t + \frac{1}{2})^{\frac{1}{2}} \leq x$. But the position of $(t + 1/2)^{1/2}$ relative to $(x-a)$ is not always the same. If $t\in [- \frac{1}{2};-\frac{1}{2}(x-a)^2]$, we get

\begin{equation*}
\begin{aligned}
\left|F_{Y^B}(t)- F_{Y^x}(t)\right| & \leq \frac{\int_{x-a}^{x+b}(t-z^2+ \frac{1}{2}) dz}{b+a} \\
& \leq \left(t+ \frac{1}{2}\right)a - \frac{x^3}{3} + \frac{(x-a)^3}{3} \\
& \leq (x-a)^2 a - x^2a+a^2x- \frac{a^3}{3} \\
& \leq -a^2 x + \frac{2a^3}{3} \\
& \leq 0 + r_B \times 1^2 \times \frac{2}{3}\;, \\
\end{aligned}
\end{equation*}
as $0<a <1$. Finally, in this case, $\textbf{A1}$ is true with $M(x)= 2/3$. We can compute exactly in the same way for the other cases and we always find an $M(x)\leq 2/3$. The assumption $\textbf{A2}$ is also satisfied, taking $C_{input}=1$. We have already explained that assumption $\textbf{A3}$ is true for $[L_Y, U_Y]=[-1/2, 3/2]$. Finally assumption $\textbf{A4}$ is also satisfied with $C_{g}(x)=1$ and $C_2(x, \alpha)= 0.02$.

\subsubsection{Almost sure convergence}

Let us first deal with the almost sure convergence. We plot in Figure \ref{fig:conv5000}, for $(\beta, \gamma) \in [0,1]^2$, the relative error of the algorithm. Best parameters are clearly in the area $\beta > \gamma \geq 1/2$. We can even observe that for $\beta \approx 1$, $\beta \leq \gamma$ or $\gamma < 1/2$, the algorithm does not converge almost surely (or very slowly). This is in accordance with our theoretical results. Nevertheless, we can observe a kind of continuity for $\gamma$ around $1/2$ : in practice, the convergence becomes really slow only when $\gamma$ is significantly far away from $1/2$.

\begin{figure}[!ht]
$$\includegraphics[scale=0.5]{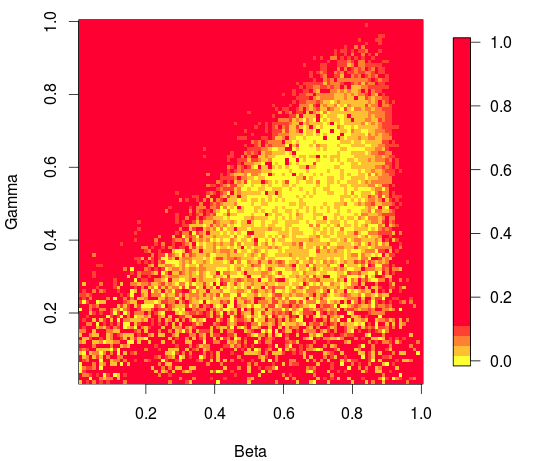}$$
\caption{Relative error for $n=5000$ dependence on $\beta$ and $\gamma$.}\label{fig:conv5000}
\end{figure}

\subsubsection{Mean Square Error (MSE)}
Let us study the best choice of $\beta$ et $\gamma$ in terms of $L^2$-convergence. We plot in Figure~\ref{fig:MSEcarre50} the mean squared error in function of $\gamma$ and $\beta$ (we estimate the MSE by a Monte Carlo method of 100 iterations). 

\begin{figure}[ht!]
\centering
\subfloat[Mean square error, $n=50$.]{\includegraphics[scale=0.3]{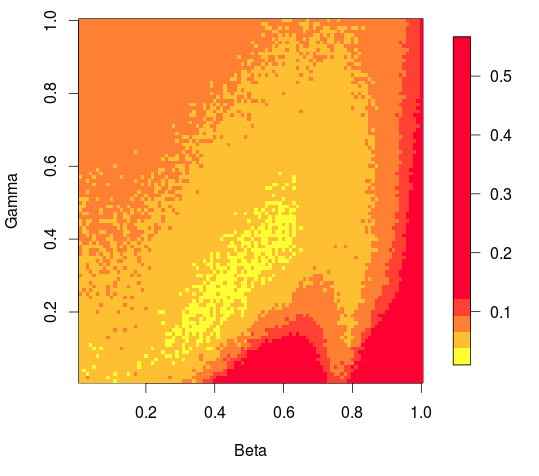}}
\subfloat[Mean square error, $n=200$.]{\includegraphics[scale=0.3]{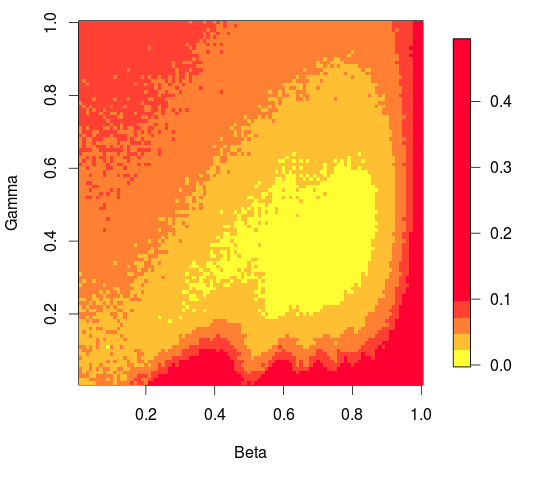}}
\caption{Mean squared error in function of $\beta$ and $\gamma$ for the square function.}\label{fig:MSEcarre50}
\end{figure}
Simulations confirm that the theoretical optimal area $\gamma=0.5$ and $\beta=\gamma+\eta_{\beta}$ gives the smallest MSE. Nevertheless, it seems that in practice we can relax the condition that \textit{the gap $\eta_{\beta}$ between $\beta$ and $\gamma$ is as small as possible}. Indeed, when $\eta_{\beta}$ is reasonably big, simulations show that we are still in the optimal area. 

In this case, we have at hand all the parameters to compute the theoretical bound of our theorems. In particular, in corollary \ref{doubleoptim}, we get

\[a_n(x)\leq \frac{C_{9}(x, d, \alpha)}{n^{\frac{1}{1+d}- \eta}}\;.\]
Table \ref{tab2} summarizes the value of the constants needed to compute the theoretical bound in this case. 

\begin{table}
\centering
\caption{Constant values}
\label{tab2}
\begin{tabular}{|c||c|c|c|c|c|c|c|c|c|c|c|c|}
\hline 
Constant& $\alpha$ & $M(x)$ & $C_{\text{input}}$ & $C_g(x)$ & $C_2(x, \alpha)$ & $U_Y-L_Y$  \\ 
\hline 
Value &0.95 & $\frac{2}{3}$ & 1 & 1 & 0.02 & 2 \\ 
\hline 
 Constant & $\sqrt{C_1}$ &$ C_3(d)$ & $C_4(d)$ & $C_5(x, d)$ & $C_6(x, d)$ & $C_9(x, d, \alpha)$ \\
\hline
 Value &2.95 & 7.39 & 2 & 1.95 & 12 & 180 \\
\hline
\end{tabular}

\end{table}
%\vspace{0.3cm}
For $N=1000$, we obtain the bound $a_N(x) \leq 5.8$ which is over-pessimistic compared to the practical results. We can then think to a way to improve this bound. First of all, the constant $C_2(x, \alpha)$ is in fact not so small. Indeed, we have to take a margin in the proof, for the case where $\theta_n(x)$ goes out of $[L_Y, U_Y]$. This happens only with a very small probability. If we do not take this case into account, we have $C_2(x, \alpha)=1$. Then $C_{9}(x, \alpha, d) \approx 3.7$ and then, for $N=1000$, the bound is 0.11. Practical results are still better (we can observe that for $n=50$, we already have a MSE inferior to $0.05$), but the gap is less important.

\subsection{Dimension 1 - absolute value function}

Let us see what happens when the function $g$ is less smooth with respect to the first variable. We study the code 
\[g(X, \varepsilon) = |X| + \varepsilon\;,\]
where $X \sim \mathcal{U}\left( [-1, 1] \right)$ and $\varepsilon \sim \mathcal{U}\left( [-0.5, 0.5] \right)$.  We want to study the conditional quantile in $x=0$ (the point for which the differentiability fails). Assumptions can be checked as above. Since the almost surely convergence is true and gives really same kind of plots than the previous case, we only study the convergence of the MSE. In that purpose, we plot in Figure \ref{fig:MSEabs} the MSE  (estimated by 100 iterations of Monte Carlo simulations) in function of $\gamma$ and $\beta$, for n=300 (the discontinuity constraints us to make more iterations to have a sufficient precision) and $\theta_1=0.3$. Conclusions are the same than in the previous example concerning the best parameters. Nevertheless, we can observe that the lack of smoothness implies some remarkable behaviour around $\gamma=1$. 

\begin{figure}[!ht]
$$\includegraphics[scale=0.5]{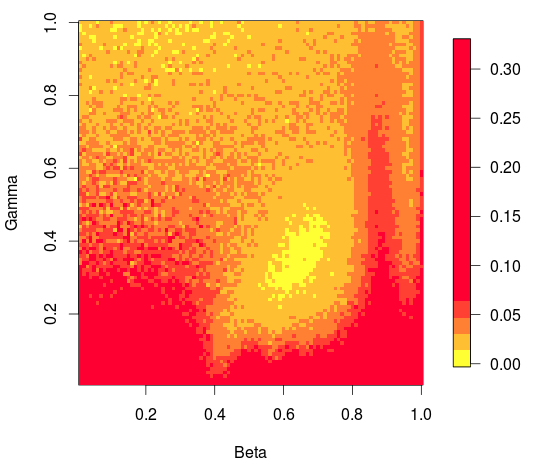}$$
\caption{MSE for $n=300$ in function of $\beta$ and $\gamma$ for absolute value function.}\label{fig:MSEabs}
\end{figure}

\subsection{Dimensions 2 and 3}

In dimension $d$, we showed that theoretical optimal parameters are $\gamma= \frac{1}{1+d}$ and $\beta= \gamma+ \eta$. To see what happens in practice, we still plot Monte Carlo estimations (200 iterations) of the MSE in function of $\gamma$ and $\beta$. 

\subsubsection{Dimension 2}

In dimension 2, we study two codes :
\[g_1(X, \varepsilon) = ||X||^2 + \varepsilon \, \, \text{and} \, \, g_2(X, \varepsilon)= X_1^2 + X_2 + \varepsilon\; , \]
where $X=(X_1, X_2) \sim \mathcal{U}\left( [-1, 1]^2 \right)$ and $\varepsilon \sim \mathcal{U}\left( [-0.5, 0.5] \right)$. In each case, we choose $n=400$ and want to study the quantile in the input point $x=(0, 0)$ and initialize our algorithm in $\theta_1=0.3$.  In Figure \ref{fig:d2}, we can see that $\beta=1$ and $\gamma=1$ are still really bad parameters. As in our theoretical results, $\gamma=\frac{1}{1+d}=\frac{1}{3}$ seems to be the best choice. Nevertheless, even if it is clear that $\beta< \gamma$ is a bad choice, the experiments seems to show that best parameter $\beta$ is strictly superior to $\gamma$, more superior than in theoretical case, where we take $\beta$ as close as possible of $\gamma$. As we said before, in practice, $N_0$ seems not to be the true limit rank. Indeed, with only $n=400$ iterations, in this case, the MSE, in the optimal parameters case reaches $0.06$.

\begin{figure}[ht!]
\centering
\subfloat[MSE, $n=400$, $d=2$, norm function $g_1$.]{\includegraphics[scale=0.3]{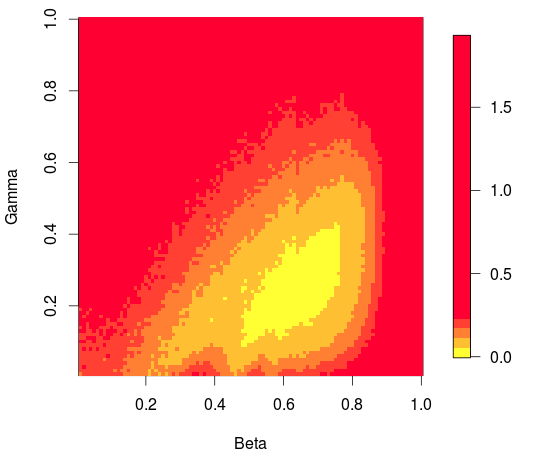}}
\subfloat[MSE, $n=400$, $d=2$, function $g_2$.]{\includegraphics[scale=0.3]{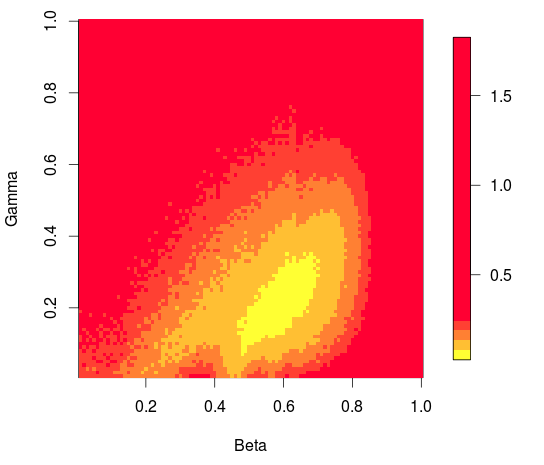}}
\caption{Mean square error in function of $\beta$ and $\gamma$.}\label{fig:d2}
\end{figure}

\subsubsection{Dimension 3}

In dimension 3, we study the two codes \[g_1(X, \varepsilon) = ||X||^2 + \varepsilon \quad \text{and} \quad g_2(X, \varepsilon)= X_1^2 + X_2 + \frac{X_3^3}{2} + \varepsilon\;,\]
where $X=(X_1, X_2, X_3) \sim \mathcal{U}\left( [-1, 1]^3 \right)$ and $\varepsilon \sim \mathcal{U}\left( [-0.5, 0.5] \right)$. In each case, we choose $n=500$ and want to study the quantile in the input point $(0, 0, 0)$. The interpretation of Figure \ref{fig:d3} are the same than in dimension 2. The scale is not the same, the convergence is slower again but with $n=500$ we nevertheless obtain a MSE of $0.10$.

\begin{figure}[ht!]
\centering
\subfloat[MSE, $n=500$, $d=3$, norm function $g_1$.]{\includegraphics[scale=0.3]{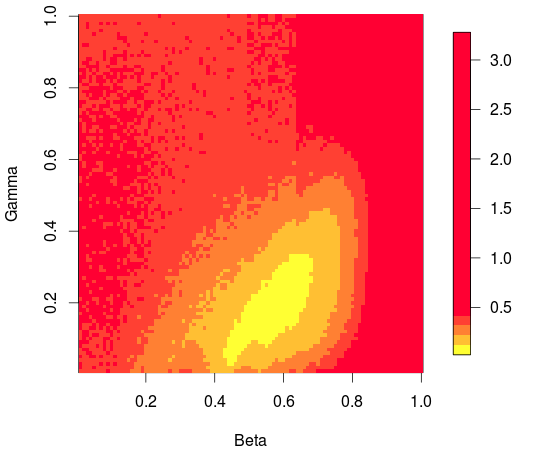}}
\subfloat[MSE, $n=500$, $d=3$, function $g_2$.]{\includegraphics[scale=0.3]{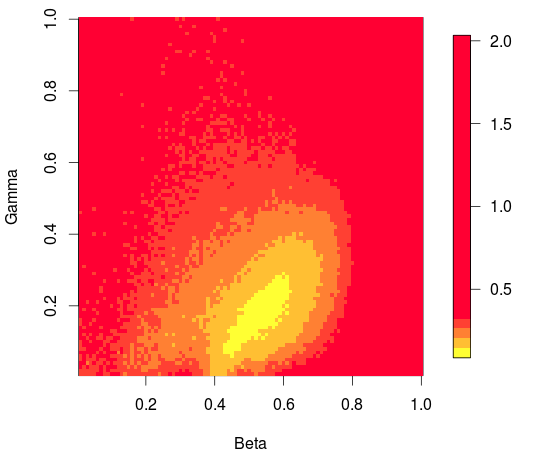}}
\caption{Mean squared error in function of $\beta$ and $\gamma$.}\label{fig:d3}
\end{figure}

\section{Conclusion and perspectives}

In this paper, we proposed a sequential method for the estimation of a conditional quantile of the output of a stochastic code where inputs lie in $\mathbb{R}^d$. We introduced a combination of $k$-nearest neighbors and Robins-Monro estimator. This algorithm has two parameters: the number of neighbors $k_n=\lfloor n^{\beta}\rfloor$ and the learning rate $\gamma_n=n^{-\gamma}$. By deriving a bias-variance decomposition of the risk, we showed that our algorithm is convergent for $\frac{1}{2} < \gamma < \beta < 1$ and we studied its mean squared error non-asymptotic rate of convergence. Moreover, we proved that the choice $\gamma=\frac{1}{1+d}$ and $\beta\gtrsim\gamma$ leads to the best rate of convergence. Numerical simulations show that the algorithm tuned with those theoretically optimal parameters is a powerful and accurate estimator of the conditional quantiles, even in dimension $d>1$. 

The theoretical guarantees are shown under strong technical assumptions, but our algorithm is a general methodology to solve the problem. Relaxing the conditions will be the object of a future work. Moreover, the proof that we propose constrained us to use an artefact parameter $\zeta$ which implies that the non-asymptotic inequality is theoretically true for large values of $n$, even if simulations confirm that this problem does not exist in practice. A second perspective is then to find a better way to prove this inequality for smaller $n$.
Finally, it would be of great interest to derive non-asymptotic lower-bounds for the mean squared error of the algorithm.

\appendix

\section{Technical lemmas and proofs}\label{sec:proofs}
\subsection{Technical lemmas and notation}

For sake of completeness, we start by recall some well-known facts on order statistics. 

\begin{lemma}
\label{Pn}
When $X$ has a density with respect to Lebesgue measure, denoting $P_n=\P( X \in kNN_{n+1}(x) | X_1, \dots X_n)$, we have the following properties

\begin{itemize}
\item[1)] $P_n=F_{||X-x||}\left( ||X-x||_{(k_{n+1},n)} \right)$\;,
\item[2)] $P_n \sim \mathrm{Beta}(k_{n+1}, n-k_{n+1}+1)$\;,
\item[3)] $ \E(P_n)= k_{n+1}/({n+1})$\;,
\item[4)] $\E(P_n^2)=\big(2k_{n+1} n - k_{n+1}^2 + 3k_{n+1} +k_{n+1} n^2 \big) / \big((n+1)^2(n+2)\big)$\;.
\end{itemize}
where we denote $F_{||X-x||}$ the cumulative distribution function of the random vector $||X-x||$, $||X-x||_{(k_{n+1},n)}$ the $k_{n+1}$ order statistic of the sample $(||X_1-x||, \dots, ||X_n-x||)$ and $\mathrm{Beta}(a,b)$ the beta distribution with parameters $a$ and $b$. 
\end{lemma}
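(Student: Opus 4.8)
The plan is to establish each of the four properties in turn, working from the definition $P_n = \P(X \in kNN_{n+1}(x) \mid X_1,\dots,X_n)$. The key observation, which unlocks everything, is that the event $\{X \in kNN_{n+1}(x)\}$ — for a fresh point $X$ independent of the conditioning sample — is exactly the event $\{\|X-x\| \leq \|X-x\|_{(k_{n+1},n)}\}$, and conditionally on $X_1,\dots,X_n$ the order statistic $\|X-x\|_{(k_{n+1},n)}$ is a known constant. Since $X$ is independent of the sample, the conditional probability of this event is simply $F_{\|X-x\|}$ evaluated at that constant threshold, which is property (1).

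**For property (2)**, I would apply the probability integral transform: because $X$ has a density, the radial variable $\|X-x\|$ has a continuous distribution, so $U_i := F_{\|X-x\|}(\|X_i-x\|)$ are i.i.d.\ uniform on $[0,1]$. Since $F_{\|X-x\|}$ is nondecreasing, the $k_{n+1}$-th order statistic of the $\|X_i-x\|$ maps to the $k_{n+1}$-th order statistic of the $U_i$, so $P_n$ from part (1) equals the $k_{n+1}$-th order statistic of a uniform sample of size $n$. The classical fact that the $k$-th uniform order statistic out of $n$ follows a $\beta(k, n-k+1)$ law then gives property (2) directly.

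**Properties (3) and (4) are then pure beta-distribution moment calculations.** Writing $a = k_{n+1}$ and $b = n - k_{n+1} + 1$, the mean of a $\beta(a,b)$ variable is $a/(a+b) = k_{n+1}/(n+1)$, giving (3). For (4) I would use $\E(P_n^2) = \Var(P_n) + (\E P_n)^2$ with $\Var = ab/\big((a+b)^2(a+b+1)\big)$; substituting $a+b = n+1$ and simplifying $\frac{k_{n+1}(n-k_{n+1}+1)}{(n+1)^2(n+2)} + \frac{k_{n+1}^2}{(n+1)^2}$ over the common denominator $(n+1)^2(n+2)$ should reproduce the stated numerator $2k_{n+1}n - k_{n+1}^2 + 3k_{n+1} + k_{n+1}n^2$. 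This last step is just routine algebra and is worth carrying out carefully to match the exact form claimed.

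**The only genuinely substantive point** — the main obstacle, such as it is — is making the conditioning in part (1) rigorous: one must be careful that the symbol $X$ in the event refers to a new draw independent of $(X_1,\dots,X_n)$ (the point being tested for neighbour membership is $X_{n+1}$), and that the order statistic $\|X-x\|_{(k_{n+1},n)}$ is measurable with respect to the conditioning $\sigma$-algebra $\sigma(X_1,\dots,X_n)$, so that it acts as a constant under the conditional probability. Once this is set up cleanly, parts (2)--(4) are standard. The density hypothesis on $X$ is used exactly once, to guarantee continuity of $F_{\|X-x\|}$ so that the integral transform yields genuinely uniform (rather than merely stochastically dominated) variables and the beta law is exact.
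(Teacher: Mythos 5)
Your strategy coincides with the paper's own proof. For (1) the paper uses exactly your observation: conditionally on $X_1,\dots,X_n$ the event $\{X \in kNN_{n+1}(x)\}$ is $\{\|X-x\| \le \|X-x\|_{(k_{n+1},n)}\}$, the threshold is a constant under the conditioning, and independence of the fresh draw gives $P_n = F_{\|X-x\|}(\|X-x\|_{(k_{n+1},n)})$. For (2) the paper likewise uses continuity of $F_{\|X-x\|}$ (which it derives from the density assumption via dominated convergence, where you simply assert it -- both are fine, since spheres have Lebesgue measure zero) together with the quantile transform and the classical beta law of uniform order statistics. The paper does not even write out (3) and (4), treating them as immediate beta-moment consequences of (2).

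There is, however, one concrete problem: the step you deferred as ``routine algebra\dots worth carrying out carefully'' does \emph{not} reproduce the stated numerator, and carrying it out would have revealed this. Your own recipe gives
\[
\E(P_n^2) \;=\; \frac{k_{n+1}(n-k_{n+1}+1)}{(n+1)^2(n+2)} + \frac{k_{n+1}^2}{(n+1)^2}
\;=\; \frac{k_{n+1}(k_{n+1}+1)(n+1)}{(n+1)^2(n+2)}
\;=\; \frac{k_{n+1}(k_{n+1}+1)}{(n+1)(n+2)}\,,
\]
whose numerator over the common denominator $(n+1)^2(n+2)$ is $k_{n+1}^2 n + k_{n+1}^2 + k_{n+1} n + k_{n+1}$, not $2k_{n+1}n - k_{n+1}^2 + 3k_{n+1} + k_{n+1}n^2$. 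The two disagree in general: for $k_{n+1}=1$, $n=2$, so that $P_n \sim \beta(1,2)$, the true second moment is $1/6$, while the lemma's displayed formula gives $10/36 = 5/18$. So item (4) as printed in the paper is itself erroneous (an algebra slip), and your method -- which is the right one -- yields the correct value $\frac{k_{n+1}(k_{n+1}+1)}{(n+1)(n+2)}$ rather than the printed expression. The flaw in your write-up is therefore not the approach but the unverified assertion that the algebra matches the claimed form: as stated, that final step of your proof would fail, because no correct computation can terminate at the paper's formula.
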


\begin{proof}
Conditionally to $X_1, \dots, X_n$, the event \{$X \in kNN_{n+1}(x)$\} is equivalent to the event \{$||X-x|| \leq ||X-x||_{(k_{n+1}, n)}$\}. Then,

\begin{equation*}
\begin{aligned}
P_n & =\P(X \in kNN_{n+1}(x) | X_1 \dots X_n) \\
& = \P_{X} \left(||X-x|| \leq ||X-x||_{(k_{n+1},n)}|X_1 \dots X_n\right) \\
& = F_{||X-x||} \left(||X-x||_{(k_{n+1},n)}\right)\;. \\
\end{aligned}
\end{equation*}
Since $X$ has a density, the cumulative distribution function $F_{||X-x||}$ is continuous. Indeed, using the sequential characterization we get for a sequence $(t_n)$ converging to $t$

\begin{equation*}
\begin{aligned}
F_{||X-x||}(t_n)& = \P(X \in B_d(x, t_n) )\\
& = \int_{\mathbb{R}^d} f(z) \mathbb{1}_{B_d(x, t_n)}(z) \;\mathrm{d}z\;. \\
\end{aligned}
\end{equation*}
Since $f$ is integrable, the Lebesgue theorem allows us to conclude that

\begin{equation*}
\begin{aligned}
\lim_n \int_{\mathbb{R}^d} f(z) \mathbb{1}_{B_d(x, t_n)}(z) \;\mathrm{d}z& = \int_{\mathbb{R}^d} \lim_n f(z) \mathbb{1}_{B_d(x, t_n)}(z) \;\mathrm{d}z= \P(X \in B_d(x, t))\;,\\
\end{aligned}
\end{equation*}
so the cumulative distribution function is continuous. Then thanks to classical result on statistics order and quantile transform (see \cite{order}), we get 

\begin{equation*}
P_n  = F_{||X-x||} \left(||X-x||_{(k_{n+1}, n)}\right) \sim  U_{(k_{n+1},n)} \sim \beta(k_{n+1}, n-k_{n+1}+1) \;,
\end{equation*}
where we denoted $U_{(k_{n+1},n)}$ the $k_{n+1}$ statistic order of a independent sample of size $n$ distributed like a uniform law on $[0, 1]$.  
\end{proof}

Let us now recall some deviation results. 

\begin{lemma}
\label{bern}

We denote $\mathcal{B}(n, p)$ the binomial distribution of parameters $n$ and $p$, for $n \geq 1$ and $p \in [0, 1]$. Then, if $Z \sim \mathcal{B}(n, p)$, we get
\begin{equation*}
\begin{aligned}
\P  \left( \frac{Z}{n} < \frac{p}{2} \right) &  \leq \exp \left( - \frac{3np}{32} \right)\;,\\
\P \left( \frac{Z}{n} > 2p \right) & \leq \exp \left( - \frac{3np}{8} \right)\;.
\end{aligned}
\end{equation*}

\end{lemma}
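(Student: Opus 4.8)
The plan is to treat both inequalities as instances of the classical Chernoff bound: I would apply the exponential Markov inequality to $e^{tZ}$ and then optimise the resulting exponent over $t$. Write $Z = \sum_{i=1}^n X_i$, where $X_1, \dots, X_n$ are i.i.d.\ Bernoulli random variables of parameter $p$, so that for every real $t$ the moment generating function factorises as $\E(e^{tZ}) = (1 - p + pe^t)^n$. Using the elementary inequality $1 + u \leq e^u$ with $u = p(e^t - 1)$, I would first record the single clean bound
\[
\E(e^{tZ}) \leq \exp\!\big(np(e^t - 1)\big), \qquad t \in \mathbb{R}\;,
\]
which is what lets the two tails be handled by the same computation.

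For the upper tail I would apply Markov's inequality to $e^{tZ}$ with $t > 0$: taking the threshold $2np$,
\[
\P(Z \geq 2np) \leq e^{-2npt}\,\E(e^{tZ}) \leq \exp\!\big(np(e^t - 1 - 2t)\big)\;.
\]
Minimising the exponent $e^t - 1 - 2t$ over $t > 0$ gives $t = \ln 2$ and the value $1 - 2\ln 2$, whence $\P(Z/n > 2p) \leq \P(Z \geq 2np) \leq \exp(-(2\ln 2 - 1)np)$. Since $2\ln 2 - 1 \approx 0.386 > 3/8$, the claimed bound $\exp(-3np/8)$ follows a fortiori.

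For the lower tail I would run the same argument with $t < 0$: taking the threshold $np/2$,
\[
\P(Z \leq np/2) \leq e^{-npt/2}\,\E(e^{tZ}) \leq \exp\!\big(np(e^t - 1 - t/2)\big)\;.
\]
Minimising $e^t - 1 - t/2$ over $t < 0$ gives $t = -\ln 2$ and the value $(\ln 2 - 1)/2$, so that $\P(Z/n < p/2) \leq \exp\!\big(-\tfrac{1 - \ln 2}{2}\,np\big)$. As $(1 - \ln 2)/2 \approx 0.153 > 3/32$, the stated bound $\exp(-3np/32)$ again follows.

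I do not expect a serious obstacle here, as the whole argument is the textbook Chernoff computation; the only points requiring care are getting the sign of $t$ right for the lower tail and the final numerical comparison, namely verifying that the exact optimised exponents $2\ln 2 - 1$ and $(1 - \ln 2)/2$ genuinely dominate the cruder constants $3/8$ and $3/32$ appearing in the statement (the latter being deliberately loose, presumably to keep the downstream expressions simple).
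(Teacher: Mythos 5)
Your proof is correct, but it takes a genuinely different route from the paper. The paper's proof is a two-line citation: it writes $Z$ as a normalized sum of i.i.d.\ Bernoulli variables, invokes Bernstein's inequality in the relative-deviation form $\P(Z - p < -\epsilon p) \leq \exp(-3np\epsilon^2/8)$ and $\P(Z - p > \epsilon p) \leq \exp(-3np\epsilon^2/8)$ (Theorem 8.2 of the Devroye--Gy\"orfi--Lugosi reference), and then simply plugs in $\epsilon = 1/2$ for the lower tail and $\epsilon = 1$ for the upper tail, which produces the constants $3/32$ and $3/8$ exactly as stated. You instead carry out the Chernoff computation from scratch: bounding the moment generating function by $\exp(np(e^t-1))$, applying exponential Markov, and optimising over $t$, which yields the sharper exponents $2\ln 2 - 1 \approx 0.386$ and $(1-\ln 2)/2 \approx 0.153$, after which the stated bounds follow since these dominate $3/8 = 0.375$ and $3/32 \approx 0.094$. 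Your computations check out (the optimisers $t = \ln 2$ and $t = -\ln 2$, the sign handling for the lower tail, and both numerical comparisons are right, though the first one, $0.386 > 0.375$, is tight). What each approach buys: the paper's argument is shorter and makes the provenance of the specific constants $3/8$, $3/32$ transparent, at the cost of relying on an external theorem; yours is fully self-contained and in fact proves a strictly stronger statement, at the cost of the final numerical verifications, which a careful reader must check are not accidentally reversed.
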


\begin{proof}
Let $(Z_i)$ be an independent sample of Bernoulli of parameter $p$ and let \[Z =\frac{1}{n} \displaystyle\sum_{k=1}^n Z_i\;.\] We apply the Bernstein's inequality (see for example Theorem 8.2 in~\cite{Devroye}) to conclude that

\begin{equation*}
\begin{aligned}
\P(Z-p< - \zeta p )\leq \exp \left( - \frac{3np \zeta^2}{8}\right)\;,\\
\P(Z-p> \zeta p )\leq \exp \left( - \frac{3np \zeta^2}{8}\right)\;.\\
\end{aligned}
\end{equation*}
The results follow by taking $\zeta= \frac{1}{2}$ in the first case and $\zeta=1$ in the second case.  
\end{proof}

We now give some technical lemma useful to prove our main results. 

\begin{lemma}
\label{lemmeborne}
Suppose $\beta \geq \gamma$. Then, for every $C > 0$, we get

$$\P \left( \displaystyle\sum_{n} \gamma_n \mathbb{1}_{X_n \in kNN_n(x)} \leq C \right) = 0\;.$$
\end{lemma}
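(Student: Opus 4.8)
The plan is to prove the equivalent statement that $S:=\sum_{n}\gamma_n\mathbf{1}_{X_n\in kNN_n(x)}=+\infty$ almost surely; since $S\in[0,\infty]$ and $S=\infty$ forces $S>C$ for every finite $C$, this gives $\P(S\le C)=0$ for all $C$, in particular for $C\ge 3$. Write $\xi_n:=\gamma_n\mathbf{1}_{X_n\in kNN_n(x)}$ and $\mathcal{G}_n:=\sigma(X_1,\dots,X_n)$, so that $\xi_n$ is $\mathcal{G}_n$-measurable with $0\le\xi_n\le\gamma_n\le 1$. First I would form the martingale $M_n:=\sum_{j=1}^n\big(\xi_j-\E(\xi_j\mid\mathcal{G}_{j-1})\big)$, whose increments satisfy $|\Delta M_j|\le\gamma_j\le 1$. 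A classical dichotomy for martingales with uniformly bounded increments says that, almost surely, either $M_n$ converges to a finite limit or $\limsup M_n=+\infty$ and $\liminf M_n=-\infty$; in particular $\{M_n\to-\infty\}$ is negligible. Consequently, on the event $\{S<\infty\}$ the relation $M_n=\sum_j\xi_j-\sum_j\E(\xi_j\mid\mathcal{G}_{j-1})$ forces $\sum_j\E(\xi_j\mid\mathcal{G}_{j-1})<\infty$ (otherwise $M_n\to-\infty$ on a set of positive measure). Thus it suffices to prove that $\sum_n\E(\xi_n\mid\mathcal{G}_{n-1})=+\infty$ almost surely.

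Next I would compute these conditional expectations. By the definition of the neighbourhood event, $\{X_n\in kNN_n(x)\}=\{\|X_n-x\|\le\|X-x\|_{(k_n,n-1)}\}$, so $\E(\xi_n\mid\mathcal{G}_{n-1})=\gamma_n\,p_n$ with $p_n:=F_{\|X-x\|}\big(\|X-x\|_{(k_n,n-1)}\big)$. Exactly as in Lemma \ref{Pn}, the quantile transform gives $p_n\overset{\mathcal{L}}{=}U_{(k_n,n-1)}\sim\beta(k_n,n-k_n)$, of mean $k_n/n$. To turn this into an almost sure lower bound I would control the lower tail via the Beta--Binomial identity $\{U_{(k_n,n-1)}\le t\}=\{\mathcal{B}(n-1,t)\ge k_n\}$ and Lemma \ref{bern} with $t$ of order $k_n/n$, obtaining $\P\big(p_n<\tfrac{1}{2}\tfrac{k_n}{n}\big)\le\exp(-c\,k_n)$ for some $c>0$. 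Since $k_n=\lfloor n^\beta\rfloor\to\infty$, these probabilities are summable, and the Borel--Cantelli lemma yields that almost surely $p_n\ge\tfrac{1}{2}\tfrac{k_n}{n}$ for all $n$ large enough, say $n\ge N$. Hence, almost surely,
\[
\sum_n\E(\xi_n\mid\mathcal{G}_{n-1})=\sum_n\gamma_n p_n\ge\frac{1}{2}\sum_{n\ge N}n^{-\gamma}\frac{\lfloor n^\beta\rfloor}{n},
\]
and since $\lfloor n^\beta\rfloor/n\ge\tfrac{1}{2}n^{\beta-1}$ for $n$ large, the right-hand side dominates a positive multiple of $\sum_{n\ge N}n^{\beta-\gamma-1}$, which diverges because $\beta\ge\gamma$ forces $\beta-\gamma-1\ge-1$. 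This is precisely the divergence required in the first paragraph.

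The main obstacle is the second paragraph: passing from the trivial divergence of $\sum_n\E(p_n)=\sum_n k_n/n$ to the almost sure divergence of the random series $\sum_n\gamma_n p_n$. This requires the quantitative lower-tail control of the Beta order statistic $p_n$ through the Beta--Binomial correspondence and Lemma \ref{bern}, and it is here that the hypothesis $k_n\to\infty$ (i.e. $\beta>0$) is used, to make the deviation probabilities summable. The martingale dichotomy of the first paragraph is the other key ingredient, since it lets us replace the intractable series $\sum_n\xi_n$ by its predictable compensator without any independence among the increments. The assumption $C\ge 3$ plays no role in the divergence itself, which shows $S=\infty$ regardless of the threshold.
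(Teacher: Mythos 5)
Your proof is correct, and it takes a genuinely different route from the paper's. The paper first reduces, via the quantile transform, to the case $X\sim\mathcal{U}([0,1])$, then replaces the nearest-neighbour indicator by the smaller indicator $\mathbf{1}_{|X_n-x|\le k_n/(4n)}$, which is legitimate on a high-probability event $\Omega$ on which at most $k_n$ sample points fall within distance $k_n/(4n)$ of $x$ (controlled by Lemma \ref{bern}). The point of that surgery is that the new indicators are \emph{independent}, so the paper can partition time into blocks $I_k$ over which the expected sum lies in $[2C,2C+1]$, apply Chebyshev on each block to get a failure probability at most $(2C+1)/C^2\le 7/9<1$, and multiply over blocks; this is exactly where the hypothesis $C\ge 3$ enters. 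You instead keep the dependent indicators and handle them with a compensator argument: the conditional (L\'evy) version of Borel--Cantelli, obtained from the dichotomy for martingales with bounded increments, reduces everything to the a.s.\ divergence of $\sum_n\gamma_n p_n$, where $p_n=\P\left(X_n\in kNN_n(x)\mid X_1,\dots,X_{n-1}\right)$; you then use the exact Beta law of $p_n$ from Lemma \ref{Pn}, the Beta--Binomial identity and Lemma \ref{bern} to get $p_n\ge k_n/(2n)$ eventually a.s.\ by Borel--Cantelli, and conclude from $\beta\ge\gamma$. Both proofs consume the same elementary ingredients (Lemma \ref{bern}, Borel--Cantelli, divergence of $\sum n^{\beta-\gamma-1}$), but your route buys three things: it proves the stronger statement $\sum_n\gamma_n\mathbf{1}_{X_n\in kNN_n(x)}=+\infty$ a.s., showing that the restriction $C\ge 3$ is an artefact of the paper's Chebyshev step; it works directly for inputs in $\mathbb{R}^d$, avoiding the reduction to the one-dimensional uniform case (which is delicate to justify for $d>1$, even though Lemma \ref{borne} invokes the present lemma in general dimension); and it reuses the paper's own Lemma \ref{Pn} instead of an ad hoc localization. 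The price is a heavier classical tool (the bounded-increment martingale dichotomy), and, like Lemma \ref{Pn}, your argument needs $X$ to have a density so that $F_{\|X-x\|}$ is continuous --- an assumption implicit in how the lemma is used, but worth stating explicitly.
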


\begin{proof}
Let us denote $F$ the cumulative function distribution of $||X_n - x||$ and $ U_n = F(||X_n - x||)$, we get
\[\displaystyle\sum_{n} \gamma_n \mathbb{1}_{X_n \in kNN_n(x)}=\sum_{n} \gamma_n \mathbb{1}_{U_n \in kNN_n(0)}\;.\]
Hence, it is enough to show the desired result for $x=0$ and $X_j=U_j\sim \mathcal{U} \left( [0, 1] \right)$.

Let $\zeta$ be a positive real number. Let $N$ be an integer such that 
\begin{equation}
\label{zeta}
\begin{aligned}
\displaystyle\sum_{n \geq N} \exp \left( - \frac{3k_n}{16} \right) \leq \zeta\;.
\end{aligned}
\end{equation}
We set \[\Omega:= \left\lbrace \forall n \geq N, \, \, \displaystyle\sum_{j=1}^n \mathbb{1}_{U_j \leq \frac{k_n}{2n}} \leq k_n \right\rbrace \;.\]
On this event, for every $n \geq N$, there are at most $k_n$ elements $U_i$ such that $U_i$ is inferior to $\frac{k_n}{2n}$. Thus, if an element satisfies $U_j \leq \frac{k_n}{2n}$, it belongs to the $k_n$-nearest neighbors of $0$. Then, defining $Z_n:= \displaystyle\sum_{j=1}^n \mathbb{1}_{U_j \leq \frac{k_n}{2n}} \sim \mathcal{B}\left(n, \frac{k_n}{2n}\right)$,
\begin{equation}
\label{complem}
\begin{aligned}
\P \left( \overline{\Omega} \right) &\leq \displaystyle\sum_{n \geq N} \P \left( \displaystyle\sum_{j=1}^n \mathbb{1}_{U_j \leq \frac{k_n}{2n}} > k_n \right) \\
& \leq \displaystyle\sum_{n \geq N} \P \left( \frac{Z_n}{n} > \frac{k_n}{n} \right)\\
& \leq \displaystyle\sum_{n \geq N} \exp \left( - \frac{3k_n}{16} \right) \leq \zeta\; .\\
\end{aligned}
\end{equation}
by using the second inequality of Lemma \ref{bern} and Equation (\ref{zeta}). But, as we noticed above, on the event $\Omega$, we have
$
\mathbb{1}_{U_n \in kNN_n(0)} \geq \mathbb{1}_{U_n \leq \frac{k_n}{2n}}$; and thus
\begin{equation}
\label{inter}
\begin{aligned}
 \P \left( \Omega \cap \displaystyle\sum_{n \geq N} \gamma_n \mathbb{1}_{U_n \in kNN_n(0)} \leq C \right)\leq \P\left( \displaystyle\sum_{n \geq N} \gamma_n \mathbb{1}_{U_n \leq \frac{k_n}{2n}} \leq C \right) \;. \\
\end{aligned}
\end{equation}
Let now $(I_k)_k$ be a partition of $\llbracket N, + \infty \llbracket$ such that

\begin{equation*}
\begin{aligned}
\forall k \geq 1, \, \, \displaystyle\sum_{n \in I_k} \gamma_n \frac{k_n}{2n} \in [2C, 2C+1]\; .\\
\end{aligned}
\end{equation*}
Such a partition exists since, as $\beta \geq \gamma$, the sum $\displaystyle\sum_{n}\gamma_n \frac{k_n}{n}$ is divergent. Then, 
\begin{equation*}
\E \left[ \displaystyle\sum_{n\in I_k} \gamma_n \mathbb{1}_{U_n \leq \frac{k_n}{2n}} \right] = \displaystyle\sum_{n\in I_k}  \gamma_n \E\left(\mathbb{1}_{U_n \leq \frac{k_n}{2n}} \right)
 = \displaystyle\sum_{n\in I_k}  \gamma_n \frac{k_n}{2n}\; ,\\ \geq 2C \;.
\end{equation*}
and by independence, and since the variance of a Bernoulli variable is upper-bounded by its expectation,
\begin{equation*}
\Var\left[ \displaystyle\sum_{n \in I_k} \gamma_n \mathbb{1}_{U_n\leq \frac{k_n}{n}} \right] \leq \E \left[ \displaystyle\sum_{n \in I_k} \gamma_n \mathbb{1}_{U_n\leq \frac{k_n}{2n}} \right]  = \sum_{n \in I_k} \gamma_n \frac{k_n}{2n} \leq 2C+1\;.
\end{equation*}
Chebyshev's inequality yields:
\begin{equation*}
\begin{aligned}
\P \left( \displaystyle\sum_{n \in I_k} \gamma_n \mathbb{1}_{U_n\leq \frac{k_n}{2n}} \leq C \right) & \leq 
\P \left( \E\Big[\displaystyle\sum_{n \in I_k} \gamma_n \mathbb{1}_{U_n\leq \frac{k_n}{2n}}\Big] -\displaystyle\sum_{n \in I_k} \gamma_n \mathbb{1}_{U_n\leq \frac{k_n}{2n}} \geq 2C-C \right)\\ & \leq 
 \frac{2C+1}{C^2} \leq \frac{7}{9}
\end{aligned}
\end{equation*}
since $C \geq 3$. Thus, 
\begin{equation*}
\begin{aligned}
\P \left( \bigcap_{k} \left\lbrace \displaystyle\sum_{n \in I_k} \gamma_n \mathbb{1}_{U_n \leq \frac{k_n}{2n}} \leq C\right\rbrace\right) =0\;.\\
\end{aligned}
\end{equation*} 
and hence
\begin{equation}
\label{last}
\P \left( \displaystyle\sum_{n \geq N} \gamma_n \mathbb{1}_{U_n\leq \frac{k_n}{2n}} \leq C \right) =0\; .
\end{equation}
Thanks to (\ref{complem}), (\ref{inter}) and (\ref{last}), we get

\[\P \left( \displaystyle\sum_{n} \gamma_n \mathbb{1}_{U_n \in kNN_n(0)} \leq C \right)\leq \P \left( \displaystyle\sum_{n \geq N} \gamma_n \mathbb{1}_{U_n \in kNN_n(0)} \leq C \right) \leq \P(\overline{\Omega}) + 0 \leq \zeta \;,\]
which holds for all $\zeta >0$.
\end{proof}

\begin{lemma}
\label{dev}
Denoting $A_n$ the event $\{X_1, \dots, X_{n} \, \, | \, \,  P_{n} > \zeta_{n}\}$ where $\zeta_{n}= \frac{1}{n^{\zeta}}$ and the parameter $\zeta$ satisfies $1>\zeta > 1 - \beta$, we have for $n\geq 2^{1/(\zeta-(1-\beta))}$,

\[\P(A_n^C) \leq \exp \left( - \frac{3n^{1- \zeta}}{8} \right)\;.\]
\end{lemma}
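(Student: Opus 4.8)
The plan is to reduce the lower-tail event $A_n^C=\{P_n\le\epsilon_n\}$ to a binomial upper-tail event and then invoke the deviation bound of Lemma \ref{bern}. First I would recall from Lemma \ref{Pn} that $P_n\sim\beta(k_{n+1},n-k_{n+1}+1)$, that is, $P_n$ has the law of the $k_{n+1}$-th order statistic $U_{(k_{n+1},n)}$ of an i.i.d.\ sample of size $n$ uniform on $[0,1]$. The key observation is the duality between order statistics and counts: for any $t\in[0,1]$, the event $\{U_{(k_{n+1},n)}\le t\}$ is exactly the event that at least $k_{n+1}$ of the $n$ uniforms fall below $t$. Applying this with $t=\epsilon_n$ gives
\[ \P(A_n^C) = \P\left( U_{(k_{n+1},n)} \le \epsilon_n \right) = \P\left( \mathcal{B}(n, \epsilon_n) \ge k_{n+1} \right). \]

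Next I would argue that this is a genuine upper-tail event. The mean of $\mathcal{B}(n,\epsilon_n)$ is $n\epsilon_n\approx (n+1)^{1-\epsilon}$, while the threshold is $k_{n+1}=\lfloor (n+1)^\beta\rfloor$; since the hypothesis $\epsilon>1-\beta$ forces $1-\epsilon<\beta$, the threshold eventually dominates the mean, so we are indeed bounding a deviation above the mean. To reach the exponent $3(n+1)^{1-\epsilon}/8$ with the factor $n+1$, I would first use the stochastic domination $\mathcal{B}(n,\epsilon_n)\preceq\mathcal{B}(n+1,\epsilon_n)$ (adding one Bernoulli trial can only increase the count), whence
\[ \P(A_n^C) \le \P\left( \mathcal{B}(n+1, \epsilon_n) \ge k_{n+1} \right). \]

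Then I would verify the numerical condition $k_{n+1}\ge 2(n+1)\epsilon_n$, equivalently $\lfloor (n+1)^\beta\rfloor\ge 2(n+1)^{1-\epsilon}$. Since $\epsilon-(1-\beta)>0$, this reduces (up to the floor correction) to $(n+1)^{\epsilon-(1-\beta)}\ge 2$, i.e.\ it holds once $n$ is large enough --- this is precisely the rank at which the constant $N_0=2^{1/(\epsilon-(1-\beta))}$ of Theorem \ref{ineg} originates. On that range the inclusion $\{\mathcal{B}(n+1,\epsilon_n)\ge k_{n+1}\}\subseteq\{\mathcal{B}(n+1,\epsilon_n)>2(n+1)\epsilon_n\}$ holds, and the second inequality of Lemma \ref{bern} applied with $p=\epsilon_n$ and $n+1$ trials gives
\[ \P\left( \mathcal{B}(n+1,\epsilon_n) > 2(n+1)\epsilon_n \right) \le \exp\left( -\frac{3(n+1)\epsilon_n}{8} \right) = \exp\left( -\frac{3(n+1)^{1-\epsilon}}{8} \right), \]
using $\epsilon_n=(n+1)^{-\epsilon}$, which is the claim.

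The hard part will be the control of the threshold condition $k_{n+1}\ge 2(n+1)\epsilon_n$: the clean upper-tail argument genuinely needs this inequality, which fails for the smallest values of $n$ (where the stated exponent is anyway close to a constant). I would either restrict to the regime where $N_0$ makes the condition valid or check the finitely many small ranks by hand, since the order-statistics/binomial duality and Lemma \ref{bern} then do the rest automatically.
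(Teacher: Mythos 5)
Your proposal is correct and follows essentially the same route as the paper: both reduce the lower-tail event for the Beta-distributed $P_n$ to the binomial upper-tail event $\{\mathcal{B}(n,\epsilon_n)\ge k_{n+1}\}$ (the paper via the incomplete Beta/binomial identity, you via the equivalent order-statistics counting duality), then apply the second inequality of Lemma \ref{bern} under the threshold condition $k_{n+1}\ge 2(n+1)\epsilon_n$, valid for $n\ge 2^{1/(\epsilon-(1-\beta))}$. Your closing caveat is well taken, and in fact applies equally to the paper's own proof: despite the statement's ``for $n\ge 1$'', the argument only yields the bound beyond the rank $N_0$, which is precisely why $N_0$ appears in Theorem \ref{ineg}.
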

\begin{proof}
Thanks to the Lemma \ref{Pn}, we obtain
\begin{equation*}
\begin{aligned}
\P(A_n^C) & = \P( \beta(k_{n+1}, n-k_{n+1}+1) \leq  \zeta_n) \\
& =  I_{\zeta_n}(k_{n+1}, n-k_{n+1}+1)\;, \\
\end{aligned}
\end{equation*}
where we denote $I_{\zeta}$ the incomplete $\beta$ function. A classical result (see \cite{Abra}) allows us to write this quantity in terms of the binomial distribution 
\begin{equation*}
\begin{aligned}
\P(A_n^C) & = \P( \mathcal{B}(n, \zeta_n) \geq k_{n+1}) \;.\\
\end{aligned}
\end{equation*}
Thanks to Lemma \ref{bern}, we know that

\[\P( \mathcal{B}(n, \zeta_n) \geq k_{n+1}) \leq \exp \left( - \frac{3n\zeta_{n}}{8} \right) \leq \exp \left(- \frac{3n^{1-\zeta}}{8} \right) \;,\]
as soon as $k_{n+1}/n \geq 2 \zeta_n$, which is true as soon as $n \geq 2^{1/(\zeta-(1-\beta))}$ because $\zeta > 1- \beta$.

%We now use the Cramer's method to study the deviation of the Binomial distribution (see \cite{zeitu})
%
%
%\begin{equation*}
%\begin{aligned}
%\P \left( \frac{\mathcal{B}(n, \zeta_n)}{n}  \geq \frac{k_n}{n} \right) & \leq \exp \left( -n \left( \frac{k_{n+1}}{n} \log \left( \frac{k_{n+1}}{n \zeta_n} \right) + \left( 1 - \frac{k_{n+1}}{n} \right) \log \left( \frac{1 - \frac{k_{n+1}}{n}}{1 - \zeta_n}\right) \right) \right) \\
%& = \left( \frac{n \zeta_n}{k_{n+1}} \right)^{k_{n+1}} \left( \frac{1 - \zeta}{1 - \frac{k_{n+1}}{n}} \right)^{n-k_{n+1}}\;.\\
%\end{aligned}
%\end{equation*}
%
%Then, since $\zeta_n=n^{- \zeta}$ and $k_n\sim n^{\beta}$, computations give us 
%
%\begin{equation*}
%\begin{aligned}
%\log \left( \P(A_n^C) \right)&  = n^{\beta} \left( 1 - \zeta + \beta \right) \log(n) + (n-n^{\beta} \log \left( 1- \frac{n^{\beta-1}-n^{-\zeta}}{1-n^{\beta-1}} \right)  \\
%& \leq n^{\beta} \left( 1 - \zeta + \beta \right) \log(n) +n^{\beta}-n^{-\zeta+1} \\
%& = \mathcal{O} \left( -n^{\beta} \log(n) \right) \;.\\
%\end{aligned}
%\end{equation*}
%and the result follows. 
\end{proof}
%
%\begin{definition}
%\label{intervalle}
%Let $B_n^{k_n}(x)$ be the ball centered in $x$ such that
%\begin{equation*}
%\begin{aligned}
%\P(X \in kNN_n(x) | X_1 \dots X_n)= \P(X \in B_n^{k_n}(x)), \\
%\end{aligned}
%\end{equation*}
%in fact
%$$B_n^{k_n}(x)=B_{||.||_d}(x, ||X-x||_{(k_n, n)}).$$
%
%\end{definition}
%
%

\begin{lemma}
\label{ps}
Under hypothesis of Theorem \ref{cvas}, $||X-x||_{(k_{n+1},n)}$ converges almost surely to 0. 
\end{lemma}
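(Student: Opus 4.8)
The plan is to route everything through Lemma \ref{Pn}, which tells us that $P_n = F_{||X-x||}\big(||X-x||_{(k_{n+1},n)}\big)$ and that $P_n \sim \beta(k_{n+1}, n-k_{n+1}+1)$. Since the radius $||X-x||_{(k_{n+1},n)}$ is recovered from $P_n$ through the (continuous, non-decreasing) cumulative distribution function $F_{||X-x||}$, it is enough to first prove that $P_n \to 0$ almost surely and then to transfer this convergence back to the radius. This decouples the two genuinely different ingredients: a concentration argument on a Beta random variable, and a positivity/monotonicity argument tied to the fact that $x \in \supp(X)$.

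For the first step I would fix $\eta>0$ and use the quantile-transform duality between uniform order statistics and the binomial law already invoked in Lemma \ref{Pn} (see \cite{order}), writing $\P\big(P_n > \eta\big) = \P\big(\mathcal{B}(n,\eta) < k_{n+1}\big)$. Because $\beta<1$ we have $k_{n+1} = \lfloor (n+1)^\beta \rfloor = o(n)$, so there is a rank beyond which $k_{n+1} \le n\eta/2$; on that range the event $\{\mathcal{B}(n,\eta) < k_{n+1}\}$ is contained in $\{\mathcal{B}(n,\eta)/n < \eta/2\}$, whose probability the first inequality of Lemma \ref{bern} bounds by $\exp(-3n\eta/32)$. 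As this is summable in $n$, the Borel--Cantelli lemma gives $\P(P_n > \eta \text{ infinitely often}) = 0$, and intersecting over a countable sequence $\eta \downarrow 0$ yields $P_n \to 0$ almost surely.

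For the second step, since $x \in \supp(X)$ every ball centred at $x$ of positive radius carries positive mass, so $F_{||X-x||}(r) > 0$ for all $r>0$, and $F_{||X-x||}$ is non-decreasing. On the almost sure event where $P_n \to 0$, suppose the radius did not converge to $0$: there would be $r_0>0$ and a subsequence along which $||X-x||_{(k_{n+1},n)} \ge r_0$, whence by monotonicity $P_n \ge F_{||X-x||}(r_0) > 0$ along that subsequence, contradicting $P_n \to 0$. Therefore $||X-x||_{(k_{n+1},n)} \to 0$ almost surely.

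The only non-routine point is the tail estimate of the first step: one must control the probability that $P_n$ is \emph{large}, i.e.\ that fewer than $k_{n+1}$ of the $n$ sample points fall in a fixed-mass ball, which is the opposite direction to the lower-deviation bound of Lemma \ref{dev}. Here the hypothesis $\beta<1$ is precisely what forces $k_{n+1}/n \to 0$ and makes the binomial lower-tail bound of Lemma \ref{bern} both applicable and summable; the extra assumption $\gamma \le \beta$ of Theorem \ref{cvas} plays no role in this particular statement.
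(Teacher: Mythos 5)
Your proof is correct, and it reaches the conclusion by a genuinely (if mildly) different organization than the paper's. The paper argues directly on the radius: for a fixed $u>0$ it uses Assumption \textbf{A2} to lower-bound the mass of the ball of radius $u$ centred at $x$ by $q_u=C_{input}C_4(d)u^d$, writes $\P\left(||X-x||_{(k_{n+1},n)}>u\right)$ as a binomial lower-tail probability $\P\left(\mathcal{B}(n,p_u)<k_{n+1}\right)\le \P\left(\mathcal{B}(n,q_u)<k_{n+1}\right)$, and concludes with the first inequality of Lemma \ref{bern} (applicable once $k_{n+1}/n<q_u/2$, which $\beta<1$ guarantees) followed by Borel--Cantelli. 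You uniformize first: via Lemma \ref{Pn} you prove $P_n\to 0$ almost surely using the very same binomial lower-tail bound and Borel--Cantelli, and then transfer the convergence back to the radius by monotonicity of $F_{||X-x||}$ together with $F_{||X-x||}(r_0)>0$ for every $r_0>0$, which is exactly the statement that $x\in\supp(X)$. The concentration engine is identical in both proofs (Lemma \ref{bern} plus summability coming from $k_{n+1}=o(n)$, i.e.\ $\beta<1$). What your factorization buys: the deviation step becomes distribution-free --- a statement about uniform order statistics, requiring only that $X$ has a density so that $F_{||X-x||}$ is continuous --- and \textbf{A2} enters only qualitatively, through positivity of the mass of balls around a support point, rather than through the explicit bound $C_{input}C_4(d)u^d$; your argument would thus survive with \textbf{A2} weakened to ``$x$ lies in $\supp(X)$''. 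What the paper's variant buys: explicit constants in the exponential tail, $\exp\left(-3nq_u/32\right)$, in keeping with the non-asymptotic spirit of the paper, and a ball-mass estimate that is reused quantitatively in Lemma \ref{unif}. Your closing remark is also accurate: only $\beta<1$ is used, and the condition $\gamma\le\beta$ of Theorem \ref{cvas} plays no role in this lemma, in either proof.
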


\begin{proof}
Let $u$ be a positive number. 
\begin{equation}
\begin{aligned}
\label{pu}
p_u : & = \P(X \in \mathcal{B}(x, u) )  = \int_{\mathcal{B}(x, u)} f(t) dt  \\
& \geq \mu_X \left( \mathcal{B}(x, u) \right)  = C_1 \frac{\pi^{\frac{d}{2}}}{\Gamma(\frac{d}{2}+1)} \\
& = C_{input} C_4(d)u^d =: q_u\;.\\
\end{aligned}
\end{equation}
Let $Z$ be a random variable of law $\mathcal{B}(n, p_u)$. Since $||X-x||_{(k_{n+1},n)} >u$ implies that there are at the most $k_{n+1}$ elements of the sample which satisfy $X \in \mathcal{B}(x, q_u)$, we get :
\begin{equation*}
\begin{aligned}
\P( ||X-x||_{(k_{n+1},n)} >u) = \P( Z < k_{n+1})\;. \\
\end{aligned}
\end{equation*}
Thanks to equation $\eqref{pu}$, and denoting $\tilde{Z}$ a random variable of law $\mathcal{B}(n, q_u)$, we have

\begin{equation*}
\begin{aligned}
\P( ||X-x||_{(k_{n+1})} >u) \leq \P( \tilde{Z} < k_{n+1})\;. \\
\end{aligned}
\end{equation*}
Lemma \ref{bern} implies that $\P( ||X-x||_{(k_{n+1})} >u)$ is the general term of a convergent sum. Indeed, when $n$ is large enough, then $k_{n+1}/n < q_u/2$ because $k_{n+1}/n$ converges to 0 ($\beta<1$). The Borel-Cantelli Lemma then implies that $||X-x||_{(k_{n+1},n)}$ converges almost surely to 0. 
\end{proof}

\begin{lemma}
\label{unif}
With the same notation as above, 

\[\E(P_n ||X-x||_{(k_{n+1},n)}  ) \leq C_3(d) \left( \frac{k_{n+1}}{n+1} \right)^{1+\frac{1}{d}}\;.\]
\end{lemma}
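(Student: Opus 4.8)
The plan is to combine two ingredients already at hand: the lower bound on the density of $X$ coming from Assumption \textbf{A2}, which converts the radius $||X-x||_{(k_{n+1},n)}$ into a quantity controlled by $P_n$, and the exact $\beta$ law of $P_n$ from Lemma \ref{Pn}, which turns the remaining expectation into a ratio of Gamma functions.

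First I would obtain a pathwise comparison between the radius and $P_n$. By Lemma \ref{Pn}, $P_n = F_{||X-x||}\big(||X-x||_{(k_{n+1},n)}\big)$, where $F_{||X-x||}(t) = \P(X \in \mathcal{B}(x,t))$. The lower bound \eqref{pu} established in the proof of Lemma \ref{ps} gives $F_{||X-x||}(t) \geq C_{input}C_4(d)\,t^d$ for every $t \geq 0$, so evaluating at $t = ||X-x||_{(k_{n+1},n)}$ yields
\[P_n \geq C_{input}C_4(d)\,||X-x||_{(k_{n+1},n)}^{\,d}\qquad\text{almost surely.}\]
Solving for the radius, $||X-x||_{(k_{n+1},n)} \leq (C_{input}C_4(d))^{-1/d}\,P_n^{1/d}$, and multiplying by $P_n$ gives the pointwise inequality
\[P_n\,||X-x||_{(k_{n+1},n)} \leq (C_{input}C_4(d))^{-1/d}\,P_n^{\,1+\frac{1}{d}}.\]

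Taking expectations then reduces the statement to bounding the moment $\E\big(P_n^{\,1+1/d}\big)$. Since $P_n \sim \beta(k_{n+1},\,n-k_{n+1}+1)$ by Lemma \ref{Pn}, writing $s = 1+\frac1d$ I would use the closed form for the moments of a Beta variable,
\[\E\big(P_n^{\,s}\big) = \frac{\Gamma(k_{n+1}+s)}{\Gamma(k_{n+1})}\cdot\frac{\Gamma(n+1)}{\Gamma(n+1+s)},\]
and show this is of order $(k_{n+1}/(n+1))^{s}$. Concretely, the log-convexity of $\Gamma$ (equivalently, monotonicity of the digamma function) gives the non-asymptotic bounds $\Gamma(k_{n+1}+s)/\Gamma(k_{n+1}) \leq (k_{n+1}+s)^{s}$ and $\Gamma(n+1)/\Gamma(n+1+s) \leq (n+1)^{-s}e^{s/(n+1)}$; once $k_{n+1}\geq 2$ one has $(k_{n+1}+s)^{s} \leq 2^{s}k_{n+1}^{s}$ and the exponential factor is bounded by $e^{s}$, so the product is at most a purely dimensional constant times $(k_{n+1}/(n+1))^{s}$. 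Absorbing $(C_{input}C_4(d))^{-1/d}$ together with this dimensional constant into $C_3(d)$ closes the argument.

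The main obstacle is this last step: I need genuine two-sided control of the Gamma quotient that is uniform in $n$ and in the admissible values of $k_{n+1}$, not merely the classical asymptotics $\Gamma(y+s)/\Gamma(y)\sim y^{s}$. Care is required so that the comparison $(k_{n+1}+s)^{s}\leq 2^{s}k_{n+1}^{s}$ and the factor $e^{s/(n+1)}$ remain bounded for all $n$, since it is precisely here that the explicit value of the constant $C_3(d)$ is produced.
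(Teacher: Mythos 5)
Your proof is correct, but it takes a genuinely different route from the paper's. The paper proceeds through an exact order-statistics identity: writing $\E(P_n ||X-x||_{(k_{n+1},n)})$ as an integral against the density of the $k_{n+1}$-th order statistic, the extra factor $\tilde{F}(y)$ raises the index, giving $\E(P_n ||X-x||_{(k_{n+1},n)})=\frac{k_{n+1}}{n+1}\E\left(||X-x||_{(k_{n+1}+1,n+1)}\right)$; it then bounds this last expectation by $\int_0^{U_{|.|}}\P(||X-x||_{(k_{n+1}+1,n+1)}>u)\,du$, splitting the integral at $C_{10}(d)\asymp (k_{n+1}/(n+1))^{1/d}$ and killing the tail with the binomial deviation bound of Lemma \ref{bern}. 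You instead convert Assumption \textbf{A2} into the pathwise domination $P_n=\tilde{F}\left(||X-x||_{(k_{n+1},n)}\right)\geq C_{input}C_4(d)\,||X-x||_{(k_{n+1},n)}^{d}$, which reduces everything to the single Beta moment $\E\big(P_n^{1+1/d}\big)$, computed in closed form from Lemma \ref{Pn} and estimated by Wendel-type Gamma-ratio inequalities. Your route is more elementary — it avoids both the index-shifting identity and the exponential tail integration, and concentrates all the probabilistic content in the Beta law of $P_n$ — while the paper's route produces the specific, somewhat smaller constant $C_3(d)$ recorded in the Appendix; your constant is larger but of the same nature (explicit, depending only on $d$ and $C_{input}$). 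Three minor caveats: (i) your claim that $F_{||X-x||}(t)\geq C_{input}C_4(d)t^d$ holds \emph{for every} $t\geq 0$ cannot be literally true (the left side is at most $1$), but you only need it at $t=||X-x||_{(k_{n+1},n)}\leq U_{|.|}$, and there it holds in exactly the sense in which the paper itself invokes \eqref{pu}, so the two proofs share the same implicit assumption on the mass of small balls; (ii) the restriction $k_{n+1}\geq 2$ should be removed by writing $k_{n+1}+s\leq 3k_{n+1}$, valid for all $k_{n+1}\geq 1$ since $s=1+\frac{1}{d}\leq 2$, at the price of $3^s$ in place of $2^s$; (iii) since the statement quotes the appendix constant $C_3(d)$, you should either enlarge that constant to cover your bound or note that the lemma is only used downstream through the rate $(k_{n+1}/(n+1))^{1+\frac{1}{d}}$ with some explicit dimensional constant, so this substitution is harmless.
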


\begin{proof}

Let us denote $\tilde{F}$ and $\tilde{f}$ the cumulative and density distribution function of the law of $||X-x||$. 
\begin{equation*}
\begin{aligned}
\E(||X-x||_{(k_{n+1}, n)} P_n) & = \E \left( ||X-x||_{(k_{n+1}, n)} \tilde{F} \left( ||X-x||_{(k_{n+1}, n)} \right) \right) \\
&= \int y \tilde{F}(y) f_{||X-x||_{(k_{n+1}, n)}}(y) dy\;, \\
\end{aligned}
\end{equation*}
with

\begin{equation*}
\begin{aligned}
f_{|X-x|_{(k_{n+1}, n)}}(y)& = \frac{n!}{(k_{n+1}-1)!(n-k_{n+1})!} \tilde{F}(y)^{k_{n+1}-1}\left(1- \tilde{F}(y) \right)^{n-k_{n+1}} \tilde{f}(y)\;. \\
\end{aligned}
\end{equation*}
Then we get

\begin{equation*}
\begin{aligned}
\E(||X-x||_{(k_{n+1}, n)} P_n) & = \int y \tilde{F}(y)^{k_{n+1}} (1 - \tilde{F}(y))^{n-k_{n+1}} \tilde{f}(y) \frac{n!}{(k_{n+1}-1)!(n-k_{n+1})!} \\
&= \frac{k_{n+1}}{n+1} \E \left(||X-x||_{(k_{n+1}+1, n+1)} \right)\;. \\
\end{aligned}
\end{equation*}
We denote $U_{|.|}$ the upper bound of the support of $||X-x||$, and write

\begin{equation*}
\begin{aligned}
\E(||X-x||_{(k_{n+1}+1, n+1)} ) = \int_{0}^{U_{|.|}} \P( ||X-x||_{(k_{n+1}+1, n+1)} >u) du\;.
\end{aligned}
\end{equation*}
Using same arguments that in Lemma \ref{cvas}, denoting $C_{10}(d)=\sqrt[d]{\frac{2(k_{n+1}+1)}{(n+1)C_{input}C_4(d)}}$,we get
 
 \begin{small}
 \begin{equation*}
 \begin{aligned}
 I:=\int_{0}^{U_{|.|}} \P( ||X-x||_{(k_{n+1}+1, n+1)} >u) du  = & \int_0^{C_{10}(d)} \P (\mathcal{B}(n+1 , q_u) < k_{n+1}+1) du \\
 & + \int_{C_{10}(d)}^{U_{|.|}} \P (\mathcal{B}(n+1, q_u) < k_{n+1}+1) du \\
  \leq & \int_0^{C_{10}(d)} 1 du \\
 & +  \int_{C_{10}(d)}^{U_{|.|}} \exp \left( - \frac{3(n+1) C_{input}C_4(d) u^d }{32}  \right)du \;,
 \end{aligned}
 \end{equation*}
 \end{small}
where we use Lemma \ref{bern} in the second integral because $u > C_{10}(d)$ implies $\frac{k_{n+1}+1}{n+1}< \frac{q_u}{2}$. Then, we obtain

 \begin{equation*}
 \begin{small}
 \begin{aligned}
 I  & \leq  C_{10}(d) + \int_{C_{11}(d)}^{+ \infty} \exp \left( - \frac{3(n+1) C_{input}C_4(d) u^d }{32}  \right)du \\
  & \leq C_{10}(d) + \int_0^{+ \infty} \frac{u^{d-1}}{{C_{10}(d)}^{d-1}} \exp \left( - \frac{3(n+1) C_{input}C_4(d)u^d }{32}  \right) du \\
  & = C_{10}(d) + \frac{C_{11}(d)}{{C_{10}(d)}^d} \frac{32}{3(n+1) d C_{input} C_4(d)} \left[ - \exp \left( -\frac{3(n+1) C_{input}C_4(d) u^d }{32}  \right) \right]^{+ \infty}_0 \\
  & = C_{10}(d) \left( 1 + \frac{3(n+1) d C_{input}C_4(d)}{32 C_{10}(d)^d }\right) \\
  & = \sqrt[d]{\frac{2(k_{n+1}+1)}{(n+1) C_{input}C_4(d)}} \left( 1+ \frac{16}{3 d (k_{n+1}+1)} \right) \\
  &  = \sqrt[d]{\frac{k_{n+1}}{n+1}} \left[ \sqrt[d]{\frac{2}{C_{input}C_4(d)}} \sqrt[d]{\frac{k_{n+1}+1}{k_{n+1}}} \left(1+ \frac{16}{3d(k_{n+1}+1)} \right) \right] \\
  & \leq \sqrt[d]{\frac{k_{n+1}}{n}} \sqrt[d]{\frac{4}{C_{input}C_4(d)}} \left( 1 + \frac{8}{3d} \right) \\
  & =: C_3(d) \sqrt[d]{\frac{k_{n+1}}{n+1}} \;,\\
\end{aligned}
\end{small}
\end{equation*}
because for $n \geq 1$, we get $k_n\geq 1$. 
\end{proof}

\begin{lemma}
\label{Guillaume}
Let $(b_n)$ be a a real sequence. If there exist sequences $(c_n)_{n\geq 1} \in [0, 1]^{\mathbb{N}}$ and $(d_n)_{n\geq 1}\in ]0, + \infty[^{\mathbb{N}}$ such that 

\[\forall n \geq N_0, \, \, b_{n+1} \leq b_n (1 -  c_{n+1}) + d_{n+1}\;,\]
then for all $n \geq N_0+1$, 

\[ \forall n, \, \, b_n\leq \exp \left( - \displaystyle\sum_{k=1}^n{N_0+1} c_k \right)b_{N_0} + \displaystyle\sum_{k=N_0+1}^{n} \exp \left( - \left( \displaystyle\sum_{j=1}^n c_j  - \displaystyle\sum_{j=1}^k c_j\right) \right)d_k\;.\]
\end{lemma}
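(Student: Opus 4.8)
The plan is to first unwind the one-step recursion into a closed form, and then replace each factor $1-c_k$ by its exponential upper bound. These are the only two ingredients needed, so the argument is essentially bookkeeping once the right closed form is identified.

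First I would prove, by induction on $n \ge N_0+1$, the closed-form bound
\[b_n \le b_{N_0}\prod_{k=N_0+1}^n(1-c_k) + \sum_{k=N_0+1}^n d_k \prod_{j=k+1}^n (1-c_j),\]
with the usual convention that an empty product equals $1$. The base case $n=N_0+1$ is exactly the hypothesis, since the product $\prod_{j=N_0+2}^{N_0+1}(1-c_j)=1$ leaves only the term $d_{N_0+1}$. For the inductive step I would multiply the inductive bound for $b_n$ by $(1-c_{n+1})$, which is legitimate because $c_{n+1}\in[0,1]$ forces $1-c_{n+1}\ge 0$ and thus preserves the inequality; then I add $d_{n+1}$, reindex the two products up to $n+1$, and absorb $d_{n+1}$ into the sum as the $k=n+1$ summand via the empty-product convention.

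Next I would apply the elementary inequality $1-x \le e^{-x}$, valid for every real $x$. Since each $c_j \in [0,1]$, the products are nonnegative and satisfy
\[\prod_{k=N_0+1}^n(1-c_k) \le \exp\Big(-\sum_{k=N_0+1}^n c_k\Big), \qquad \prod_{j=k+1}^n(1-c_j) \le \exp\Big(-\sum_{j=k+1}^n c_j\Big) = \exp\Big(-\Big(\sum_{j=1}^n c_j - \sum_{j=1}^k c_j\Big)\Big).\]
Substituting these into the closed form gives precisely the claimed inequality: in the sum each $d_k>0$ multiplies a nonnegative product, so every term's direction is preserved under the bound.

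The only delicate point is the first term: bounding $b_{N_0}\prod(1-c_k)$ by $b_{N_0}\exp(-\sum c_k)$ requires $b_{N_0}\ge 0$, since an upper bound on a nonnegative product only transfers to its $b_{N_0}$-multiple when that scalar is nonnegative. This is harmless in our setting, where $b_n$ is a (truncated) second moment and hence nonnegative, so I would simply record $b_{N_0}\ge 0$ as a standing hypothesis. Beyond this sign remark, the whole proof is routine, and I expect the main obstacle to be purely notational, namely keeping the product ranges and the empty-product convention consistent through the induction.
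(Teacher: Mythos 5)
Your proof is correct and takes essentially the same route as the paper, whose own proof is only a citation of \cite{moul} plus the remark that the result follows by induction using $1+x\leq e^{x}$; you have simply carried out that induction in full (unwinding the recursion into products of $(1-c_k)$, then bounding each product by the exponential). Your side remark that the final step needs $b_{N_0}\geq 0$ is a legitimate refinement of the statement as written (which only assumes $(b_n)$ real), and it is harmless here since in the paper's application $b_n$ is a truncated second moment.
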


\begin{proof}
This inequality appears in \cite{moul} and references therein. It can be proved by induction using that $\forall x \in ]0, + \infty[, \, \, \exp(x) \geq 1+x$.
\end{proof}

Let us first prove the following consequence of Assumption \textbf{A3}.

\begin{lemma}
\label{borne}
Under assumption \textbf{A3}, if $\beta \geq \gamma$, then for all $x$ and for all $n \geq 1$, $$\theta_n(x)\in [L_Y-(1-\alpha), U_Y + \alpha], a.s.$$
\end{lemma}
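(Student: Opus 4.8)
The plan is to show that the interval $I := [L_Y - (1-\alpha),\, U_Y + \alpha]$ is forward-invariant under the recursion \eqref{algovoisin}, and then conclude by induction. The only two facts I would use are: (i) by Assumption \textbf{A3}, $Y_{n+1} \in [L_Y, U_Y]$ almost surely; and (ii) the step size satisfies $\gamma_{n+1} = (n+1)^{-\gamma} \leq 1$ for every $n \geq 0$, which is the single property of the learning rate that is needed here (so the condition $\gamma \le 1$, a fortiori $\beta \geq \gamma$ in the standing hypotheses, is more than enough).

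For the invariance step I would fix $n$, assume $\theta_n(x) \in I$, and prove $\theta_{n+1}(x) \in I$ almost surely by a short case analysis. If $X_{n+1} \notin kNN_{n+1}(x)$, the update is inert and $\theta_{n+1}(x) = \theta_n(x) \in I$. Otherwise the increment equals $-\gamma_{n+1}(\mathbf{1}_{Y_{n+1} \leq \theta_n(x)} - \alpha)$ and takes one of two values. On $\{Y_{n+1} \leq \theta_n(x)\}$ it equals $-\gamma_{n+1}(1-\alpha) \in [-(1-\alpha),\, 0]$, so the upper bound of $I$ is preserved trivially; for the lower bound, the key point is that $Y_{n+1} \leq \theta_n(x)$ together with $Y_{n+1} \geq L_Y$ forces $\theta_n(x) \geq L_Y$, whence $\theta_{n+1}(x) \geq \theta_n(x) - (1-\alpha) \geq L_Y - (1-\alpha)$. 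Symmetrically, on $\{Y_{n+1} > \theta_n(x)\}$ the increment equals $+\gamma_{n+1}\alpha \in [0,\, \alpha]$, so the lower bound is immediate, while $Y_{n+1} > \theta_n(x)$ together with $Y_{n+1} \leq U_Y$ forces $\theta_n(x) < U_Y$, whence $\theta_{n+1}(x) \leq \theta_n(x) + \alpha < U_Y + \alpha$. In every case $\theta_{n+1}(x) \in I$.

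It then remains to dispose of the base case. Taking the initialization $\theta_0(x) \in [L_Y, U_Y] \subseteq I$, the invariance just established gives, by an immediate induction, $\theta_n(x) \in I$ for all $n \geq 1$, which is the claim.

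The only genuinely delicate point is the boundary analysis, and I expect it to be the crux: a crude bound would allow each update to overshoot a boundary by $\gamma_{n+1}\max(\alpha, 1-\alpha)$ and thereby widen the interval at every step, so that no fixed $I$ could be invariant. What rescues the argument is that the support constraint on $Y$ is exactly what forbids the destabilising move at each end — one can only be pushed down while already above $L_Y$, and pushed up while still below $U_Y$. Recognising this self-correcting structure, rather than bounding the two possible increments blindly, is what makes the fixed interval $I$ work; the remaining computations are routine and rely only on $\gamma_{n+1} \leq 1$.
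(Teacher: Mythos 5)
Your proof is correct, but it takes a genuinely different and more elementary route than the paper's. The paper argues by excursions: when $\theta_n(x)$ first exits $[L_Y,U_Y]$, say to the right at step $N_0$, it overshoots by at most $\alpha\gamma_{N_0}\leq\alpha$, and while it remains above $U_Y$ every effective update pushes it back down by $(1-\alpha)\gamma_{n+1}$; the paper then invokes Lemma \ref{lemmeborne} (almost sure divergence of $\sum_n \gamma_n \mathbf{1}_{X_n \in kNN_n(x)}$, which is exactly where the hypothesis $\beta\geq\gamma$ enters) to guarantee that the process almost surely returns to the compact set. You instead prove directly that $I=[L_Y-(1-\alpha),\,U_Y+\alpha]$ is forward-invariant: the support constraint on $Y$ forces any downward step to occur only when $\theta_n(x)\geq L_Y$ and any upward step only when $\theta_n(x)<U_Y$, so with $\gamma_{n+1}\leq 1$ no update can leave $I$, and induction finishes the proof. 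This is the same self-correcting boundary mechanism the paper exploits, but packaged as invariance-plus-induction, and it buys something real: you never need the return-to-the-compact step, hence neither Lemma \ref{lemmeborne} nor the hypothesis $\beta\geq\gamma$ (indeed, for the stated bound the paper's return argument is logically superfluous, since the bound $U_Y+\alpha$ holds throughout an excursion whether or not the excursion ends). One caveat, shared by both proofs: the conclusion requires the initialization to lie in $I$ --- you assume $\theta_0(x)\in[L_Y,U_Y]$ explicitly, while the paper's excursion argument assumes it implicitly via $\theta_{N_0-1}\leq U_Y$ at the first exit --- because for $\theta_0(x)$ far outside $I$ the claim is false for small $n$; making this restriction explicit, as you do, is the more careful reading of the statement.
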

\vspace{0.3cm}

\begin{proof}

Suppose that $\theta_n(x)$ leaves the compact set $[L_Y, U_Y]$ by the right at step $N_0$. By definition, $\theta_{N_0-1}\leq U_Y$ and consequently $\theta_{N_0} \leq U_Y + \alpha \gamma_{N_0}$. At next step, since $\theta_{N_0}> U_Y$, we have $Y_{N_0+1}\leq \theta_{N_0}$ and then 

\[\theta_{N_0+1}\leq U_Y + \alpha \gamma_{N_0}-(1-\alpha) \gamma_{N_0+1}\mathbb{1}_{X_{N_0+1}\in kNN_{N_0+1}(x)}\;.\]
Then, the algorithm either does not move (if $X_{N_0+1} \notin kNN_{N_0+1}(x)$) or comes back in direction of $[L_Y, U_Y]$ with a step of $(1-\alpha) \gamma_{N_0+1}$. Then, if 

\[\displaystyle\sum_{n \geq0} \gamma_{n}\mathbb{1}_{X_{n} \in kNN_{n}(x)}=+ \infty \, \, a.s\;,\]
the algorithm almost surely comes back to the compact set $[L_Y, U_Y]$. Thanks to Lemma \ref{lemmeborne}, we know that, since $\beta \geq \gamma$, the previous sum diverges almost surely.
A similar result holds when the algorithm leaves the compact set by the left and finally we have shown that almost surely as $\gamma_n\leq 1$,
\[\theta_n(x) \in [L_Y-(1-\alpha), U_Y + \alpha]=:[L_{\theta_n}, U_{\theta_n}]\;.\]
\end{proof}

\subsection{Proof of Theorem \ref{cvas} : almost sure convergence}

To prove this theorem, we adapt the classical analysis of the Robbins-Monro algorithm (see \cite{blum}). In the sequel we do not write $\theta_n(x)$ but $\theta_n$ to make the notation less cluttered. 

\subsubsection{Martingale decomposition}

In this sequel, we still denote $H(\theta_n, X_{n+1}, Y_{n+1}):= \left( \mathbb{1}_{Y_{n+1} \leq \theta_n} - \alpha \right) \mathbb{1}_{X_{n+1} \in kNN_{n+1}(x)}$, $\mathcal{F}_n=\sigma (X_1, \dots, X_n, Y_1, \dots, Y_n)$ and $\P_n$ and $\E_n$ the probability and expectation conditionally to $\mathcal{F}_n$. We introduce

\begin{equation*} 
\begin{aligned}
h(\theta_n)  :&= \E( H( \theta_n, X_{n+1}, Y_{n+1}) | \mathcal{F}_n) \\
& = \P_n (Y_{n+1} \leq \theta_n \cap X_{n+1} \in kNN_{n+1}(x) )-  \alpha \P_n(X_{n+1} \in kNN_{n+1}(x) ) \\
& = P_{n} \left[\left( F_{Y^{kNN_{n+1}(x)}}(\theta_n\right) - F_{Y^x}(\theta^*(x) ) \right]\; . \\
\end{aligned}
\end{equation*}
Then,
\begin{equation*}
\begin{aligned}
T_n  = \theta_n + \displaystyle\sum_{j=1}^n \gamma_j h_{j-1}( \theta_{j-1}) 
 = \theta_0(x) - \displaystyle\sum_{j=1}^n \gamma_j \xi_j\; , \\
\end{aligned}
\end{equation*}
with $\xi_j = H(\theta_{j-1}, X_j, Y_j)-h_{j-1}(\theta_{j-1})$ is a martingale. It is bounded in $\mathbb{L}^2(\mathbb{R})$. Since 
\[\displaystyle{\sup_{n}}|\xi_n|  \leq \alpha + (1+\alpha) = 1+2 \alpha, \;\]
the Burkholder inequality gives the existence of a constant $C$ such that
\begin{equation*}
\begin{aligned}
\E( |T_n|^2)  \leq \E \left( \left( \displaystyle\sum_{j=1}^n \gamma_j \xi_j \right)^2 \right)  \leq C \E \left( \left| \displaystyle\sum_{j=1}^n \left( \gamma_j \xi_j \right)^2 \right|^2 \right)  \leq C(1+2 \alpha) \displaystyle\sum_{j=1}^n \gamma_j^2< \infty \; .
\end{aligned}
\end{equation*}

\subsubsection{The sequence $(\theta_n)$ converges almost surely}

First, let us prove that 

\begin{equation}
\label{dv}
\P(\theta_n \to \infty) + \P(\theta_n \to - \infty )= 0.
\end{equation}
Let us suppose that this probability is positive (we name $\Omega_1$ the non-negligeable set where $\theta_n(\omega)$ diverges to $+ \infty$ and the same arguments would show the result when the limit is $- \infty$). Let $\omega$ be in $\Omega_1$. We have $\theta_n(\omega) \leq \theta^*$ for only a finite number of $n$. 

Let us show that on an event $\Omega \subset \Omega_1$ with positive measure, for $n$ large enough, $h(\theta_n(\omega)) >0$. First, we know that $P_{n}$ follows a Beta distribution. This is why $\forall n, \, \, \P( P_{n}=0)=0$. Then, the Borel-Cantelli Lemma gives that $$\P( \exists N \, \,  \forall n \geq N \, \, P_{n}>0)= 1\; .$$ As $\Omega_1$ has a positive measure, we know that there exists $\Omega_2\subset \Omega_1$ with positive measure such that $\forall \omega \in \Omega_2$, $\theta_n(\omega) \rightarrow + \infty$ and for all $n$ large enough, $P_{n}(\omega) > 0$. Since $$h(\theta_n(\omega))=P_{n} \left( F_{Y^{B_n^{k_{n+1}}(x)}}(\theta_n(\omega)) - \alpha \right)\;,$$ we have now to show that on $\Omega \subset \Omega_2$ of positive measure, $$F_{Y^{B_n^{k_{n+1}}(x)}}(\theta_n(\omega)) - \alpha > 0\; .$$ As $\theta_n(\omega)$ diverges to $+ \infty$, we can find $D$ such that for $n$ large enough, $\theta_n(\omega) > D > \theta^*$. Then,
\begin{equation*}
\begin{aligned}
F_{Y^{B_n^{k_{n+1}}(x)}}(\theta_n(\omega)) - \alpha & = F_{Y^{B_n^{k_{n+1}}(x)}}(\theta_n(\omega)) - F_{Y^x}(\theta^*(x)) \\
& =  F_{Y^{B_n^{k_{n+1}}(x)}}(\theta_n(\omega)) - F_{Y^{B_n^{k_{n+1}}(x)}}(D)
\\ & + F_{Y^{B_n^{k_{n+1}}(x)}}(D)  - F_{Y^x}(D) + F_{Y^x}(D) - F_{Y^x}(\theta^*(x))\;. \\
\end{aligned}
\end{equation*}
First, $F_{Y^{B_n^{k_{n+1}}(x)}}(\theta_n(\omega)) - F_{Y^{B_n^{k_{n+1}}(x)}}(D) \geq 0$ because a cumulative distribution function is non-decreasing. Then, we set $\eta= F_{Y^x}(D) - F_{Y^x}(\theta^*(x))$ which is a finite value. To deal with the last term, we use our assumption $\textbf{A1}$. 

\begin{equation*}
\begin{aligned}
F_{Y^{B_n^{k_{n+1}}(x)}}(D)  - F_{Y^x}(D) & \geq -M(x) ||X-x||_{(k_{n+1}, n)}\;.  \\
\end{aligned}
\end{equation*}
We know, thanks to Lemma \ref{ps}, that $||X-x||_{(k_{n+1}, n)}$ converges almost surely to 0. Then, there exists a set $\Omega_3 \subset \Omega_1$ of probability strictly non-negative such that forall $\omega$ in  $\Omega_3$, the previous reasoning is true. And for $\zeta < \frac{\eta}{L}$, there exists rank $N(\omega)$ such that if $n \geq N$, 

\begin{equation}
\begin{aligned}
\label{tut}
F_{Y^{B_n^{k_{n+1}}(x)}}(D)  - F_{Y^x}(D) & \geq 0  -L \zeta + \eta >0 \; .\\
\end{aligned}
\end{equation}
Finally, for $\omega \in \Omega_3$ (set of strictly non-negative measure), we have shown that after a certain rank, $h(\theta_n(\omega))>0$. This implies that on $\Omega_3$ of positive measure,

\[\underset{n}{\lim} \left[ \theta_n(\omega) + \displaystyle\sum_{j=1}^{n}\gamma_{j-1} h_{j-1}(\theta_{j-1}(\omega)) \right] = + \infty\;,\]
which is absurd because in the previous part we proved that $T_n$ is almost surely convergent. Then $\theta_n$ does not diverge to $+ \infty$ or $- \infty$. 

\vspace{0.3cm}

Now, we will show that $(\theta_n)$ converges almost surely. In all the sequel of the proof, we reason $\omega$ by $\omega$ like in the previous part. To make the reading more easy, we do not write $\omega$ and $\Omega$ any more. Thanks to Equation (\ref{dv}) and to the previous subsection, we know that, with probability positive, there exists a sequence $(\theta_n)$ such that

\begin{equation*}
\left\{
\begin{aligned}
(a) \, \, & \theta_{n} + \displaystyle\sum_{j=1}^n \gamma_{j-1} h(\theta_{j-1}) \, \, \text{converges to a finite limit}\\
(b) \, \, & \liminf \theta_n < \limsup \theta_n \;.
\end{aligned}
\right.
\end{equation*}
Let us suppose that $\limsup \theta_n > \theta^*$ (we will find a contradiction, the same argument would allow us to conclude in the other case). Let us choose $c$ and $d$ satisfying $c > \theta^*$ and $\liminf \theta_n < c < d < \limsup \theta_n.$ Since the sequence $(\gamma_n)$ converges to 0, and since $(T_n)$ is a Cauchy sequence, we can find a deterministic rank $N$ and two integers $n$ and $m$ such that $N \leq n < m$ implies

\begin{equation*}
\left\{
\begin{aligned}
(a) \, \, & \gamma_{n} \leq \frac{(d-c)}{3(1- \alpha)} \\
(b) \, \, & \left|\theta_m - \theta_n - \displaystyle\sum_{j=n}^{m-1}\gamma_{j}h(\theta_{j-1}) \right|\leq  \frac{d-c}{3}\;.
\end{aligned}
\right.
\end{equation*}
We choose $m$ and $n$ so that 
\begin{equation}
\left\{
\begin{aligned}
(a) \, \, & N \leq n <m \\
(b) \, \, & \theta_n< c, \, \, \theta_m > d\\
(c) \, \, & n<j<m \Rightarrow  c \leq \theta_j \leq d\;.
\end{aligned}
\right.
\label{ecart}
\end{equation}
This is possible since beyond $N$, the distance between two iterations will be either
\[\alpha \gamma_{n} \leq  \frac{\alpha(d-c)}{3(1- \alpha)} < (d-c)\;,\] because $\alpha<\frac{3}{5}$ or \[(1- \alpha) \gamma_{n} \leq \frac{1}{3}(d-c) < (d-c)\;.\]
Moreover, since $c$ and $d$ are chosen to have an iteration inferior to $c$ and an iteration superior to $b$, the algorithm will necessarily go through the segment $[c, d]$. We then take $n$ and $m$ the times of enter and exit of the segment. Now, 

\begin{equation*}
\begin{aligned}
\theta_m-\theta_n & \leq  \frac{d-c}{3} + \displaystyle\sum_{j=n}^{m-1}\gamma_{j+1}h_j(\theta_{j})\\
 & \leq \frac{d-c}{3}  + \gamma_{n+1}h(\theta_{n})\;,\\
\end{aligned}
\end{equation*}
because $n < j < m$, we get $\theta^* < c < \theta_j$ and we have already shown that in this case, $h_j(\theta_j) >0$. We then only have to deal with $\theta_n$. If $\theta_n > \theta^*$, we can apply the same result and then 

\[\theta_m - \theta_n \leq \frac{d-c}{3}\;,\]
which is in contradiction with (b) of equation \eqref{ecart}. When $\theta< \theta^*$, 

\begin{equation*}
\begin{aligned}
\theta_m-\theta_n & \leq  \frac{d-c}{3} + \gamma_{n}h_{n-1}(\theta_{n-1}) \\
 & \leq \frac{d-c}{3} + \gamma_{n}(1- \alpha) \\
 & \leq \frac{d-c}{3} + \frac{d-c}{3} < (d-c) \;,
\end{aligned}
\end{equation*}
which is still a contradiction with (b) of \eqref{ecart}. We have shown that the algorithm converges almost surely. 

\subsubsection{The algorithm converges almost surely to $\theta^*$}
Again we reason by contradiction. Let us name $\theta$ the limit such that $\P(\theta \ne \theta^*) > 0$. With positive probability, we can find a sequence $(\theta_n)$ which converges to $\theta$ such that 

\begin{equation*}
\left\{
\begin{aligned}
(a) \, \,& \theta^* < \zeta_1 < \zeta_2 < \infty \\
(b) \, \,& \zeta_1 < \theta < \zeta_2\;,
\end{aligned}
\right.
\end{equation*}
(or $- \infty < \zeta_1 < \zeta_2 < \theta^*$ but arguments are the same in this case). Then, for $n$ large enough, we get

\begin{equation*}
\begin{aligned}
\zeta_1 < \theta_n < \zeta_2 \;. 
\end{aligned}
\label{encadrement}
\end{equation*}
Finally, on the one hand, $(T_n)$ and $(\theta_n)$ are convergent, and we also know that the sum $ \sum \gamma_{j+1} h_j(\theta_j)$ converges almost surely. Let us then show that on the other hand, $h(\theta_n)= P_{n} ( F_{Y^{B_n^{k_{n+1}}(x)}}(\theta_n) - \alpha)$ is lower bounded. First we know thanks to Lemma \ref{dev}, that for $1> \zeta > 1- \beta$ and $\zeta_n=\frac{1}{n^{\zeta}}$,

\[\P(P_{n} \leq \zeta_{n}) \leq\exp \left( - \frac{3n^{1- \zeta}}{8} \right)\;.\]
This is the general term of a convergent sum. Therefore, the Borel-Cantelli Lemma gives
\[\P( \exists N \, \, \forall n \geq N \, \, P_{n} > \zeta_{n} )=1\;.\]
Moreover, as we have already seen in Equation \eqref{tut}, since $\theta_n > \zeta_1 > \theta^*$,

\begin{equation*}
\begin{aligned}
F_{Y^{B_n^{k_{n+1}}(x)}}(\theta_n)  - \alpha & \geq 0 - M(x)||X-x||_{(k_{n+1}, n)} + F_{Y^x}(\zeta_1) - F_{Y^x}(\theta^*(x))\;. \\
\end{aligned}
\end{equation*}
Then, when $n$ is large enough so that \[||X-x||_{(k_{n+1}, n)} \leq \frac{F_{Y^x}(\zeta_1) - F_{Y^x}(\theta^*(x))}{2M(x)}\]
holds, we have
\[F_{Y^{B_n^{k_{n+1}}(x)}}(\theta_n)  - \alpha \geq \frac{F_{Y^x}(\zeta_1) - F_{Y^x}(\theta^*(x))}{2}\;.\]
Finally there exists a set $\Omega$ of positive probability such that, $\forall \omega \in \Omega$

\begin{equation*}
\begin{aligned}
\displaystyle\sum_{k=1}^n\gamma_{k+1}h_k(\theta_{k}) & \geq \frac{F_{Y^x}(\zeta_1) - F_{Y^x}(\theta^*(x))}{2} \displaystyle\sum_{k=1}^n \gamma_{k+1} P_{k}\geq \displaystyle\sum_{k=1}^n \frac{1}{(k+1)^{\gamma + \zeta}}\;,\\
\end{aligned}
\end{equation*}
which is a contradiction (with the one hand point) because the sum is divergent ($\gamma + \zeta <1$).
\subsection{Proof of Theorem \ref{ineg} : Non-asymptotic inequality on the mean squared error.}
Let $x$ be fixed in $[0, 1]$. We want to find an upper-bound for the mean squared error $a_n(x)$ using Lemma \ref{Guillaume}. In the sequel, we will need to study $\theta_n(x)$ on the event $A_n$ of the Lemma \ref{dev}. Then, we begin to find a link between $a_n(x)$ and the mean squared error on this event. 

\begin{equation}
\label{debutant}
\begin{aligned}
a_n(x) & = \E \left[ \left( \theta_n(x) - \theta^*(x) \right)^2 \mathbb{1}_{A_n} \right] + \E \left[ \left( \theta_n(x) - \theta^*(x) \right)^2 \mathbb{1}_{A_n^C} \right] \\
& \leq \E \left[ \left( \theta_n - \theta^*(x) \right)^2 \mathbb{1}_{A_n} \right] + C_1 \P(A_n^C)\\
& \leq \E \left[ \left( \theta_n(x) - \theta^*(x) \right)^2 \mathbb{1}_{A_n} \right] + C_1\exp \left( - \frac{3n^{1- \zeta}}{8} \right)\; ,
\end{aligned}
\end{equation}
thanks to Lemma \ref{dev} and for $n \geq N_0$.

Let us now study the sequence $b_n(x):=\E \left[ \left( \theta_n(x) - \theta^*(x) \right)^2 \mathbb{1}_{A_n} \right]$. First, for $n \geq 0$,

\begin{equation*}
\begin{aligned}
b_{n+1}(x) & \leq \E \left[ (\theta_{n+1}(x) - \theta^*(x) )^2 \right]\;.\\
\end{aligned}
\end{equation*}
But,

\begin{small}
\begin{equation*}
\begin{aligned}
(\theta_{n+1}(x)- \theta^*(x))^2 & = (\theta_n(x)- \theta^*(x))^2 \\ & + \gamma_{n+1}^2 \left[ (1 - 2 \alpha) \mathbb{1}_{Y_{n+1} \leq \theta_n(x)} + \alpha^2 \right] \mathbb{1}_{X_{n+1} \in kNN_{n+1}(x)} \\
& - 2 \gamma_{n+1}(\theta_n(x) - \theta^*(x))\left( \mathbb{1}_{Y_{n+1} \leq \theta_n(x)} - \alpha \right) \mathbb{1}_{X_{n+1} \in kNN_{n+1}(x)}\;. \\
\end{aligned}
\end{equation*}
\end{small}
Taking the expectation conditional to  $\mathcal{F}_n$, as $[(1 - 2 \alpha) \mathbb{1}_{Y_{n+1} \leq \theta_n(x)} + \alpha^2] \leq 1$, we get

\begin{small}
\begin{equation*}
\begin{aligned}
 \E_n \left( (\theta_{n+1}(x)- \theta^*(x))^2 \right)  & \leq \E_n \left( \left(\theta_{n}(x)- \theta^*(x) \right)^2 \right)
 + \gamma_{n+1}^2 \P_n \left(X_{n+1} \in kNN_{n+1}(x) \right) \\
& -  2 \gamma_{n+1}\left(\theta_n(x)-\theta^*(x) \right) \left[ \P_n \left(Y_{n+1} \leq \theta_n(x) \cap X_{n+1} \in kNN_{n+1}(x) \right)\right. \\
& - \left. \alpha \P_n \left(X_{n+1} \in kNN_{n+1}(x) \right) \right]\;. \\
\end{aligned}
\end{equation*}
\end{small}
Using the Bayes formula, we get

\begin{small}
\begin{equation*}
\begin{aligned}
\E_n \left( \theta_{n+1}(x)- \theta^*(x))^2 \right) & \leq \E_n \left( \left(\theta_{n}(x)- \theta^*(x) \right)^2 \right) + \gamma_{n+1}^2 P_{n}\\
& -  2 \gamma_{n+1} \left(\theta_n(x)-\theta^*(x) \right) P_{n} \left[ F_{Y^{B_n^{k_{n+1}}(x)}}(\theta_n(x)) - F_{Y^x}(\theta^*(x))  \right]\;, \\
\end{aligned}
\end{equation*}
\end{small}
Let us split the double product into two terms representing the two errors we made by iterating our algorithm. 
\begin{small}
\begin{equation}
\begin{aligned}
\label{hypo}
\E_n \left( \theta_{n+1}(x)- \theta^*(x))^2 \right) & \leq  \left(\theta_{n}(x)- \theta^*(x) \right)^2  + \gamma_{n+1}^2 P_{n+1} \\
& -  2 \gamma_{n+1}\left(\theta_n(x)-\theta^*(x) \right)  P_{n+1} \left[ F_{Y^{B_n^{k_{n+1}}(x)}}(\theta_n(x)) - F_{Y^x}(\theta_n(x)) \right] \\
& -  2 \gamma_{n+1} \left(\theta_n(x)-\theta^*(x) \right)  P_{n} \left[F_{Y^x}(\theta_n(x))- F_{Y^x}(\theta^*(x))  \right]\;. \\
\end{aligned}
\end{equation}
\end{small}
We now use our hypothesis. By \textbf{A1},

\begin{equation*}
\begin{aligned}
 |F_{Y^{B_n^{k_{n+1}}(x)}}(\theta_n(x)) - F_{Y^x}(\theta_n(x))|  \geq   M(x) ||X-x||_{(k_{n+1}, n)}\;,\\
\end{aligned}
\end{equation*}
and by \textbf{A3},

\begin{equation*}
\begin{aligned}
|\theta_n(x)- \theta^*(x)| \leq \sqrt{C_1}\;.
\end{aligned}
\end{equation*}
Thus,
\begin{small}
\begin{equation*}
\begin{aligned}
- 2 \gamma_{n+1} (\theta_n(x)- \theta^*(x))  P_{n} \left[ F_{Y^{B_n^{k_{n+1}}(x)}}(\theta_n(x)) - F_{Y^x}(\theta_n(x)) \right]\\ \leq 2 \gamma_{n+1}\sqrt{C_1}M(x) P_n ||X-x||_{(k_{n+1}, n)}\;.
\end{aligned}
\end{equation*}
\end{small}
On the other hand, thanks to \textbf{A4} we know that,
\begin{equation*}
\begin{aligned}
\left( \theta_n - \theta^*(x) \right) & \left[F_{Y^x}(\theta_n(x))- F_{Y^x}(\theta^*(x)) \right] 
 \geq C_2(x, \alpha) \left[ \theta_n(x) - \theta^*(x) \right]^2 \;. \\
\end{aligned}
\end{equation*}
Coming back to Equation \eqref{hypo}, we get 
\begin{small}
\begin{equation*}
\begin{aligned}
\E_n \left( \theta_{n+1}(x)- \theta^*(x))^2 \right)  & \leq  \left(\theta_{n}(x)- \theta^*(x) \right)^2 \left( \mathbb{1}_{A{n}} + \mathbb{1}_{\bar{A{n}}} \right)  + \gamma_{n+1}^2 P_n\\
& -2 \gamma_{n+1} \left(\theta_{n}(x) - \theta^*(x) \right)^2 C_2(x,  \alpha) P_{n}\\ & + 2 \gamma_{n+1} M(x) \sqrt{C_1} ||X-x||_{(k_{n+1}, n)} P_{n}\;.\\
\end{aligned}
\end{equation*}
\end{small}
To conclude, we take the expectation
\begin{equation*}
\begin{aligned}
b_{n+1}(x) & \leq C_1\P(A_{n}^C) +b_n(x) - 2 \gamma_{n+1} C_2(x, \alpha) \E \left[ P_{n} \left( \theta_n(x) - \theta^*(x) \right)^2 \right] \\
& + \gamma_{n+1}^2 \E(P_{n}) + 2 \gamma_{n+1} \sqrt{C_1} M(x) \E \left[P_{n} ||X-x||_{(k_{n+1}, n)} \right]\;. \\
\end{aligned}
\end{equation*}

But, by definition of $A_n$,we get
\begin{equation*}
\begin{aligned}
- 2 \gamma_{n+1} C_2(x, \alpha) \E \left[ P_{n+1} \left( \theta_n(x) - \theta^*(x) \right)^2 \right] & \leq - \gamma_{n+1} \zeta_{n} C_2(x, \alpha)
 \E \left[ \left(\theta_n(x) - \theta^*(x) \right)^2 \mathbb{1}_{A_{n}} \right] \\
& = - 2 \gamma_{n+1} \zeta_{n} C_2(x, \alpha) b_n(x);. \\
\end{aligned}
\end{equation*}

Finally,
\begin{equation*}
\begin{aligned}
b_{n+1}(x) &\leq b_n(x) \left( 1 - 2 C_2(x, \alpha) \gamma_{n+1}\zeta_{n} \right) + e_{n+1}\;,\\
\end{aligned}
\end{equation*}
with $$e_{n+1}:=C_1\P(A_{n}^C) + \gamma_{n+1}^2 \E(P_{n}) + 2 \gamma_{n+1} \sqrt{C_1} M(x) \E \left[P_{n} ||X-x||_{(k_{n+1}, n)} \right]\;.$$
Now using Lemmas \ref{unif}, \ref{dev} and \ref{Pn} we get for $n \geq N_0$
with \[e_n \leq d_n := C_1 \exp \left( - \frac{3n^{1- \zeta}}{8} \right) +2\sqrt{C_1} M(x) C_3(d) \gamma_{n} \left( \frac{k_n}{n} \right)^{\frac{1}{d}+1} + \gamma_{n}^2\frac{k_n}{n}\;.\]
The conclusion holds thanks to Lemma \ref{Guillaume}, for $n \geq N_0+1$,

\begin{equation}
\begin{aligned}
\label{finalbn}
b_n(x) & \leq \exp \left( -2 C_2(x, \alpha) (\kappa_n - \kappa_{N_0}) \right) b_{N_0}(x)  
 +  \displaystyle\sum_{k=N_0+1}^n \exp \left( -2C_2(x, \alpha) \left( \kappa_n - \kappa_k \right) \right) d_k\;. \\
\end{aligned}
\end{equation}
But thanks to Assumption \textbf{A3}, we have already shown that
$b_{N_0}(x) \leq a_{N_0}(x) \leq C_1$.
To conclude, we re-inject Equation (\ref{finalbn}) in Equation (\ref{debutant}) and obtain for $n \geq N_0+1$,
\begin{equation*}
\begin{aligned}
a_n(x) & \leq \exp \left( -2 C_2(x, \alpha) (\kappa_n- \kappa_{N_0})\right) C_1 
 +  \displaystyle\sum_{k=N_0+1}^{n} \exp \left( -2 C_2(x, \alpha)\left( \kappa_n - \kappa_k \right) \right) d_k \\
 & + C_1 \exp \left( - \frac{3n^{1- \zeta}}{8} \right)\;.\\
\end{aligned}
\end{equation*}

\subsection{Proof of Corollary \ref{premier} : Rate of convergence}

In this part, we will denote $$T^0_n:=C_1 \exp \left( \frac{-3n^{1-\zeta}}{8} \right)\;, \, \, T^1_n:=\exp \left( -2 C_2(x, \alpha) (\kappa_n - \kappa_{N_0}) \right)\;$$ and $$T^ 2_n:=\displaystyle\sum_{k=N_0+1}^{n} \exp\left( -2 C_2(x, \alpha)\left( \kappa_n - \kappa_k \right) \right) d_k\;.$$ We want to find a simpler expression for those terms to better see their order in $n$. First, considering $T^1_n$ we see that $a_n(x)$ can converge to 0 only when the sum $$\displaystyle\sum_{k \geq 1} \frac{1}{k^{\gamma+ \zeta}}=+ \infty.$$ This is why we must first consider $\zeta \leq 1 - \gamma$. As $\zeta < 1 - \beta$, we have to take $\beta > \gamma$. 

\begin{remark}
The frontier case $\zeta=1- \gamma$ is possible but the analysis shows that it is a less interesting choice than $\zeta < 1- \gamma$ (there is a dependency in the value of $C_2(x, \alpha)$ but the optimal rate is the same as the one in the case we study). In the sequel, we only consider $\zeta < 1 - \gamma$. 
\end{remark}

Let us upper-bound $T_n^1$. As $x \mapsto 1/x^{\zeta+\gamma}$ is decreasing, we get 
\begin{equation*}
\begin{aligned}
T_n^1& = \exp \left( -2C_2(x, \alpha) \displaystyle\sum_{k=N_0+1}^n \frac{1}{k^{\zeta + \gamma}} \right) \\
& \leq \exp \left( -2C_2(x, \alpha) \int_{N_0+1}^{n+1} \frac{1}{t^{\zeta + \gamma}} dt \right)\\
& \leq \exp \left( -2C_2(x, \alpha) \frac{(n+1)^{1-\zeta-\gamma}-(N_0+1)^{1- \zeta - \gamma}}{(1- \zeta - \gamma)} \right)\;.\\
\end{aligned}
\end{equation*}
Then, $T_n^1$ (just like $T_n^0$) is exponentially small when $n$ grows up. To deal with the second term $T_n^2$ we first study the order in $n$ of $d_n$. $d_n$ is composed of three terms :
$$d_n \leq C_1 \exp \left( - \frac{3n^{1- \zeta}}{8} \right) +2\sqrt{C_1} M(x) C_3(d) n^{-\gamma + (\beta-1)(1+ \frac{1}{d})} +n^{-2\gamma + \beta-1}\;.$$

The first one is negligeable (exponentially decreasing). Let us compare the two others which are powers of $n$. Comparing their exponents, we get that there exists constants $C_5$ and $C_6(d)$ (their explicit form is given in the Appendix) such that

\begin{itemize}

\item[•] if $\beta \leq 1 - d\gamma$, then for $n \geq N_0+1$,
\[d_n \leq C_5(x, d)n^{-2\gamma+\beta-1}\;,\]

\item[•] if $\beta > 1 - d\gamma$, then for $n \geq N_0+1$,
\[d_n \leq C_6(x,d) n^{-\gamma+(1+ \frac{1}{d})(\beta-1)}\;.\]
\end{itemize}

\begin{remark}
Let us detail how one can find $C_5$ (it is the same reasoning for $C_6$). If $\beta \leq 1 - d \gamma$, we know that when $n$ will be big enough, the dominating term of $d_n$ will be the one in $n^{-2\gamma + \beta-1}$. Then, it is logical to search a constant $C_5(x, d)$ such that $\forall n \geq N_0+1$,

$$d_n \leq \frac{C_5(x, d)}{n^{2\gamma - \beta +1}}\;.$$
Such a constant has to satisfy, for all $n \geq N_0+1$, 

$$C_5(x, d) \geq C_1 \exp \left(-\frac{3}{8}n^{1-\zeta} \right)n^{2\gamma - \beta+1} + \frac{2\sqrt{C_1}M(x)C_3(d)}{n^{-\gamma + (1- \beta)/d}} +1\;.$$
Since $\beta \leq 1 - d \gamma$, the map $x \mapsto  \frac{2\sqrt{C_1}M(x)C_3(d)}{n^{-\gamma + (1- \beta)/d}}$ is positive and decreasing. Then its maximum is reached for $n=N_0+1$. Moreover, the map $x \mapsto C_1 \exp \left(-\frac{3}{8}n^{1-\zeta} \right)n^{2\gamma - \beta+1}$ is also positive and is decreasing on an $[A, + \infty[$. It also has a maximum. The previous inequality is then true for 

$$C_5(x, d) := \max_{n \geq N_0+1}C_1 \exp \left(-\frac{3}{8}n^{1-\zeta} \right)n^{2\gamma - \beta+1} +\frac{2\sqrt{C_1}M(x)C_3(d)}{(N_0+1)^{-\gamma + (1- \beta)/d}}+1\;.$$
\end{remark}

Let us study the two previous cases.

\vspace{0.3cm}
\textbf{Study of $T_n^2$ when $\beta > 1 - d\gamma$ :}
\vspace{0.3cm}

To upper-bound these sums, we use arguments from \cite{cenac}, which studies the stochastic algorithm to estimate the median on an Hilbert space. The main arguments are comparisons between sums and integrals. Indeed, for $ n \geq N_0+2$ and $n \geq N_3$ where $N_3$ is such that

$$\forall n \geq N_3, \, \, \lfloor \frac{n}{2} \rfloor \geq N_0+1\;,$$

\begin{small}
\begin{equation*}
\begin{aligned}
T_n^2 & = C_6(x,d)\displaystyle\sum_{k=N_0+1}^{n-1} \exp \left( -2C_2(x, \alpha) \displaystyle\sum_{j=k+1}^n \frac{a}{j^{\zeta+ \gamma}} \right) \frac{1}{k^{\gamma +\left( 1 + \frac{1}{d} \right)(1- \beta)}} + \frac{C_6(x, d)}{n^{\gamma + (1 + \frac{1}{d})(1- \beta)}} \\
& = C_6(x,d)\displaystyle\sum_{k=N_0+1}^{\lfloor \frac{n}{2} \rfloor } \exp \left( -2C_2(x, \alpha) \displaystyle\sum_{j=k+1}^n \frac{a}{j^{\zeta+ \gamma}} \right) \frac{1}{k^{\gamma +\left( 1 + \frac{1}{d} \right)(1- \beta)}} \\
& + C_6(x,d) \displaystyle\sum_{k=\lfloor \frac{n}{2} \rfloor +1}^{n-1} \exp \left( -2C_2(x, \alpha)\displaystyle\sum_{j=k+1}^{n} \frac{a}{j^{\zeta+ \gamma}} \right) \frac{1}{k^{\gamma +\left( 1 + \frac{1}{d} \right)(1- \beta)}} + \frac{C_6(x, d)}{n^{\gamma + (1 + \frac{1}{d})(1- \beta)}} \\
& =: S_1 + S_2 + S_3\;. \\
\end{aligned}
\end{equation*}
\end{small}
First, the function $x \mapsto x^{-\zeta - \gamma}$ is decreasing on $]0, + \infty[$ then

\begin{equation*}
\begin{aligned}
S_2 &\leq C_6(x,d) \displaystyle\sum_{k=\lfloor \frac{n}{2} \rfloor+1}^{n-1} \exp \left( -2C_2(x, \alpha) \int_{k+1}^{n+1} \frac{1}{x^{\zeta+ \gamma}} dx \right) \frac{1}{k^{\gamma + (1 + \frac{1}{d})(1- \beta)}}\\
&= C_6(x,d) \exp \left(-2C_2(x, \alpha)\frac{(n+1)^{1- \gamma - \zeta}}{1- \gamma - \zeta} \right) \\
& \hspace{4cm}  \displaystyle\sum_{k=\lfloor \frac{n}{2} \rfloor+1}^{n-1} \exp \left( -2C_2(x, \alpha) \frac{(k+1)^{1- \gamma - \zeta}}{1- \gamma - \zeta}\right)\frac{1}{k^{\gamma + (1 + \frac{1}{d})(1- \beta)}}\;.\\
\end{aligned}
\end{equation*}
Then, taking, $1- \beta< \zeta < \min((1- d\gamma), \left( 1 + \frac{1}{d} \right) (1- \beta)) $, we have since $k \geq \lfloor \frac{n}{2} \rfloor +1$

%\begin{small}
%\begin{equation*}
%\begin{aligned}
\begin{multline*}
S_2  \leq C_6(x,d) \exp \left(-2C_2(x, \alpha) \frac{(n+1)^{1- \gamma - \zeta}}{1- \gamma - \zeta} \right) \left( \frac{2}{n} \right)^{(1+ \frac{1}{d})(1- \beta)- \zeta}\\ \displaystyle\sum_{k=\lfloor \frac{n}{2} \rfloor+1}^{n-1} \exp \left( -2C_2(x, \alpha) \frac{(k+1)^{1- \gamma - \zeta}}{1- \gamma - \zeta}\right)\frac{1}{k^{\gamma +\zeta}}\;.
 \end{multline*}
%\end{aligned}
%\end{equation*}
%\end{small}
Now, since for $k \geq 1$, 

$$\left( \frac{1}{k} \right)^{\zeta+\gamma} \leq \left( \frac{2}{k+1} \right)^{\zeta+\gamma}\;,$$ we get
\begin{small}
\begin{equation*}
\begin{aligned}
S_2 & \leq C_6(x,d) \exp \left(-2C_2(x, \alpha) \frac{(n+1)^{1- \gamma - \zeta}}{1- \gamma - \zeta} \right) \left( \frac{2}{n} \right)^{(1+ \frac{1}{d})(1- \beta)- \zeta} 2^{\zeta+ \gamma}\\
& \hspace{4cm} \displaystyle\sum_{k=\lfloor \frac{n}{2} \rfloor+1}^{n-1} \exp \left( -2C_2(x, \alpha) \frac{(k+1)^{1- \gamma - \zeta}}{1- \gamma - \zeta}\right)\frac{1}{(k+1)^{\gamma + \zeta)}}\;.\\
\end{aligned}
\end{equation*}
\end{small}
Since the function $x \mapsto \exp \left( 2 C_2(x, \alpha) \frac{n^{1 - \zeta - \gamma}}{1- \zeta -\gamma} \right)$ is decreasing on $\left[\frac{2C_2(x, \alpha)}{\gamma+ \zeta}, + \infty \right[$, we also define the integer $N_1(x, \alpha)$ the rank such that

$$\forall n \geq N_1(x, \alpha), \, \, \lfloor \frac{n}{2} \rfloor + 1 \geq \frac{2C_2(x, \alpha)}{\zeta + \gamma}\;.$$
For $n \geq N_1(x, \alpha)$ we get

\begin{small}
\begin{equation*}
\begin{aligned}
S_2 & \leq C_6(x,d) \exp \left(-2C_2(x, \alpha) \frac{(n+1)^{1- \gamma - \zeta}}{1- \gamma - \zeta} \right)  \frac{2^{(1+ \frac{1}{d})(1- \beta) + \gamma}}{n^{(1+ \frac{1}{d})(1- \beta)- \zeta}} \\
& \times \displaystyle\sum_{k=\lfloor \frac{n}{2} \rfloor+1}^{n-1} \int_{\lfloor \frac{n}{2} \rfloor +2}^{n}\exp \left( -2C_2(x, \alpha) \frac{x^{1- \gamma - \zeta}}{1- \gamma - \zeta}\right)\frac{1}{x^{\gamma+ \zeta}}dx\\
&  \leq \frac{C_6(x,d)}{2C_2(x, \alpha)} \exp \left(-2C_2(x, \alpha) \frac{(n+1)^{1- \gamma - \zeta}}{1- \gamma - \zeta} \right) \frac{2^{(1+ \frac{1}{d})(1- \beta) + \gamma}}{n^{(1+ \frac{1}{d})(1- \beta)- \zeta}} \\
& \times \left[ \exp \left( 2C_2(x, \alpha) \frac{n^{1- \zeta- \gamma}}{1- \zeta - \gamma} \right) - \exp \left( 2C_2(x, \alpha) \frac{(\lfloor \frac{n}{2} \rfloor +2)^{1- \zeta- \gamma}}{1- \zeta - \gamma} \right)\right]\\
\leq & \frac{C_6(x,d)}{2C_2(x, \alpha)}\frac{2^{(1+ \frac{1}{d})(1- \beta) + \gamma}}{n^{(1+ \frac{1}{d})(1- \beta)- \zeta}}=: \frac{C_7(x, d, \alpha)}{2} \frac{1}{n^{-\zeta + (1+ \frac{1}{d})(1-\beta)}}\;.\\
\end{aligned}
\end{equation*}
\end{small}
Let us now deal with the term $S_1$. As $k \leq  \lfloor \frac{n}{2} \rfloor$, we have

\[\displaystyle\sum_{j=k+1}^{n} \frac{1}{j^{\zeta+\gamma}} \geq \frac{n}{2} \frac{1}{n^{\zeta+\gamma}}\;.\]
Then,
\begin{small}
\begin{equation*}
\begin{aligned}
S_1 & = C_6(x,d) \displaystyle\sum_{k=N_0+1}^{\lfloor \frac{n}{2} \rfloor} \exp \left( -2C_2(x, \alpha) \displaystyle\sum_{j=k+1}^n \frac{a}{j^{\zeta+ \gamma}} \right) \frac{1}{k^{\gamma + (1- \beta) \left( 1 + \frac{1}{d} \right)}} \\
& \leq C_6(x,d)\displaystyle\sum_{k=1}^{\lfloor \frac{n}{2} \rfloor} \exp \left( -C_2(x, \alpha)n^{1- \zeta - \gamma} \right) \frac{1}{k^{\gamma + (1- \beta) \left( 1 + \frac{1}{d} \right)}}\\
& \leq C_6(x,d) \exp \left( -C_2(x, \alpha) n^{1-\zeta- \gamma} \right) \displaystyle\sum_{k=1}^{\lfloor \frac{n}{2} \rfloor} \frac{1}{k^{\gamma + (1- \beta) \left( 1 + \frac{1}{d} \right)}}\;.\\
\end{aligned}
\end{equation*}
\end{small}
Thanks to the exponential term, $S_1$ is insignificant compared to $S_2$ whatever is the behaviour of the sum $\displaystyle\sum_k k^{-\gamma - (1- \beta) \left( 1 + \frac{1}{d} \right)}$, and so is $T_1^n$. Then, denoting $N_2(d, x)$ the rank after which we have

\[S_3+S_1+ T_n^1+T_n^0 \leq \frac{C_{7}(x, \alpha, d)}{2n^{(1+\frac{1}{d})(1-\beta)-\zeta}}\;,\]
we get, in the case where $\beta > 1 - \gamma$ and $1- \beta< \zeta < \min((1- \gamma), \left( 1+ \frac{1}{d} \right) (1- \beta)) $, for $n \geq \max\left(N_0, N_1(x,\alpha), N_2(d,x)\right)$

\begin{equation*}
\begin{aligned}
a_n(x) & \leq \frac{C_7(x, \alpha, d)}{n^{-\zeta + \left( 1+ \frac{1}{d} \right) (1- \beta)}}\;.\\
\end{aligned}
\end{equation*}

\vspace{0.3cm}

\textbf{Study of $T_n^2$ when $\beta \leq 1 - d\gamma$ :}

\vspace{0.3cm}

Using the same arguments, we conclude that for $1- \beta < \zeta < \min(1- \beta + \gamma, 1- \gamma)$ and $n \geq \max (N_0, N_1(x,\alpha),N_2(d, x))$ (see Appendix for precise definitions of these ranks), there exists a constant $C_{8}(x, \alpha, d)$ such that the mean squared error satisfies

\[a_n(x) \leq \frac{C_{8}(x, \alpha, d)}{n^{\gamma- \beta +1 -\zeta}}\;.\]
\subsection{Proof of Corollary \ref{doubleoptim} : choice of best parameters $\beta$ and $\gamma$}

Let us now optimize the rate of convergence obtained in previous theorem. When $\beta\geq \gamma$ and $\beta \leq 1-d\gamma$, the rate of convergence is of order $n^{-\gamma+\beta-1+\zeta}$. To optimize it, we have to choose $\zeta$ as small as possible. Then, we take $\zeta=1-\beta+\eta_{\zeta}$. The rate becomes $n^{-\gamma+\eta_{\zeta}}$. Then, we have also to choose $\gamma$ as small as possible. In this area, there is only one point in which $\gamma$ is the smallest, this is the point $(\gamma, \beta)=(\frac{1}{1+d}, \frac{1}{1+d})$. Since we have to take $\beta > \gamma$, the best couple of parameters, in this area, is $(\frac{1}{1+d}, \frac{1}{1+d} + \eta_{\beta})$. These parameters follow a rate of convergence of $n^{\frac{-1}{1+d} + \eta}$. 

When we are in the second area, the same kind of arguments allows us to conclude to the same optimal point with the same rate of convergence. 

In Figure \ref{fig:finale}, we use the numerical simulations of Section 3 to illustrate the previous discussion. 

\begin{figure}[ht!]
$$\includegraphics[scale=1]{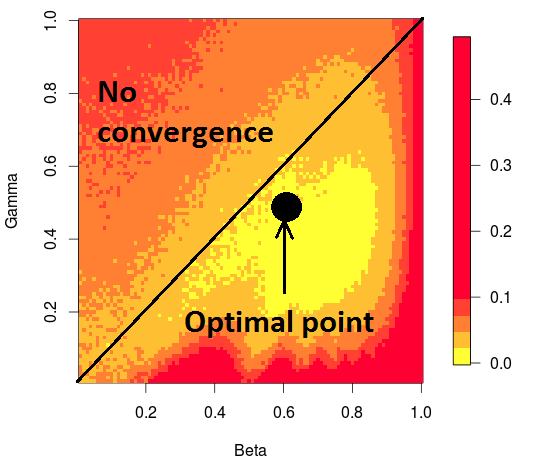}$$
\caption{Theoretical behaviour of the MSE in function of $\beta$ and $\gamma$, $n=200$.}\label{fig:finale}
\end{figure}

We have finally shown that

\[a_n(x) \leq \frac{C_{9}(x, \alpha, d)}{n^{\frac{1}{1+d} - \eta}}\; ,\]
where the constant is the minimal constant between $C_7(x, \alpha, d)$ and $C_8(x, \alpha, d)$ computed with optimal parameters $(\gamma, \beta, \zeta)$.

\newpage

\section{Recap of the constants}
\label{Apx2}
Let us sum up all the constants we need in this paper.

\subsection{Constants of the model}
We denote :
\begin{itemize}
\item[$\bullet$] $M(x)$ the constant of continuity of the model, that is

$$\forall B \in \mathcal{B}_x, \, \, \forall t \in \mathbb{R}, \, \, \left| F_{Y^ B}(t)-F_{Y^x}t) \right| \leq M(x) r_B\;.$$

\item[$\bullet$] $C_{input}$ is the positive lower bound of the density of the inputs law $f_X$. 

\item[$\bullet$] $C_g(x)$ is the positive lower bound of the density of the law of $g(x, \varepsilon)$. 
\end{itemize}

\subsection{Compact support}
We denote :

\begin{itemize}
\item[$\bullet$] $[L_Y, U_Y]$ the compact in which are included the values of $g$. 
\item[$\bullet$] $[L_X, U_X]$ the compact in which is included the support of the distribution of $X$. 
\item[$\bullet$] $[L_{\theta_n}, U_{\theta_n}]:=[L_Y-(1-\alpha), U_Y+\alpha]$ the segment in which $\theta_n$ can take its values ($\forall x$).
\item[$\bullet$] $U_{|.|}$ the upper bound of the compact support of the distribution of $||X-x||$ ($\forall x)$. 
\end{itemize}

\subsection{Real constants}

We denote :

\begin{itemize}
\item[$\bullet$] $\sqrt{C_1}:=U_Y + \alpha -L_Y .$ $C_1$ is the uniform in $\omega$ and $x$ bound of $\left(\theta_n(x) - \theta^*(x) \right)^2$.

\item[$\bullet$] $C_2(x, \alpha):= \min \left(C_g(x), \frac{1-\alpha}{U_Y+\alpha-L_Y}\right)$ is the constant such that 

$$\left[ F_{Y^x} \left( \theta_n(x) \right) - F_{Y^x} \left( \theta^*(x) \right) \right] \left[ \theta_n(x) - \theta^*(x) \right] \geq C_2(x, \alpha) \left( \theta_n(x) - \theta^*(x) \right)^2\;.$$

\item[$\bullet$] $C_3(d):=\sqrt[d]{2} \left( 1+\frac{8}{3d}+ \frac{1}{\sqrt[d]{C_{input}}C_4(d)} \right)$.
\item[$\bullet$] $C_4(d):=\frac{\pi^{\frac{d}{2}}}{\Gamma \left( \frac{d}{2} \right)+1}$.
\item[$\bullet$]$C_5(x, d):=\displaystyle\max_{n \geq N_0+1}C_1 \exp \left(-\frac{3}{8}n^{1-\zeta} \right)n^{2\gamma - \beta+1} +\frac{2\sqrt{C_1}M(x)C_3(d)}{(N_0+1)^{-\gamma + (1- \beta)/d}}+1$.
\item[$\bullet$] $\begin{aligned}
        C_6(x, d):= & \displaystyle\max_{n \geq N_0+1}C_1 \exp \left( - \frac{3}{8}n^{1- \zeta}\right) n^{\gamma +(1+ \frac{1}{d})(1-\beta)}\\ & + 2 \sqrt{C_1}M(x) C_3(d)+ \frac{1}{(N_0+1)^{\gamma - \frac{1}{d}(1-\beta)}}.
\end{aligned}$
\item[$\bullet$] \begin{small} $\begin{aligned}
        C_5^{optim}:=& \displaystyle\max_{n \geq N_0+1}C_1 \exp \left(-\frac{3}{8}n^{\left( \frac{1}{1+d}+\eta_{\beta}\right)-\eta_{\zeta}} \right)(N_0+1)^{\frac{1}{1+d}-\eta_{\beta}+1} \\ & +1 + \frac{1}{(N_0+1)^{-\frac{1}{1+d} + \frac{1}{d}(1- \frac{1}{1+d}-\eta_{\beta})}}
\end{aligned}$.
\end{small}

\item[$\bullet$] \begin{small} $\begin{aligned}
        C_6^{optim}(x, d):= & \displaystyle\max_{n \geq N_0+1}C_1 \exp \left( - \frac{3}{8}n^{\left( \frac{1}{1+d}+\eta_{\beta}\right)-\eta_{\zeta}}\right) n^{\left(1+\frac{1}{d}\right)-\frac{1}{d(1+d)}-\eta_{\beta}\left( 1+\frac{1}{d}\right)} \\ & + 2 \sqrt{C_1}M(x) C_3(d)+ \frac{1}{(N_0+1)^{-\frac{1}{d}+\frac{1}{d(1+d)}+ \frac{1}{1+d}+\frac{\eta_{\beta}}{d}}}
\end{aligned}$.
\end{small}

\item[$\bullet$] $C_7(x, \alpha, d):= \frac{2^{(1+ \frac{1}{d})(1-\beta) + \gamma}C_6(x,d)}{C_2(x, \alpha)}.$
\item[$\bullet$] $ C_8(x, \alpha):= \frac{2^{2\gamma - \beta+1}C_5(x, d)}{C_2(x, \alpha)}.$
\item[$\bullet$] $C_9(x, \alpha, d):= \min \left( \frac{2^{1+ \frac{1}{d} - \frac{1}{d(1+d)}-\eta_{\beta}(1+ \frac{1}{d})}C_5^{optim}(x, d)}{C_2(x, \alpha)}, \frac{2^{\frac{1}{1+d}-\eta_{\beta}+1}C_6^{optim}(x, d)}{C_2(x, \alpha)} \right).$
\item[$\bullet$] $C_{10}(d):= \sqrt[d]{\frac{2(k_n+1)}{(n+1) C_{input} C_4(d)}}.$
\end{itemize}

\subsection{Integer constants}

We denote :

\begin{itemize}
\item[$\bullet$] $N_0:= 2^{\frac{1}{\zeta-(1-\beta)}}.$
%\item[$\bullet$] $N_1(d, x)$ is the interger such that for $n \geq N_1(x,d)$,
%
%\begin{enumerate}
%\item[a)] If $\beta \leq 1-d \gamma$,
%
%$$ 2\sqrt{C_1}M(x)C_3(d)\gamma_{n+1} \left( \frac{k_n}{n}\right)^{1+\frac{1}{d}} + C_1 \exp\left( -\frac{3}{8}n^{1-\zeta} \right)  \leq n^{-2\gamma+\beta-1}.$$
%
%\item[b)] If $\beta > 1-d\gamma$,
%
%$$ n^{-2\gamma+\beta-1}+ C_1 \exp\left( -\frac{3}{8}n^{1-\zeta} \right)  \leq \sqrt{C_1} M(x) C_3(d) n^{-\gamma+\left(1+\frac{1}{d}\right)(\beta-1)}.$$
%
%
%\end{enumerate}
%
%\item[$\bullet$] $N_2$ is the rank such that $n \geq N_2$ implies
%
%$$\forall k \geq \left\lfloor \frac{n}{2} \right\rfloor, \, \, \frac{1}{k^{\zeta+\gamma}} \leq \left(\frac{2}{k+1}\right)^{\zeta+\gamma}.$$

\item[$\bullet$] $N_1(x, \alpha)$ is the rank such that $n \geq N_1(x, \alpha)$ implies

$$\lfloor \frac{n}{2} \rfloor + 1 \geq \frac{2C_2(x, \alpha)}{\zeta + \gamma}.$$

\item[$\bullet$] $N_2(x, \alpha, d)$ is the integer such that $\forall n \geq N_2(x, \alpha, d)$, 

\begin{enumerate}
\item[a)] If $\beta \leq 1-d\gamma$,

$$ S_3+S_1 + T_n^1 +T_n^0 \leq \frac{C_7(x, \alpha, d)}{2n^{\left(1+\frac{1}{d}\right)(1-\beta)-\zeta}}\;,$$

where $T_n^1:=\exp \left( -2C_2(x, \alpha) \displaystyle\sum_{k=N_0+1}^{n} k^{-\gamma-\zeta}\right) $, $T_n^0:=C_1 \exp \left( \frac{-3n^{1-\zeta}}{8}\right)$, 
\newline
$S_3:=\frac{C_6(x, d)}{n^{\gamma + (1+ \frac{1}{d})(1- \beta)}}$ and \\ $S_1:= C_6(x, d) \exp(-2C_2(x, \alpha)n^{1-\zeta-\gamma}) \displaystyle\sum_{k=1}^{\lfloor \frac{n}{2} \rfloor} k^{-\gamma-(1-\beta)(1+1/d)}$. 

\item[b)] If $\beta >1-d\gamma$,

$$S_3+ S_1+ T_n^1 +T_n^0 \leq \frac{C_8(x, \alpha, d)}{2n^{\gamma-\beta+1-\zeta}}\;,$$

where $T_n^1:=\exp \left( -2C_2(x, \alpha) \displaystyle\sum_{k=N_0+1}^{n} k^{-\gamma-\zeta}\right) $, $T_n^0:=C_1 \exp \left( \frac{-3n^{1-\zeta}}{8}\right)$, 
\newline

$S_3:=\frac{C_5}{n^{2\gamma - \beta+1)}}$ and $S_1:= C_5 \exp(-2C_2(x, \alpha)n^{1-\zeta-\gamma}) \displaystyle\sum_{k=1}^{\lfloor \frac{n}{2} \rfloor} k^{-\gamma-(1-\beta)(1+1/d)}$. 
\end{enumerate}

\item[$\bullet$] $N_3$ is the rank such that $\forall n \geq N_3, \, \, \lfloor \frac{n}{2} \rfloor \geq N_0+1.$

\item[$\bullet$] $N_4(x, \alpha, d) :=\max\left(N_0+2, N_1(x, \alpha), N_2(x, \alpha, d), N_3\right)$.

\end{itemize}

%%%%%%%%%%%%%%%%%%%%%%%%%%%%%%%%%
% For one-column wide figures use
%\begin{figure}
%% Use the relevant command to insert your figure file.
%% For example, with the graphicx package use
% \includegraphics{}
%% figure caption is below the figure
%\caption{Please write your figure caption here}
%\label{fig:1}       % Give a unique label
%\end{figure}
%%
%% For two-column wide figures use
%\begin{figure*}
%% Use the relevant command to insert your figure file.
%% For example, with the graphicx package use
%  \includegraphics[width=0.75\textwidth]{example.eps}
%% figure caption is below the figure
%\caption{Please write your figure caption here}
%\label{fig:2}       % Give a unique label
%\end{figure*}
%
% For tables use
%\begin{table}
%% table caption is above the table
%\caption{Please write your table caption here}
%\label{tab:1}       % Give a unique label
%% For LaTeX tables use
%\begin{tabular}{lll}
%\hline\noalign{\smallskip}
%first & second & third  \\
%\noalign{\smallskip}\hline\noalign{\smallskip}
%number & number & number \\
%number & number & number \\
%\noalign{\smallskip}\hline
%\end{tabular}
%\end{table}

%\begin{acknowledgements}
%If you'd like to thank anyone, place your comments here
%and remove the percent signs.
%\end{acknowledgements}

% BibTeX users please use one of
%\bibliographystyle{spbasic}      % basic style, author-year citations
\bibliographystyle{spmpsci}      % mathematics and physical sciences
\bibliography{bib1.bib}   % name your BibTeX data base

%% Non-BibTeX users please use
%\begin{thebibliography}{}
%%
%% and use \bibitem to create references. Consult the Instructions
%% for authors for reference list style.
%%
%\bibitem{RefJ}
%% Format for Journal Reference
%Author, Article title, Journal, Volume, page numbers (year)
%% Format for books
%\bibitem{RefB}
%Author, Book title, page numbers. Publisher, place (year)
%% etc
%\end{thebibliography}

\end{document}